\newtheorem{theorem}{Theorem}
\newtheorem{definition}[theorem]{Definition}
\newtheorem{lemma}[theorem]{Lemma}
\newtheorem{proposition}[theorem]{Proposition}
\newtheorem{remark}[theorem]{Remark}
\title{{\em Local} Algebraic $K$-theory}
\author{Nicolae Teleman \\
Dipartimento di Scienze Matematiche, Universita' Politecnica delle Marche \\
E-mail:  teleman@dipmat.univpm.it}
\date{}                          
\begin{document}
\maketitle

\section{abstract}  
In this article we address the first part of the programme presented in \cite{Teleman_arXiv_III}, \S 2; we construct the {\em local} $K$-{\em theory level}  of the index formula. 
\par
Our construction is sufficiently general to encompass the algebra of pseudo-differential operators of order zero on smooth manifolds, elliptic  pseudo-differential operators of order zero,  their {\em abstract symbol} (see Introduction \S 2. ) and their {\em local} $K$-{\em theory analytical and topological index classes}, see  \cite{Teleman_arXiv_III}, \S 5, Definition 5 and 6. Our definitions are sufficiently general to apply to exact sequences of singular integral operators, which are of interest in the case of the index theorem on 
Lipschitz and quasi-conformal manifolds, see \cite{Teleman_IHES}, \cite{Teleman_Acta}, \cite{Donaldson_Sullivan}, 
\cite{Connes_Sullivan_Teleman}.
\par
In this article we introduce {\em localised algebras} (Definition 3)  $\mathit{A}$ and in \S 6 we define their {\em local} algebraic $K$-theory. 
\par
 A localised algebra $\mathit{A}$ is an algebra in which a decreasing filtration by vector sub-spaces $\mathit{A}_{\mu}$ is introduced. The filtration
 $\mathit{A}_{\mu}$ induces a filtration on the space of matrices $\mathbb{M}(\mathit{A}_{\mu})$.
\par
Although we define solely $K^{loc}_{\ast}(\mathit{A})$ for $\ast= 0, \; 1$, we expect our construction could be extended in higher degrees.  
\par
{\em We stress that our construction of} $K^{loc}_{0}(\mathit{A})$ {\em uses exclusively idempotent matrices  and that the use of finite projective modules is totally avoided. 
(Idempotent matrices, rather than projective modules, contain less arbitrariness in the description 
of the $K_{0}$ classes and allow a better filtration control).
\par
{\em The group}  $K^{loc}_{0}(\mathit{A})$ {\em is by definition the quotient space of the space of the Grothendieck completion of the space of idempotent matrices through three equivalence relations: -i) stabilisation $\sim_{s}$, -2) {\em local} conjugation} $\sim_{l}$,  {\em and} -3) {\em projective limit with respect to the filtration}.  
\par
{\em By definition, the} $K_{1}^{loc} (\mathit{A})$ {\em is the projective limit of the}  {\em local} $K_{1}(\mathit{A}_{\mu})$ {\em groups. The group}
$K_{1}(\mathit{A}_{\mu})$ {\em is by definition the quotient of } $\mathbb{GL}(\mathit{A}_{\mu})$ {\em modulo the equivalence relation
generated by: -1) stabilisation $\sim_{s}$,  --2) {\em local} conjugation $\sim_{l}$ and -3)} $\sim_{\mathbb{O}(\mathit{A}_{\mu})}$, {\em where} $\mathbb{O}(\mathit{A}_{\mu})$  {\em is the sub-module generated by elements of the form} $ u \oplus u^{-1} $,  {\em for any} $u \in \mathbb{GL}(\mathit{A}_{\mu})$.  The class of any invertible element $u$ modulo conjugation (inner auto-morphisms) we call the {\em Jordan canonical form of} $u$. 
The {\em local conjugation} preserves the {\em local Jordan canonical form 
of invertible elements}. The equivalence relation $\sim_{\mathbb{O}(\mathit{A}_{\mu})}$ insures existence of opposite elements in $K_{1}(\mathit{A}_{\mu})$ and  $K_{1}^{loc}(\mathit{A})$.
\par
{\em Our definition of  $K^{loc}_{1}(\mathit{A})$  does not use the commutator sub-group} $[\mathbb{GL}(\mathit{A}), \mathbb{GL}(\mathit{A})]$ {\em nor elementary matrices in its construction}. 
\par
We define short exact sequences of  {\em localised} algebras.  To get the corresponding (open) six terms exact sequence (Theorem 51) one has to take the tensor product of the expected six terms exact sequence by $\mathbb{Z}[\frac{1}{2}]$.   We expect the factor $\mathbb{Z}[\frac{1}{2}]$ to have important consequences.
\par
Our work shows that the basic relations (onto the stably inner-automorhism classes of invertible elements) which define $K_{1}$ reside in the {\em additive} sub-group generated by elements of the form $u \oplus u^{-1}$,   $u \in \mathbb{GL}(\mathit{A})$,  rather than in the {\em multiplicativ} commutator sub-group $[\mathbb{GL}(\mathit{A}), \mathbb{GL}(\mathit{A})]$.   
\par
Even into the case of trivially filtered algebras,  $\mathit{A}_{\mu} = \mathit{A}$,  for all $\mu \in \mathbb{N}$, the introduced group $K^{loc}_{1}(\mathit{A})$ should provide more information than the classical group $K_{1}(\mathit{A})$.
\section{Introduction}    


To motivate the next considerations, we make reference to algebras of pseudo-differential operators on smooth manifolds, having in mind the index formula. The index formula is a global statement whose ingredients  may be computed by {\em local} data.  Our leading idea is to {\em localise} $K$-theory and periodic cyclic homology along the lines of the Alexander-Spanier co-homology in such a way that the new tools operate naturally with the Alexander-Spanier co-homology, see \cite{Teleman_arXiv_III}.
\par
This article deals with the first part of our programme. In this article we define {\em localised} algebras $\mathit{A}$ and we define and study their {\em local} $K$-theory, $K^{loc}_{i}(\mathit{A})$, $i = 0, 1$. The main result of this article is the (open)  six terms exact sequence Theorem 51 associated with short exact sequences of {\em local} algebras.
\par
To facilitate the understanding of this paper, we say from the very beginning that we intend to formalise the algebraic phenomena behind the following short exact sequence of operator algebras
\begin{equation}  
0 \longrightarrow \Psi_{-1}  \overset{\iota}{\longrightarrow}  \Psi_{0} \overset{\pi}{\longrightarrow}
\Psi_{0}/\Psi_{-1}  \longrightarrow 0,
\end{equation}  
where $\Psi_{0}$ is the algebra of pseudo-differential operators whose pseudo-differential symbol is homogeneous of order $\leq 0$  and $\Psi_{-1}$ is the bi-lateral ideal of pseudo-differential operators whose symbol is homogeneous order $\leq -1$,  on the smooth manifold $M$, see \cite{Atiyah_Singer_I}, \S 5. 
\par
Any such operator $A$ has a Schwartz distributional kernel on $M\times M$. This is a singular integral operator. Cutting its distributional kernel by a smooth function $\lambda$, which has small support about the diagonal in $M\times M$ and is identically $1$ on a small neighbourhood of the diagonal, does not modify its symbol $\sigma(A) $ and modifies the original operator by a smoothing operator, i.e.  by an pseudo-differential operator of order $-\infty$; smoothing operators are contained in the ideal $\Psi_{-1}$. In other words, any pseudo-differential operator of order zero has, modulo operators of order $-\infty$, a representative with the same symbol and whose {\em support is arbitrarily small}. The image through $\pi$ of  a pseudo-differential operators $A$ into $ \Psi_{0}/\Psi_{-1}$ is called the {\em abstract symbol} of the operator $A$ and denoted by $\sigma(A)$; the passage from $A$ to its abstract symbol $\sigma(A)$ is a ring homomorphism
\begin{equation}  
\pi: \Psi_{0}  \overset{\pi }{\longrightarrow} \Psi_{0}/\Psi_{-1}.
\end{equation}  
\par
We may introduce a Riemannian metric on $M$ and measure the size of the supports of pseudo-differential (integral) operators on $M$. They satisfy the property 
\begin{equation}  
 \mathbb{I}nt (r) \circ \mathbb{I}nt (s) \subset \mathbb{I}nt (r+s),
\end{equation}  
where $\mathbb{I}nt (r)$  denotes integral operators with support in an $r$-neighbourhood of the diagonal in $M \times M$.
\par
These entitle us to speak about the support of pseudo-differential operators. The support consideration leads to the possibility to introduce a filtration on the algebra of pseudo-differential operators. 
The property (3) justifies Definition 3 of {\em localised} algebras.  
\par
As said before, in this paper we define {\em local} algebraic $K$-theory. Here, for the index formula sake, we resume ourselves to define  $K^{loc}_{i}$, $i= 0,1$, although, we  do not exclude the extension of the constructions beyond these degrees.  
\par
Next, we discuss our construction of {\em local} $K$ groups, $K^{loc}_{i}(\mathit{A})$, $i=0,1$,  for {\em localised} algebras $\mathit{A}$.  
\par 
With regard to the construction of  $K_{0}$-theory groups, we mention that one or more of the following structures are used in the literature (in the pure algebraic context,  Banach algebras or $C^{\ast}$-algebras), see  \cite{Milnor}, \cite{Karoubi},  \cite{Rosenberg}
 \cite{Rordam_Lansen_Lautsen},  \cite{Blackadar},   \cite{Weibel}: finite projective modules, idempotents and various equivalence relations: Murray-von Neumann equivalence, unitary equivalence,  continuous homotopy equivalence, elementary matrix operations.
 \par
 In the literature $K_{0}(\mathit{A})$ is built on {\em finite projective modules} $P$ over the algebra $\mathit{A}$ or {\em idempotents} 
$p$ belonging to the matrix algebra $\mathbb{M}_{n}(\mathit{A})$. The two descriptions are equivalent, but it is important to acknowledge the difference between them.
\par
Our construction of $K^{loc}_{0}$ uses exclusively idempotent matrices.  There are a few reasons why we chose to avoid projective modules: -i) projective modules, in comparison with idempotent matrices,  contain more arbitrariness in describing $K_{0}$ classes and -ii) matrices are more suitable for controlling the algebra filtration data. Matrices are more prone to make calculations.
\par
No reference to projective modules is used in our constructions. 
\par
 Regarding our construction of $K^{loc}_{1}$ we recall that  the classical algebraic $K$-theory group $K_{1}(\mathit{A})$ of the unitary algebra $\mathit{A}$, see   \cite{Whitehead}, \cite{Bass}, \cite{Milnor}, \cite{Karoubi},
 is by definition the Whitehead group  
\begin{equation*}
K_{1}(\mathit{A}) := \mathbb{GL}(\mathit{A})/ [\mathbb{GL}(\mathit{A}), \mathbb{GL}(\mathit{A})],
\end{equation*}
 where
$[\mathbb{GL}(\mathit{A}), \mathbb{GL}(\mathit{A})]$ is the commutator normal sub-group of the group of invertible matrices
$\mathbb{GL}(\mathit{A})$.
Our definition of {\em local} $K$-theory groups needs to keep track of the {\em number of multiplications} performed inside the algebra $\mathit{A}$. In order for our constructions to hold it is necessary to use constructions which use a {\em bounded}  number of multiplications. Unfortunately, in general, the number of multiplications needed to generate the whole
commutator sub-group is not bounded. It is known that the commutator sub-group is also generated by the {\em elementary} matrices.
This is the reason why our definition of $K^{loc}_{1}(\mathit{A})$  avoids entirely factorising $\mathbb{GL}(\mathit{A})$ through the commutator sub-group or the sub-group generated by elementary matrices. 
\par
$K_{0}^{loc}(\mathit{A})$, {\bf resp}. $K_{0}^{loc}(\mathit{A})$, is by definition the Grothendieck completion of the semi-group of idempotent matrices in $\mathbb{M}_{n}(\mathit{A}_{\mu})$ modulo three equivalence relations:  -i) stabilisation, -ii) local conjugation $\sim_{l}$ by invertible elements $u \in  \mathbb{GL}_{n}(\mathit{A}_{\mu})$ and -iii) projective limits with respect to $\mu \in \mathbb{N}$ (Alexander-Spanier type limit).
\par
To understand the relation between our definition of the group $K_{1}^{loc}$  and $K_{1}$, recall first the definition of  the commutator sub-group $[\mathbb{GL}(\mathit{A}, \mathbb{GL}(\mathit{A} ]$; it is generated by all multiplicative commutators
$$
[A, B] := ABA^{-1}B^{-1},  \quad for \; any \; A, B \in \mathbb{GL}_{n}(\mathit{A}).
$$ 
Supposing that $A$ and $B$ are conjugated, i.e.  $A = U B U^{-1}$, and we write  $A \sim B$,  we have
$$
A = U B U^{-1} = U B U^{-1} B^{-1} B = [ U, B ] B. 
$$ 
This shows that if $A$ and $B$ are conjugated, they differ, multiplicatively, by a commutator. To complete this remark, we say that
$A$ and $B$ are locally conjugated and write $A \sim_{l} B$ provided $A, B$ and $U$ belong to some $\mathbb{GL}_{n}(\mathit{A}_{\mu})$; here, $\mathit{A}_{\mu}$ denote the terms of the filtration of $\mathit{A}$.
\par
It is important to note that in the particular case $\mathit{A} = \mathbb{C}$ 
the quotient of $\mathbb{GL}(\mathbb{M}(\mathbb{C}))$
through the commutator sub-group  gives much less information
than $K_{1}^{loc}(\mathbb{M}(\mathbb{C}))$, which is obtained as the quotient of the same space through the action of the inner auto-morphism group; in this last case one obtains a quotient of the set of conjugacy classes, i.e.  the space of Jordan canonical forms of invertible matrices, modulo permutations of Jordan cells. 
\par
The main result of this article is the open the six terms exact sequence associated to  short exact sequence, Theorem 51. We note, however, that given the higher amount of information kept by the {\em local algebraic $K_{\ast}$ groups} we are defining here,
exactness of the six terms sequence requires tensorising the expected six terms sequence by $\mathbb{Z}[\frac{1}{2}]$. This phenomenon is related to the difficulty of lifting elements of the form $u \oplus u^{-1}$ over the quotient algebra $\mathit{A}/\mathit{A}^{'}$ to elements of the same form over the algebra $\mathit{A}$. The presence of the factor $\mathbb{Z}[\frac{1}{2}]$ should have important consequences.


\section{Generalities and Notation.}  
Let $\mathit{A}$ be a complex algebra, with or without unit. If the unit will be needed, the unit will be adjoined.
\par
\begin{definition}  
Given the algebra $\mathit{A}$ we denote by $\mathbb{M}_{n}(\mathit{A})$ the space of $n \times n$ matrices with entries in 
$\mathit{A}$.  $\mathbb{M}_{n}(\mathit{A})$ is a bi-lateral $\mathit{A}$-module.
\par
Let  $\mathbb{I}demp_{n} \subset \mathbb{M}_{n}(\mathit{A})$ be the subset of idempotents $p$ ($p^{2} = p$) of size $n$ with entries in
$\mathit{A}$.
\par
Suppose $\mathit{A}$ has a unit. We denote by $\mathbb{GL}_{n}(\mathit{A})$ the sub-space of matrices $M$ of size $n$, with entries in $\mathit{A}$ which are invertible,  i.e. there exists the matrix $M^{-1} \in \mathbb{M}_{n}(\mathit{A})$ such that
$M M^{-1} = M^{-1} M = 1$. $\mathbb{GL}_{n}(\mathit{A})$ is a non-commutative group under multiplication.
\end{definition}     
\begin{definition}  
The inclusions 
\par
-i) $\mathbb{I}demp_{n}(\mathit{A}) \longrightarrow \mathbb{I}demp_{n+1}(\mathit{A}) $
\begin{equation}   
p \mapsto 
\begin{pmatrix}
p  & 0 \\
0  &  0
\end{pmatrix}
\end{equation}  
\par
-ii) $\mathbb{GL}_{n}(\mathit{A}) \longrightarrow \mathbb{GL}_{n+1}(\mathit{A}) $
\begin{equation}  
M \mapsto 
\begin{pmatrix}
M  & 0 \\
0  &  1
\end{pmatrix}
\end{equation}  
are called {\em stabilisations.}
\par
Stabilisations define two direct systems with respect to $n \in \mathbb{N}$.
\end{definition}  


\section{{\em Localised} Algebras}   
\begin{definition}      
 Localised Algebras.
\par
Let $\mathit{A}$ be an unital complex associative algebra with unit $1$.
The algebra $\mathit{A}$ is called {\em localised algebra} provided it is endowed with an additional structure satisfying the axioms (1) - (4) below.
\par  
Axiom 1. The underlying vector space $\mathit{A}$ has a decreasing filtration by vector sub-spaces
$\{ \mathit{A}_{\mu}   \}_{n \in \mathbb{N}} \subset \mathit{A}$.
\par 
Axiom 2. 
$\cup_{\mu \in \mathbb{N}} \; \mathit{A}_{\mu} \; = \mathit{A}$
\par  
Axiom 3.
$\mathbb{C}. 1 \subset \mathit{A}_{\mu}, \; for\;  any \; \; \mu \in \mathbb{N}$
\par 
Axiom 4.
 For any $\mu, \; \mu' \in \mathbb{N}^{+}$,   \;  $ \mathit{A}_{\mu} \; . \; \mathit{A}_{\mu'}   \subset \mathit{A}_{\mathit{Min}(\mu,\mu')-1}$ 
 ($ \mathit{A}_{0} \; . \; \mathit{A}_{0} \subset   \mathit{A}_{0}$).
\end{definition}  

\begin{remark}  
 An algebra $\mathit{A}$ could have different localisations.
\end{remark}  
\begin{definition}    Homomorphisms of localised algebras.   Induced homomorphism. 
-i) A homomorphism   from the localised algebra  $ \mathit{A} = \{ \mathit{A}_{n}   \}_{n \in \mathbb{N}}$ to the localised algebra  $ \mathit{B} = \{ \mathit{B}_{\mu}   \}_{\mu \in \mathbb{N}}$ is an algebra homomorphism  
$\phi:   \mathit{A} \longrightarrow  \mathit{B}$  such that
$ \phi: \mathit{A}_{n}    \longrightarrow \mathit{B}_{\mu}$, for any $\mu \in \mathbb{N}$. 
\par
-ii) Let $f:  \mathit{A} \longrightarrow  \mathit{B}$  be a localised algebra homomorphism.
\par
Let $f_{\ast}:  \mathbb{M}_{n}(\mathit{A}_{\mu}) \longrightarrow  \mathbb{M}_{n}(\mathit{B}_{\mu}) $ be the induced
homomorphism which replaces any component $a_{ij}$ of the matrix $M$ with the component
$f(a_{ij})$ of the matrix  $f_{\ast} (M)$. 
\end{definition}    
\begin{remark}   
The notion of localised algebra differs from the notion of m-algebras defined by Cuntz \cite{Cuntz}, in many respects. The sub-spaces $ \mathit{A}_{n}$ are not required to be algebras, or even more, topological algebras. In the Cuntz' definition of localised Banach algebras, the projective limit of these sub-algebras might be the zero Banach algebra. However, even in these cases, the corresponding {\em local} $K$-theory could not be trivial.
\par
\end{remark}   
\par
\begin{remark}  
The immediate application of this theory regards pseudo-differential operators.
The pseudo-differential operators of non-positive order on a compact smooth manifold form a localised (Banach) algebra. The filtration  
is defined in terms of the support of the operators; the bigger the filtration order is, the smaller the supports of the operators are towards the diagonal.
\end{remark}  


\section{Mayer-Vietoris Diagrams.}   
In this section we {\em adapt} Milnor's \cite{Milnor} construction of the first two algebraic $K$-theory groups {\em to the case of localised rings}.
\par
Let  $\Lambda, \Lambda_{1}, \Lambda_{2}$ and $\Lambda'$ be rings with unit $1$ and let
\begin{equation}   
\begin{CD}
\Lambda            @>i_{1}>>   \Lambda_{1}\\
@VV{i_{2}} V                 @VV{j_{1}}V\\
\Lambda_{2}     @>j_{2}>>    \Lambda'
\end{CD}
\end{equation}   
be a commutative diagram of ring homomrphisms. All ring homomorphisms $f$ are assumed to satisfy  $f(1) = 1.$ 
Any module in this paper is a left module.
\par
We assume the diagram satisfies the three conditions below.
\par
$\mathit{Hypothesis}$ 1.
All rings and ring homomorphisms are localised, see  \S 3.
\par
$\mathit{Hypothesis}$ 2. $\Lambda$ is a {\em localised product} of $\Lambda_{1}$ and $\Lambda_{2}$, i.e. for any pair of elements
$\lambda_{1} \in \Lambda_{1, \mu}$  and  $\lambda_{2} \in \Lambda_{2, \mu}$ such that $j_{1}(\lambda_{1, \mu}) = j_{2}(\lambda_{2, \mu}) = \lambda'  \in \Lambda'_{\mu}$, there exists
only one element $\lambda_{n} \in \Lambda_{\mu}$ such that  $i_{1}(\lambda_{\mu}) = \lambda_{1, \mu}$ and 
$i_{2}(\lambda_{\mu}) = \lambda_{2, \mu}$.
\par
The ring structure in $\Lambda$ is defined by 
\begin{equation}   
(\lambda_{1}, \lambda_{2}) + (\lambda_{1}', \lambda_{2}') := (\lambda_{1}+ \lambda_{1}' \;,\; \lambda_{2} + \lambda_{2}'), 
\hspace{0.5cm}
(\lambda_{1}, \lambda_{2}) . (\lambda_{1}', \lambda_{2}') := (\lambda_{1} . \lambda_{1}' \;,\; \lambda_{2} . \lambda_{2}'), 
\end{equation}  
i.e. the ring operations in $\Lambda$ are performed component-wise. In \S    and \S we will need to operate with matrices with entries in
$\Lambda$. 
\par
$\mathit{Hypothesis}$ 3. At least one of the homomorphisms $j_{1}$  and $j_{2}$ is surjective.
\par
$\mathit{Hypothesis}$ 4. ({\em Optional})  For applications to the Index Theorem, see \cite{Teleman_arXiv_IV}, as the symbol is already local, we are allowed to assume, if needed,  that $ \Lambda^{'}$ is {\em localised trivially}, i.e. that  $\Lambda^{'}_{\mu} = \Lambda^{'}$ for any $\mu \in \mathbb{N}$. Hypothesis 4 is not used in this article.
\par
\begin{remark}   
-i) Any matrix $M \in \mathbb{M}_{n}(\Lambda)$ consists of a pair of matrices 
$(M_{1}, M_{2}) \in \mathbb{M}_{n}(\Lambda_{1}) \times \mathbb{M}_{n}(\Lambda_{2}) $ subject to the condition
$j_{1, \ast} M_{1} = j_{2, \ast} M_{2}$.   
Any matrix $M \in \mathbb{M}_{n}(\Lambda)$ is called {\em double} matrix.
 \par
 -ii) if  $(M_{1},  M_{2})$,   $(N_{1},  N_{2})$ are double matrices, ({\bf resp.} belong to $\mathbb{M}(\Lambda_{1})  \times 
 \mathbb{M}(\Lambda_{2})$)
 and $(\lambda_{1}, \lambda_{2}) \in   \Lambda$, ({\bf resp.}  $(\lambda_{1}, \lambda_{2})  \in  \Lambda_{1} \times \Lambda_{2}$ )  
 then  relations (7) induce onto the space of double matrices, ({\bf resp.} the space $\mathbb{M}(\Lambda_{1})  \times 
 \mathbb{M}(\Lambda_{2})$)
 the following relations
 $$
(\lambda_{1}, \lambda_{2}) \; (M_{1},  M_{2}) =   (\lambda_{1} \; M_{1},  \lambda_{2} \; M_{2})
 $$
 $$
(M_{1},  M_{2}) \; +  \; (N_{1},  N_{2})  =   (M_{1} + N_{1}, \; M_{2} + N_{2})
 $$
$$
(M_{1},  M_{2}) \; .  \; (N_{1},  N_{2})  =   (M_{1} \; .  \;N_{1}, \; M_{2} \;.  \;N_{2})
 $$
 \end{remark}   
\begin{definition} 
A commutative diagram satisfying Hypotheses 1. 2. 3. will be called {\em localised Mayer-Vietoris diagram}.
\end{definition}   

\section{$K_{0}^{loc}(\mathit{A}_{\mu})$ and  $K_{0}^{loc}(\mathit{A})$ .} 
\begin{definition}  
 We assume the algebra  $\mathit{A}$ is localised, see \S 4.
 \par
We consider the space of  {\em matrices with entries in}  $\mathit{A}_{\mu}$ and we  denote it by
 $\mathbb{M}_{n}(\mathit{A}_{\mu})$.
\par
Let $ \mathbb{I}demp_{n}(\mathit{A}_{\mu}) $  
denote the space of idempotent matrices of size $n$ with entries in $\mathit{A}_{\mu}$.
\par
Let $ \mathbb{GL}_{n}(\mathit{A}_{\mu}) $  
denote the space of invertible matrices $M$ of size $n$ with the property that the entries of both $M$ and $M^{-1}$ belong to
$\mathit{A}_{\mu}$. 
\par 
Let  $\mathbb{I}demp  (\mathit{A}_{\mu}) := \injlim_{n} \mathbb{I}demp_{n} (\mathit{A}_{\mu})$. 
\par
Let $\mathbb{GL}  (\mathit{A}_{\mu}) := \injlim_{n} \mathbb{GL}_{n} (\mathit{A}_{\mu})$. 
\end{definition}   
\par 

\begin{definition}  
-i) Two matrices $s, \; t \in \mathbb{M}_{n}(\mathit{A})$ will be called {\em conjugated} and we write $s \sim t$ provided there exists $u \in \mathbb{GL}_{n}(\mathit{A})$ such that  $s = u t u^{-1}$.
\par
-ii) Two matrices $s, \; t \in \mathbb{M}_{n}(\mathit{A}_{\mu})$ will be called {\em locally conjugated} and we write $s \sim_{l} t$ provided there exists $u \in \mathbb{GL}_{n}(\mathit{A}_{\mu})$ such that  $s = u t u^{-1}$.
\par
In particular, 
\par
-ii.1)
two idempotents $p, \; q \; \in \mathbb{I}demp_{n}(\mathit{A}_{\mu})$ are  {\em locally isomorphic} and we write $p \sim_{l} q$
 provided there exists $ u \in \mathbb{GL}_{n}(\mathit{A}_{\mu})$ such that   $q = u \; p \; u^{-1}$ and
 \par
 -ii.2)
 two invertible matrices $s, \; t \in \mathbb{GL}_{n}(\mathit{A}_{\mu})$ are {\em locally conjugated} and we write $s \sim_{l} t $ provided there exists $u \in \mathbb{GL}_{n}(\mathit{A}_{\mu})$ such that  $s = u t u^{-1}$.
\end{definition}  

\begin{proposition}   
 $\mathit{Idemp}_{n}(\mathit{A}_{\mu})$ and  $\mathbb{GL}_{n}(\mathit{A}_{\mu})$
 are semigroups with respect to the direct sum
 \begin{gather}
 A + B :=
 \begin{pmatrix}
 A & 0\\
 0 & B
 \end{pmatrix}
 \end{gather}
 \end{proposition}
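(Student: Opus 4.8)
The plan is to verify directly that the direct sum operation $+$ defined by the block-diagonal formula is a well-defined, associative (up to the conventions already in force) binary operation on each of the two sets in question, and that it respects the filtration index $\mu$. First I would check closure. If $p, q \in \mathbb{I}demp_{n}(\mathit{A}_{\mu})$, then all entries of $p$ and $q$ lie in $\mathit{A}_{\mu}$, hence so do all entries of the block matrix $\begin{pmatrix} p & 0 \\ 0 & q \end{pmatrix}$, since $0 \in \mathit{A}_{\mu}$ by Axiom 1 (the $\mathit{A}_{\mu}$ are vector subspaces). A block computation gives $\begin{pmatrix} p & 0 \\ 0 & q \end{pmatrix}^{2} = \begin{pmatrix} p^{2} & 0 \\ 0 & q^{2} \end{pmatrix} = \begin{pmatrix} p & 0 \\ 0 & q \end{pmatrix}$, so $p + q$ is again idempotent; it has size $2n$, so after the stabilisation inclusions of Definition 8 (or, more precisely, working in $\mathbb{I}demp(\mathit{A}_{\mu}) = \injlim_{n}\mathbb{I}demp_{n}(\mathit{A}_{\mu})$) the sum lands in the same direct system. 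Similarly, if $M, N \in \mathbb{GL}_{n}(\mathit{A}_{\mu})$ with inverses $M^{-1}, N^{-1}$ having entries in $\mathit{A}_{\mu}$, then $\begin{pmatrix} M & 0 \\ 0 & N \end{pmatrix}$ has entries in $\mathit{A}_{\mu}$, is invertible with inverse $\begin{pmatrix} M^{-1} & 0 \\ 0 & N^{-1} \end{pmatrix}$, and that inverse also has entries in $\mathit{A}_{\mu}$; hence $M + N \in \mathbb{GL}_{2n}(\mathit{A}_{\mu})$.

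Next I would record associativity. The matrices $(A + B) + C$ and $A + (B + C)$ are both the block-diagonal matrix $\mathrm{diag}(A, B, C)$ up to a reshuffling of basis vectors that is realised by a fixed permutation matrix with entries in $\{0, 1\} \subset \mathbb{C}.1 \subset \mathit{A}_{\mu}$ (Axiom 3), so they agree either literally, under the standard identification of nested block decompositions, or — if one insists on strict equality of matrices — up to local conjugation by that permutation matrix. Since the permutation matrix and its inverse have all entries in $\mathit{A}_{\mu}$, this conjugation is internal to the relevant $\mathbb{GL}$ or $\mathbb{I}demp$ set, so $+$ descends to a genuinely associative operation on the stabilised objects. (Commutativity up to the same kind of conjugation holds too, though only associativity is needed for the semigroup claim.) This is the only mildly delicate point, and I expect it to be the main — very minor — obstacle: one must decide whether ``semigroup'' here means strict associativity in $\mathbb{M}_{*}(\mathit{A}_{\mu})$ or associativity after passing to the direct limit under stabilisation, and the honest statement is the latter; I would simply note this identification explicitly, exactly as is standard in the construction of $K_{0}$ and $K_{1}$.

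Finally, I would observe that there is no unit required for a semigroup, so nothing further is needed; if one wished to upgrade to a monoid, the empty (size-zero) idempotent, respectively the size-zero invertible matrix, serves as the identity under $+$, again after stabilisation. Assembling the three observations — closure within $\mathit{A}_{\mu}$ and preservation of the idempotent/invertibility property, associativity up to the canonical identification, and the fact that $0$ and the permutation matrices all lie in $\mathit{A}_{\mu}$ by Axioms 1 and 3 — completes the verification that $(\mathbb{I}demp_{n}(\mathit{A}_{\mu}), +)$ and $(\mathbb{GL}_{n}(\mathit{A}_{\mu}), +)$, and hence their injective limits over $n$, are semigroups. $\qed$
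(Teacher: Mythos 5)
Your proposal is correct and follows the same direct-verification route the paper implicitly relies on (the paper in fact states Proposition 12 without a formal proof, only remarking afterwards that the direct sum is compatible with stabilisation and with $\sim_{l}$, and that associativity is then "immediate"). Your extra care in noting that closure uses $0\in\mathit{A}_{\mu}$ and $\mathbb{C}\cdot 1\subset\mathit{A}_{\mu}$, and that the operation is internal only after stabilising to $\mathbb{I}demp(\mathit{A}_{\mu})=\injlim_{n}\mathbb{I}demp_{n}(\mathit{A}_{\mu})$ (resp. $\mathbb{GL}(\mathit{A}_{\mu})$) with associativity holding up to conjugation by a permutation matrix in $\mathbb{GL}(\mathit{A}_{\mu})$, merely makes explicit what the paper leaves tacit.
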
  
\par
 The spaces  $\mathbb{I}demp_{n}(\mathit{A}_{\mu}) $,  $\mathbb{GL}_{n} (\mathit{A}_{\mu}) $ are compatible with  stabilisations.
 \par
 The direct sum addition of idempotents, resp. invertibles,  is compatible with the local conjugation equivalence relation. Indeed, if
 $s_{1}, {s_{2}}$ are conjugated through an inner auto-morphism defined by the element $u_{1}$  ($s_{1} \sim_{l} {s_{2}}$) and
 $t_{1}, {t_{2}}$ are conjugated through an inner auto-morphism defined by the element $u_{2}$  ($t_{1} \sim_{l} {t_{2}}$), then 
 $(s_{1} + t_{1})   \sim_{l} (s_{2} + t_{2})$  are conjugated through the inner auto-morphism  $u_{1} \oplus u_{2}$. 
  \par
  With this observation, the associativity of the addition is now immediate.
  \par
These show that
  $ \mathbb{I}demp(\mathit{A}_{\mu}) \; / \sim_{l}$, resp.  $ \mathbb{GL}(\mathit{A}_{\mu}) \;/ \sim_{l}$,
 is an associative semi-group.
\begin{proposition}   
The semi-groups 
 $ \mathbb{I}demp(\mathit{A}_{\mu}) \; / \sim_{l}$,   $ \mathbb{GL}(\mathit{A}_{\mu}) \;/ \sim_{l}$ are commutative.
\end{proposition}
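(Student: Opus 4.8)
The plan is to reduce commutativity of the two quotient semi-groups to a single, completely explicit \emph{local} conjugation. Since the semi-group law on $\mathbb{I}demp(\mathit{A}_{\mu})/\sim_{l}$ (resp. on $\mathbb{GL}(\mathit{A}_{\mu})/\sim_{l}$) is induced by the direct sum $\oplus$ and descends to classes by the compatibility of $\oplus$ with $\sim_{l}$ established just above, commutativity amounts precisely to the assertion that $A\oplus B\sim_{l} B\oplus A$ for all $A,B$.

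First I would put the two given classes on an equal footing: after applying the stabilisation maps of Definition 2 finitely many times, any two elements of $\mathbb{I}demp(\mathit{A}_{\mu})$ (resp. of $\mathbb{GL}(\mathit{A}_{\mu})$) are represented by matrices $A,B\in\mathbb{I}demp_{n}(\mathit{A}_{\mu})$ (resp. $A,B\in\mathbb{GL}_{n}(\mathit{A}_{\mu})$) of one common size $n$. Then I would introduce the block transposition matrix
\[
P \;=\; \begin{pmatrix} 0 & 1_{n}\\ 1_{n} & 0 \end{pmatrix}\ \in\ \mathbb{M}_{2n}(\mathit{A}_{\mu}),
\]
and check the three facts that make it do the job: (a) its entries, and those of its inverse $P^{-1}=P$, are only $0$ and $1$, which lie in $\mathbb{C}\cdot 1\subset\mathit{A}_{\mu}$ by Axiom 3 of Definition 3, so $P\in\mathbb{GL}_{2n}(\mathit{A}_{\mu})$; (b) a $2\times 2$ block multiplication gives $P\,(A\oplus B)\,P^{-1}=B\oplus A$; (c) neither this identity nor the verification in (a) uses anything about $A,B$ beyond their membership in $\mathbb{M}_{n}(\mathit{A}_{\mu})$, so the same $P$ settles the idempotent case and the invertible case at once. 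Combining (a) and (b) yields $A\oplus B\sim_{l} B\oplus A$ inside $\mathbb{M}_{2n}(\mathit{A}_{\mu})$, hence equality of the corresponding classes in the quotient semi-group, which is exactly commutativity.

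The only point that genuinely requires the localised structure, and therefore the only possible obstacle, is step (a): one must know that the permutation $P$ is invertible \emph{within} $\mathbb{GL}_{2n}(\mathit{A}_{\mu})$, not merely within $\mathbb{GL}_{2n}(\mathit{A})$. This is exactly what Axiom 3 supplies, and it is the reason scalars are required to sit in every level of the filtration: without $\mathbb{C}\cdot 1\subset\mathit{A}_{\mu}$ the conjugation realising $A\oplus B\sim B\oplus A$ need not be local, and commutativity of the \emph{locally} conjugated semi-group could fail. Everything else — the stabilisation bookkeeping and the block computation — is routine.
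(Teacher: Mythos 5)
Your proof is correct and follows essentially the same route as the paper's: conjugation by a fixed $2\times 2$-block scalar matrix that swaps the two diagonal blocks, which is \emph{local} because its entries lie in $\mathbb{C}\cdot 1\subset \mathit{A}_{\mu}$ by Axiom~3. The paper uses the rotation $\left(\begin{smallmatrix} 0 & -1\\ 1 & 0\end{smallmatrix}\right)$ where you use the symmetric transposition $\left(\begin{smallmatrix} 0 & 1\\ 1 & 0\end{smallmatrix}\right)$, but this is an immaterial cosmetic difference; your explicit remark about why Axiom~3 is the load-bearing hypothesis is a useful addition the paper leaves implicit.
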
  
\begin{proof}
The result follows from the following identity valid for any two matrices
$A, B \in  \mathbb{M}_{n}( \mathit{A}_{\mu}) $ 
\begin{equation}   
\begin{pmatrix}
A & 0\\
0 & B
\end{pmatrix}
=
\begin{pmatrix}
0 & -1\\
1 & 0
\end{pmatrix}
\begin{pmatrix}
B & 0\\
0 & A
\end{pmatrix}
\begin{pmatrix}
0 & 1\\
-1 & 0
\end{pmatrix} =
\begin{pmatrix}
0 & -1\\
1 & 0
\end{pmatrix}
\begin{pmatrix}
B & 0\\
0 & A
\end{pmatrix}
\begin{pmatrix}
0 & -1\\
1 & 0
\end{pmatrix} ^{-1}
,
\end{equation}   
which tells that  $(A + B) \sim_{l} (B+A)$.
\end{proof}\par
For more information about the relationship between the classical algebraic $K$-theory and the {\em local} $K$-theory,  see \S   9.
\subsection{$K_{0}(\mathit{A}_{\mu})$ and $K_{0}^{loc}(\mathit{A})$.}  
 \begin{definition}   
 Suppose $\mathit{A}$ is a localised unital associative algebra. We define
 \par
 \begin{equation}  
 K_{0} (\mathit{A}_{\mu}) \; = \; \injlim_{n \in \mathbb{N}} \;
Image: 
 \mathbb{G} \; (\mathbb{I}demp_{n}(\mathit{A}_{\mu}) \;/ \sim_{l}) \longrightarrow
  \mathbb{G} \; (\mathbb{I}demp_{n}(\mathit{A}_{\mu +2 }) \;/ \sim_{l})  ,
 \end{equation}    
 where $\mathbb{G}$ means Grothendieck completion, and
 \par
 \begin{equation}   
K_{0}^{loc} (\mathit{A})  \; = \; \projlim_{\mu \in \mathbb{N}}  \;  K_{0} (\mathit{A}_{\mu}).
\end{equation} 
\end{definition}
The loss of two filtration orders on the formula (10) are needed to insure that $\sim_{l}$ behalves transitively in the co-domain.
\par
Any local inner automorphism induces the identity on $K_{0}^{loc} (\mathit{A})$.
\subsection{ $K_{1}(\mathit{A}_{\mu})$  and  $K_{1}^{loc} (\mathit{A})$.}  
\hspace{15cm}

In this sub-section $[u]_{\sim_{sl}}$ denotes the class of the element $u \in \mathbb{GL}(\mathit{A}_{\mu})$ with respect to the equivalence relation generated by stabilisation and local conjugation $\sim_{l}$.
\par
We call $[u]_{\sim_{sl}}$  the {\em abstract Jordan canonical form} of the invertible element $u$. The
$K_{1}(\mathit{A}_{\mu})$ we are going to define preserves the information provided by the abstract Jordan form. The classical definition of $K_{1}$ extracts a minimal part of the abstract Jordan form.  As the addition in the semi-group
$\mathbb{GL}(\mathit{A})$ is given by  direct sum and the Jordan canonical form $J(u)$ (at least in the classical case of the algebra $\mathbb{GL}_{n}(\mathbb{R})$) behaves additively (\; $J(u\oplus v) = J(u) \oplus J(V)$\; modulo permutations of the Jordan blocks\; ), given an arbitrary element  $u \in \mathbb{GL}(\mathit{A})$, it is not reasonable to expect existence of an element $\tilde{u}$ such that
$[u + \tilde{u}]_{\sim_{sl}} = [ 1_{2n}]_{\sim_{sl}}$. Given that we want to define $K_{1}(\mathit{A}_{\mu})$ as a group, we introduce the group structure (opposite elements) forcibly. In the case of the classical $K_{1}$, the class of the element $u^{-1}$ represents the opposite class,  $- [u] \in K_{1}(\mathit{A})$.
We intend produce a theory in which this {\em relation} is preserved. 
\par
Note that if $u_{1} \sim_{sl} u_{2}$, then  $u^{-1}_{1} \sim_{sl} u^{-1}_{2}$.
As said above, we would like the class of the element $u^{-1}$ to represent the opposite class, i.e. to think of
$u + u^{-1} \sim_{sl} I_{2n} $  as representing the zero class.
This need, along with other related facts, see Theorem 40.- i), proof of Theorem 27 and Theorem 35, lead us to concentrate our attention onto the space of matrices of the form $ u + u^{-1}$ and based on it to introduce a new equivalence relation.
\begin{definition}  
We define, for any $u \in \mathbb{GL}_{n}(\mathit{A}_{\mu})$
\begin{equation}  
\mathcal{O}(u) :=
\begin{pmatrix}
u & 0\\
0 & u^{-1}
\end{pmatrix}
\in \mathbb{GL}_{2n}(\mathit{A}_{\mu})
.
\end{equation}  
\end{definition}   
\begin{definition}   
Let
\begin{equation}   
\mathbb{O}_{2n}(\mathit{A}_{\mu}) := \{ [\mathcal{O}(u)] \;|\; u \in   \mathbb{GL}_{n}(\mathit{A}_{\mu}) \}
\end{equation}   
and
\begin{equation}  
\mathbb{O}(\mathit{A}_{\mu}) := \injlim_{\mu \in \mathbb{N}}   \mathbb{O}_{2n}(\mathit{A}_{\mu})
\{ [\mathcal{O}(u)] \;|\; u \in   \mathbb{GL}_{n}(\mathit{A}_{\mu}) \}
\end{equation}  
\end{definition}  
Next we analyse the properties of this space.
\begin{proposition}   
-i) The space $\mathbb{O}(\mathit{A}_{\mu})$ 
is a commutative semi-group with zero element, given by the identity
\par
-ii) the mapping
\begin{equation}  
\mathcal{O}: \mathbb{GL}(\mathit{A}_{\mu})  \longrightarrow \mathbb{O}(\mathit{A}_{\mu} )
\end{equation}  
is additive and commutes with {\em local} conjugation
\begin{equation}
\mathcal{O}(u_{1}  + u_{2} )  =   \mathcal{O}( u_{1} )  +     \mathcal{O}( u_{2} )  
\end{equation}
\begin{equation}
\mathcal{O}(\lambda \; u \; \lambda^{-1})  =   \lambda  \; \mathcal{O}( u )   \; \lambda^{-1}, \;\; \lambda \in \mathbb{GL}(\mathit{A}_{\mu}),
\end{equation}
i.e. if $u_{1} \sim_{sl} u_{2}$ then  $\mathcal{O}(u_{1}) \sim_{sl} \mathcal{O}(u_{2})$ 
\par
-iii) 
\begin{equation}
\mathcal{O}(u^{-1}) \sim_{sl}  \mathcal{O}(u).
\end{equation}
\par
-iv)
\begin{equation}  
\mathcal{O}(u_{1} u_{2}) \neq   \mathcal{O}(u_{1}) \mathcal{O}(u_{2}),
\end{equation}  
\end{proposition}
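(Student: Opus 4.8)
The plan is to reduce all four assertions to one-line block-matrix manipulations, using two facts already in hand: the interchange identity (9), which simultaneously records that a direct sum commutes up to local conjugation and that a permutation matrix implements a local conjugation (its entries lie in $\mathbb{C}.1\subset\mathit{A}_{\mu}$ by Axiom 3); and the compatibility of $\mathbb{GL}(\mathit{A}_{\mu})$ with stabilisation, from which the compatibility of $\mathcal{O}$ with stabilisation is immediate, since $\mathcal{O}(u\oplus1)$ is local-conjugate to $\mathcal{O}(u)\oplus1_{2}$. Throughout, ``equality'' in (16) and (17) is to be read as equality of classes in $\mathbb{O}(\mathit{A}_{\mu})$, not of matrices.

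For (i), I would observe that $\mathbb{O}(\mathit{A}_{\mu})$ carries the direct-sum operation inherited from $\mathbb{GL}(\mathit{A}_{\mu})$, that it is closed under it because $\mathcal{O}(u)\oplus\mathcal{O}(v)$ is local-conjugate to $\mathcal{O}(u\oplus v)$ (this is also (16)), and that $\mathcal{O}(1)=I_{2}$. Commutativity of the induced operation on classes is exactly (9) applied with $A=\mathcal{O}(u_{1})$, $B=\mathcal{O}(u_{2})$. Neutrality of $[\mathcal{O}(1)]$ follows once (16) is available: $\mathcal{O}(u)+\mathcal{O}(1)$ agrees with $\mathcal{O}(u\oplus1)$ up to the permutation interleaving the two summands, and $u\oplus1\sim_{sl}u$ by stabilisation, whence $[\mathcal{O}(u)]+[\mathcal{O}(1)]=[\mathcal{O}(u)]$.

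For (ii), writing $u_{1}+u_{2}=u_{1}\oplus u_{2}$ one has $(u_{1}\oplus u_{2})^{-1}=u_{1}^{-1}\oplus u_{2}^{-1}$, so $\mathcal{O}(u_{1}+u_{2})$ and $\mathcal{O}(u_{1})+\mathcal{O}(u_{2})$ are one and the same block-diagonal matrix up to the permutation moving the block $u_{2}$ past the block $u_{1}^{-1}$; that is (16). For (17) I would use $(\lambda u\lambda^{-1})^{-1}=\lambda u^{-1}\lambda^{-1}$ to factor
\[
\mathcal{O}(\lambda u\lambda^{-1})=\begin{pmatrix}\lambda u\lambda^{-1}&0\\0&\lambda u^{-1}\lambda^{-1}\end{pmatrix}=(\lambda\oplus\lambda)\,\mathcal{O}(u)\,(\lambda\oplus\lambda)^{-1},
\]
with $\lambda\oplus\lambda\in\mathbb{GL}_{2n}(\mathit{A}_{\mu})$, which is precisely (17) under the reading $\lambda\,\mathcal{O}(u)\,\lambda^{-1}:=(\lambda\oplus\lambda)\mathcal{O}(u)(\lambda\oplus\lambda)^{-1}$. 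Combining (16) and (17) with compatibility with stabilisation yields the displayed implication $u_{1}\sim_{sl}u_{2}\Rightarrow\mathcal{O}(u_{1})\sim_{sl}\mathcal{O}(u_{2})$.

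For (iii), $\mathcal{O}(u^{-1})=\left(\begin{smallmatrix}u^{-1}&0\\0&u\end{smallmatrix}\right)$ is the conjugate of $\mathcal{O}(u)=\left(\begin{smallmatrix}u&0\\0&u^{-1}\end{smallmatrix}\right)$ by the permutation $\left(\begin{smallmatrix}0&-1\\1&0\end{smallmatrix}\right)$ occurring in (9), so $\mathcal{O}(u^{-1})\sim_{l}\mathcal{O}(u)$, a fortiori $\sim_{sl}$. For (iv), a direct product gives $\mathcal{O}(u_{1})\mathcal{O}(u_{2})=\left(\begin{smallmatrix}u_{1}u_{2}&0\\0&u_{1}^{-1}u_{2}^{-1}\end{smallmatrix}\right)$, whose lower-right block is $u_{1}^{-1}u_{2}^{-1}$, whereas that of $\mathcal{O}(u_{1}u_{2})$ is $(u_{1}u_{2})^{-1}=u_{2}^{-1}u_{1}^{-1}$; these differ in general, and to turn the inequality into a statement I would exhibit two non-commuting invertible elements (for instance two invertible $2\times2$ complex matrices), so that already over $\mathbb{M}_{2}(\mathbb{C})$ the two matrices are distinct. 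The only point demanding attention is the bookkeeping in (i)--(ii): one must index the interleaving permutations consistently so that they lie in the intended $\mathbb{GL}(\mathit{A}_{\mu})$, and must not confuse equality of classes with equality of matrices; beyond that each item is a single block computation.
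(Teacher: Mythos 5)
Your proof is correct. The paper states this proposition without any accompanying argument (the proof of Proposition~19 merely invokes it for closure and the neutral element), so there is no paper proof to compare against; your block-matrix derivation supplies exactly what is missing. Two small points are worth having written down, both of which you handle correctly: equations~(16) and~(17) cannot be literal matrix equalities but must be read as equalities of classes in $\mathbb{O}(\mathit{A}_{\mu})$ modulo the interleaving permutations, which lie in $\mathbb{GL}$ because their entries are in $\mathbb{C}\cdot 1\subset\mathit{A}_{\mu}$ by Axiom~3; and the conjugating $\lambda$ on the right side of~(17) must be promoted to $\lambda\oplus\lambda$ to match sizes, since $\mathcal{O}$ doubles the matrix dimension. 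Your use of the interchange identity~(9) for commutativity and for part~(iii), and the computation showing $\mathcal{O}(u_{1})\mathcal{O}(u_{2})$ has lower-right block $u_{1}^{-1}u_{2}^{-1}$ rather than $u_{2}^{-1}u_{1}^{-1}$, are all exactly the computations the paper appears to have in mind but leaves implicit.
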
     
\begin{proposition}  
$\mathbb{O}(\mathit{A}_{\mu})/ \sim_{sl}$ is a sub semi-group with zero element in the commutative  semi-group  
$\mathbb{GL}(\mathit{A}_{\mu})/ \sim_{sl}$.
\end{proposition}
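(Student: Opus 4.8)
\emph{Sketch of the intended proof.} The plan is to read off the statement from the two immediately preceding propositions; nothing genuinely new is computed, only the bookkeeping of how block sum and the distinguished element interact with $\sim_{sl}$.

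First I would observe that $\mathbb{GL}(\mathit{A}_{\mu})/\sim_{sl}$ is itself a commutative semi-group with zero element. Since $\mathbb{GL}(\mathit{A}_{\mu})=\injlim_{n}\mathbb{GL}_{n}(\mathit{A}_{\mu})$ already identifies an invertible matrix with all of its stabilisations, on $\mathbb{GL}(\mathit{A}_{\mu})$ the relation $\sim_{sl}$ reduces to $\sim_{l}$; hence $\mathbb{GL}(\mathit{A}_{\mu})/\sim_{sl}$ is exactly the commutative associative semi-group produced by the earlier propositions (associativity from the block-sum manipulations, commutativity from the block-swap identity $A+B\sim_{l}B+A$). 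Moreover $u+1$ is merely a stabilisation of $u$, so $[u+1]_{\sim_{sl}}=[u]_{\sim_{sl}}$ and the class $[1]_{\sim_{sl}}$ is a neutral (zero) element. This fixes the ambient semi-group in which the sub-semi-group must sit.

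Next I would verify the two properties of $\mathbb{O}(\mathit{A}_{\mu})/\sim_{sl}$, which is a subset of $\mathbb{GL}(\mathit{A}_{\mu})/\sim_{sl}$ by construction. For closure under addition: given $[\mathcal{O}(u_{1})]_{\sim_{sl}}$ and $[\mathcal{O}(u_{2})]_{\sim_{sl}}$, the additivity of $\mathcal{O}$ from the preceding proposition gives $\mathcal{O}(u_{1})+\mathcal{O}(u_{2})\sim_{l}\mathcal{O}(u_{1}+u_{2})$ — at the level of matrices the two differ only by a permutation of diagonal blocks, realised by conjugation with a permutation matrix, which is legitimate since a permutation matrix has all its entries in $\mathbb{C}\cdot 1\subset\mathit{A}_{\mu}$ — so the sum of the two classes is $[\mathcal{O}(u_{1}+u_{2})]_{\sim_{sl}}$, still in $\mathbb{O}(\mathit{A}_{\mu})/\sim_{sl}$ because $u_{1}+u_{2}\in\mathbb{GL}(\mathit{A}_{\mu})$. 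Associativity and commutativity are then inherited from the ambient semi-group, so $\mathbb{O}(\mathit{A}_{\mu})/\sim_{sl}$ is a sub-semi-group; and $1_{2n}=\mathcal{O}(1_{n})$ exhibits the zero element $[1]_{\sim_{sl}}$ as an element of $\mathbb{O}(\mathit{A}_{\mu})/\sim_{sl}$, which is the final assertion (and is consistent with part -i) of the preceding proposition).

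I do not anticipate a real obstacle. The single point worth stating carefully is that block sum genuinely descends to $\sim_{sl}$-classes, i.e. that $\sim_{sl}$ is a congruence for $+$; on $\mathbb{GL}(\mathit{A}_{\mu})$ this is the compatibility of $\sim_{l}$ with direct sum already recorded earlier in this section (if $s_{i}\sim_{l}t_{i}$ via $u_{i}$, then $s_{1}+s_{2}\sim_{l}t_{1}+t_{2}$ via $u_{1}+u_{2}$). Everything else is a quotation of the two preceding propositions.
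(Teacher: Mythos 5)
Your proof is correct and follows essentially the same route as the paper's: you invoke Proposition 13 for the ambient commutative semi-group structure on $\mathbb{GL}(\mathit{A}_{\mu})/\sim_{sl}$ and Proposition 18 for closure of $\mathbb{O}(\mathit{A}_{\mu})$ under block sum together with the zero element $\mathcal{O}(1_n)=1_{2n}$. You are slightly more careful than the paper at one point, rightly observing that the ``additivity'' $\mathcal{O}(u_1)+\mathcal{O}(u_2)=\mathcal{O}(u_1+u_2)$ of formula (16) holds only up to conjugation by a block-permutation matrix, which is harmless because that matrix lies in $\mathbb{GL}(\mathit{A}_{\mu})$ by Axiom 3.
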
   
\begin{proof}   
According to Proposition 13, $\mathbb{GL}(\mathit{A}_{\mu})/ \sim_{l}$ is a commutative semi-group; the addition in this semi-group is given by the direct sum.
\par
Proposition 18 shows that $\mathbb{O}(\mathit{A}_{\mu})$ is closed under addition and that it has zero element.
\end{proof}   
\begin{definition}  
Define
\begin{equation}   
K_{1}(\mathit{A}_{\mu}) :=  \; Image:
[ (\mathbb{GL}(\mathit{A}_{\mu}) / \sim_{sl} ) / ( \;  \mathbb{O}(\mathit{A}_{\mu}) /\sim_{sl} \;)
\longrightarrow
 (\mathbb{GL}(\mathit{A}_{\mu+2}) / \sim_{sl} \; ) / ( \;  \mathbb{O}(\mathit{A}_{\mu+2}) /\sim_{sl} \;)]
.
\end{equation}   
The loss of two filtration orders in the formula (20) are needed to insure that the the relation $\sim_{sl}$ behalves  transitively in the co-domain.
\end{definition}  
The next definition explains the meaning of the two quotients in the formula (20) by introducing
a new equivalence relation $\sim_{\mathbb{O}_{\mu}}$.
\begin{definition}   
We say that two elements $u_{1}, u_{2} \in \mathbb{GL}(\mathit{A}_{\mu})$ are equivalent modulo the sub semi-group
 $\mathbb{O}(\mathit{A}_{\mu})/ \sim_{sl}$ (and write $u_{1}  \sim_{\mathbb{O}_{\mu}}  u_{2}$)
 provided there exist two elements $\xi_{1}, \xi_{2} \in   \mathbb{O}(\mathit{A}_{\mu})$
 such that  $u_{1} + \xi_{1} = u_{2} + \xi_{2}$.
 \end{definition}  
 \begin{remark} 
 Let us denote by $J(u)$ the equivalent class of the invertible matrix $u \in \mathbb{GL}_{n}(\mathit{A}_{\mu})$ with respect to the local conjugation equivalence relation $\sim_{l}$.
 Given that the pair $(u, u^{-1})$ is a direct sum of matrices and that the Jordan canonical form $J$ acts separately onto the two components, the relation $\sim_{\mathbb{O}_{\mu}}$ kills all pairs of abstract Jordan canonical forms $(J(u), J(u^{-1}))$, for any
 $u \in \mathbb{GL}(\mathit{A}_{\mu})$.
 \par
 Note that the two components of the pair $(u, u^{-1})$ determine one each other.
 \end{remark}  
 \begin{proposition}  
 -i) 
 $\sim_{\mathbb{O}_{\mu}}$ is an equivalence relation.
 \par
 Any local inner auto-morphism induces the identity on $K_{1}^{loc} (\mathit{A})$.
 \par
 -ii) If  $ \xi \in \mathbb{O}_{2n}(\mathit{A}_{\mu})$ then $\xi \sim_{\mathbb{O}} 1_{2n}$, i.e. $\xi$ represents the zero element in $K_{1}(\mathit{A}_{\mu}) $.
 \par
 \end{proposition}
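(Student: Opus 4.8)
The plan is to derive all three assertions formally from the semi-group facts already established, after fixing the reading of the defining condition of $\sim_{\mathbb{O}_{\mu}}$. In view of the definition of $K_{1}(\mathit{A}_{\mu})$ and of Definition 23, the relation $u_{1}\sim_{\mathbb{O}_{\mu}}u_{2}$ is to be understood inside the commutative semi-group $\mathbb{GL}(\mathit{A}_{\mu})/\sim_{sl}$: there exist $\xi_{1},\xi_{2}\in\mathbb{O}(\mathit{A}_{\mu})$ with $u_{1}+\xi_{1}\sim_{sl}u_{2}+\xi_{2}$. With this in hand $\sim_{\mathbb{O}_{\mu}}$ is exactly the usual "difference by a sub-semi-group" relation, so its three properties reduce to the fact that the image of $\mathbb{O}(\mathit{A}_{\mu})$ in $\mathbb{GL}(\mathit{A}_{\mu})/\sim_{sl}$ is a non-empty sub-semi-group containing the neutral element $1$ and closed under direct sum (the preceding Propositions on $\mathbb{O}(\mathit{A}_{\mu})$), together with commutativity of the direct sum on $\mathbb{GL}(\mathit{A}_{\mu})/\sim_{sl}$ (Proposition 13).

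For -i) I would verify the three axioms. Reflexivity: $1_{2n}=\mathcal{O}(1_{n})$ lies in $\mathbb{O}(\mathit{A}_{\mu})$, so $\xi_{1}=\xi_{2}=1_{2n}$ witnesses $u\sim_{\mathbb{O}_{\mu}}u$. Symmetry is immediate from the symmetry of the defining condition. For transitivity, I would start from $u_{1}+\xi_{1}\sim_{sl}u_{2}+\xi_{2}$ and $u_{2}+\eta_{2}\sim_{sl}u_{3}+\eta_{3}$, add $\eta_{2}$ to the first relation and $\xi_{2}$ to the second, and then rearrange the direct summands by commutativity (Proposition 13) to land on $u_{1}+(\xi_{1}+\eta_{2})\sim_{sl}u_{3}+(\eta_{3}+\xi_{2})$; since $\mathbb{O}(\mathit{A}_{\mu})$ is closed under direct sum, $\xi_{1}+\eta_{2}$ and $\eta_{3}+\xi_{2}$ again lie in $\mathbb{O}(\mathit{A}_{\mu})$, whence $u_{1}\sim_{\mathbb{O}_{\mu}}u_{3}$. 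For the statement on local inner auto-morphisms I would simply note that $\lambda u\lambda^{-1}\sim_{l}u$ forces $\lambda u\lambda^{-1}\sim_{sl}u$, so conjugation by $\lambda$ is already trivial on $\mathbb{GL}(\mathit{A}_{\mu})/\sim_{sl}$, hence modulo $\mathbb{O}(\mathit{A}_{\mu})/\sim_{sl}$, hence on $K_{1}(\mathit{A}_{\mu})$, and, after the projective limit over $\mu$, on $K_{1}^{loc}(\mathit{A})$.

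For -ii), given $\xi\in\mathbb{O}_{2n}(\mathit{A}_{\mu})\subset\mathbb{O}(\mathit{A}_{\mu})$, I would apply the definition of $\sim_{\mathbb{O}_{\mu}}$ with $\xi_{1}:=1_{2n}$ and $\xi_{2}:=\xi$, both of which lie in $\mathbb{O}(\mathit{A}_{\mu})$; then $\xi+1_{2n}\sim_{sl}1_{2n}+\xi$ by commutativity (Proposition 13), which is exactly $\xi\sim_{\mathbb{O}_{\mu}}1_{2n}$. Pushing this forward along the stabilisation map from filtration level $\mu$ to level $\mu+2$ that occurs in the definition of $K_{1}(\mathit{A}_{\mu})$ shows that the class of $\xi$ in $K_{1}(\mathit{A}_{\mu})$ equals the class of $1_{2n}$, i.e. $\xi$ represents the neutral (zero) element.

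I do not expect a genuine obstacle; the only delicate point is the interpretation fixed in the first paragraph. If one tried to read the "$=$" in Definition 23 as literal equality of matrices, transitivity would already fail, since $\xi_{1}+\eta_{2}$ and $\eta_{2}+\xi_{1}$ agree only up to $\sim_{sl}$, not on the nose; so the proof really uses that we have passed to $\mathbb{GL}(\mathit{A}_{\mu})/\sim_{sl}$. Beyond that one must be mildly careful to carry the conclusions through the direct limit over the matrix size $n$ and the projective limit over $\mu$ when transporting them from $\mathbb{GL}(\mathit{A}_{\mu})/\sim_{sl}$ up to $K_{1}(\mathit{A}_{\mu})$ and finally to $K_{1}^{loc}(\mathit{A})$; everything else is a direct citation of Proposition 13 and the structural Propositions on $\mathbb{O}(\mathit{A}_{\mu})$.
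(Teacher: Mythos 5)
Your proof is correct and follows essentially the same verification as the paper's: reflexivity via $1\in\mathbb{O}(\mathit{A}_{\mu})$, symmetry by inspection, transitivity by adding the cross‑witnesses, and the same witness $\xi+1\sim 1+\xi$ for part -ii). Your remark that the ``$=$'' of Definition 23 must be read in $\mathbb{GL}(\mathit{A}_{\mu})/\sim_{sl}$ rather than as literal matrix equality is a correct and necessary refinement of the paper's wording: both the transitivity step $u_{2}+\xi_{2}+\tilde{\xi}_{2}=u_{2}+\tilde{\xi}_{2}+\xi_{2}$ and the witness $\xi+1_{2n}=1_{2n}+\xi$ hold only after permuting direct summands, i.e.\ up to the conjugation underlying $\sim_{sl}$ (Proposition 13), exactly as you observe.
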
  
 \begin{proof}   
 -i)
 $\sim_{\mathbb{O}_{\mu}}$ is reflexive. Indeed, if $u \in \mathbb{GL}_{n}(\mathit{A}_{\mu})$ and because 
 $1_{n} \in \mathbb{O}_{n}(\mathit{A}_{\mu})$, one has
  $u + 1_{n} = u + 1_{n }\in \mathbb{GL}_{2n}(\mathit{A}_{\mu})$.
 \par
$\sim_{\mathbb{O}_{\mu}}$ is symmetric: if  $u_{1}+ \xi_{1} = u_{2} + \xi_{2}$, then  $u_{2}+ \xi_{2} = u_{1} + \xi_{1}$.
\par
The transitivity is clear. If
 $$u_{1}+ \xi_{1} = u_{2} + \xi_{2} \;\;  and \;\; u_{2}+ \tilde{\xi}_{2} = u_{3} + \xi_{3}$$
  then
$$u_{1}+ (\xi_{1} + \tilde{\xi}_{2 }) = u_{2} + \xi_{2} +  \tilde{\xi}_{2 } = u_{3} +  (\xi_{2} + \xi_{3}).$$ 
\par
-ii) 
For any element $\xi \in \mathbb{O}(\mathit{A}_{\mu})$,   the element $ 1_{n} $ belongs already to  $ \mathbb{O}(\mathit{A}_{\mu})$ and
then one gets
\begin{equation} 
\xi + 1_{n} = 1_{n} + \xi,
\end{equation}  
which shows that  $\xi \sim_{\mathbb{O}_{\mu}} 1_{n}$, i.e. $\xi$ represents the zero element in $K_{1}(\mathit{A}_{\mu})$.
\end{proof}   
\begin{proposition} 
-i) The equivalence relation $\sim_{\mathbb{O}_{\mu}}$ is compatible with the semi-group structure: for any $u_{1}, u_{2} \in \mathbb{GL}(\mathit{A})$
 \begin{equation}  
 [  u_{1} + u_{2}   ]   =   [  u_{1} ]  + [ u_{2}  ]   \in K_{1}(\mathit{A}_{\mu})
 \end{equation}    
-ii) $K_{1}(\mathit{A}_{\mu})$ is a (commutative) group; for any $u \in \mathbb{GL}(\mathit{A}_{\mu})$
\begin{equation}  
- [u] = [ u^{-1} ] \in K_{1}(\mathit{A}_{\mu}).
\end{equation}   
\par
-iii) $K_{1}(\mathit{A}_{\mu})$ and $K_{1}^{loc}(\mathit{A})$ consist of equivalence classes of invertible elements; 
the elements of this group are not {\em virtual} elements (as in the Grothendieck completion case). 
\par
-iv) Let $u_{1}, u_{2} \in \mathbb{GL}(\mathit{A}_{\mu})$ such that 
$[ u_{1}] = [u_{2}] \in K_{1}(\mathit{A}_{\mu})$. Then there exist two
elements $\xi_{1}, \xi_{2} \in \mathbb{O}(\mathit{A}_{\mu})$ and $u \in \mathbb{GL}(\mathit{A}_{\mu})$ such that
\begin{equation}   
u_{1} + \xi_{1} = u\; (u_{2} + \xi_{2}) \; u^{-1} \in \mathbb{GL}(\mathit{A}_{\mu}) 
\end{equation}   
\end{proposition}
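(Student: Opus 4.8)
The strategy is to derive all four assertions from the semi-group facts already in place: $\mathbb{GL}(\mathit{A}_{\mu})/\!\sim_{sl}$ is a commutative, associative semi-group whose zero is the class of the identity (commutativity being the identity (9)), and $\mathbb{O}(\mathit{A}_{\mu})/\!\sim_{sl}$ is a sub-semi-group of it which contains the zero and is closed under $\oplus$.

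For -i), I first check that $\oplus$ descends to the double quotient used to define $K_{1}(\mathit{A}_{\mu})$. Compatibility of $\oplus$ with $\sim_{sl}$ is already recorded (direct sum is compatible with $\sim_{l}$, and trivially with stabilisation). For the $\sim_{\mathbb{O}_{\mu}}$ part, suppose $u_{1}\sim_{\mathbb{O}_{\mu}}u_{1}'$ and $u_{2}\sim_{\mathbb{O}_{\mu}}u_{2}'$, witnessed by $\xi_{1},\xi_{1}',\xi_{2},\xi_{2}'\in\mathbb{O}(\mathit{A}_{\mu})$ with $u_{1}\oplus\xi_{1}=u_{1}'\oplus\xi_{1}'$ and $u_{2}\oplus\xi_{2}=u_{2}'\oplus\xi_{2}'$ (as classes mod $\sim_{sl}$). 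Forming the direct sum of the two equalities and reshuffling the summands by (9) gives $(u_{1}\oplus u_{2})\oplus(\xi_{1}\oplus\xi_{2})=(u_{1}'\oplus u_{2}')\oplus(\xi_{1}'\oplus\xi_{2}')$, and $\xi_{1}\oplus\xi_{2}$, $\xi_{1}'\oplus\xi_{2}'$ lie in $\mathbb{O}(\mathit{A}_{\mu})$ by closure; hence $u_{1}\oplus u_{2}\sim_{\mathbb{O}_{\mu}}u_{1}'\oplus u_{2}'$. Taking the image at filtration level $\mu+2$ and then the projective limit preserves this. With $\oplus$ well defined on the quotient, (22) is just the definition of the induced addition, and associativity and commutativity are inherited.

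For -ii), the zero of $K_{1}(\mathit{A}_{\mu})$ is $[1_{n}]$ — independent of $n$ by stabilisation and neutral since $[u]+[1_{n}]=[u\oplus 1_{n}]=[u]$. Given $u\in\mathbb{GL}_{n}(\mathit{A}_{\mu})$, part -i) gives $[u]+[u^{-1}]=[u\oplus u^{-1}]=[\mathcal{O}(u)]$; since $\mathcal{O}(u)\in\mathbb{O}_{2n}(\mathit{A}_{\mu})$ it represents the zero element (established above, cf. (21)), so $[u]+[u^{-1}]=0$ and $-[u]=[u^{-1}]$, which is (23). Thus $K_{1}(\mathit{A}_{\mu})$ is an abelian group, and $K_{1}^{loc}(\mathit{A})=\projlim_{\mu}K_{1}(\mathit{A}_{\mu})$ is an abelian group, being a projective limit of abelian groups along group homomorphisms. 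Assertion -iii) is then the observation that, since $\mathbb{GL}(\mathit{A}_{\mu})/\!\sim_{sl}$ already has a zero and, after quotienting by $\sim_{\mathbb{O}_{\mu}}$, has inverses $[u^{-1}]$ of honest invertibles, no Grothendieck symmetrisation is needed: every element of $K_{1}(\mathit{A}_{\mu})$, hence of $K_{1}^{loc}(\mathit{A})$, is the class of a genuine invertible matrix rather than a formal difference.

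Part -iv) is the substantive point and the main obstacle. I would unwind the meaning of $[u_{1}]=[u_{2}]$ in $K_{1}(\mathit{A}_{\mu})$: by the definition of $K_{1}(\mathit{A}_{\mu})$ as an image between the level-$\mu$ and level-$(\mu+2)$ double quotients, this equality forces $[u_{1}]_{\sim_{sl}}+\eta_{1}=[u_{2}]_{\sim_{sl}}+\eta_{2}$ in the relevant $\mathbb{GL}/\!\sim_{sl}$ for some $\eta_{1},\eta_{2}$ in the corresponding $\mathbb{O}/\!\sim_{sl}$; choosing representatives $\xi_{1},\xi_{2}\in\mathbb{O}(\mathit{A}_{\mu})$ this reads $u_{1}\oplus\xi_{1}\sim_{sl}u_{2}\oplus\xi_{2}$. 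The key reduction is that a chain of stabilisations and local conjugations collapses to a single normal form ``stabilise once, then conjugate once'': a local conjugation followed by a stabilisation by $1$ equals the stabilisation followed by conjugation by the stabilised conjugator $\lambda\oplus 1$, and local conjugations compose; so $u_{1}\oplus\xi_{1}\sim_{sl}u_{2}\oplus\xi_{2}$ yields identity blocks $1_{k},1_{l}$ and $u\in\mathbb{GL}(\mathit{A}_{\mu})$ with $u_{1}\oplus\xi_{1}\oplus 1_{k}=u\,(u_{2}\oplus\xi_{2}\oplus 1_{l})\,u^{-1}$. Finally $1_{k}\in\mathbb{O}_{k}(\mathit{A}_{\mu})$ and $\mathbb{O}$ is closed under $\oplus$, so $\xi_{1}\oplus 1_{k}$ and $\xi_{2}\oplus 1_{l}$ again lie in $\mathbb{O}(\mathit{A}_{\mu})$; renaming them $\xi_{1},\xi_{2}$ produces (24). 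The delicate points — and the reason -iv) is the hard part — are the filtration-index bookkeeping (the two-order slack built into the definition of $K_{1}(\mathit{A}_{\mu})$ is exactly what must absorb the filtration loss incurred by conjugation and by passage through the quotient) and the collapse of an a priori arbitrary chain of moves into the single sandwiched equation (24); the semi-group inputs needed — commutativity (9), closure of $\mathbb{O}$ under $\oplus$, $1_{k}\in\mathbb{O}_{k}$, compatibility of $\oplus$ with $\sim_{l}$ — are all already available.
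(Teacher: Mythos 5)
Your argument follows the paper's proof essentially step for step: part -i) by adding the two $\sim_{\mathbb{O}_{\mu}}$-witnessing equations and reshuffling via commutativity, part -ii) via $[u]+[u^{-1}]=[\mathcal{O}(u)]$ being annihilated by the $\mathbb{O}$-quotient, and part -iv) by unwinding the two layers of the double quotient and collapsing a chain of stabilisations and local conjugations into a single stabilised conjugation. The paper's own treatment of -iv) is just a one-line pointer, and your normal-form argument is a legitimate expansion of exactly that idea; you also correctly flag (without fully discharging) the filtration-index bookkeeping between level $\mu$ and $\mu+2$ that the paper's proof leaves implicit.
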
  
\begin{proof}   
-i)  Suppose $u_{1} \sim_{\mathbb{O}_{\mu}} u_{1}^{'}$  and  $u_{2} \sim_{\mathbb{O}_{\mu}} u_{2}^{'}$. This means there exist four
elements  $\xi_1{},  \xi_{1}^{'}, \xi_{2}, \xi_{2}^{'} \in \mathbb{O}_{\mu}$ such that
\begin{equation}     
u_{1} + \xi_{1} = u_{1}^{'} + \xi_{1}^{'},   \;\;\  u_{2} + \xi_{2} = u_{2}^{'} + \xi_{2}^{'}, 
\end{equation}    
which give by addition
\begin{equation}   
( u_{1} + u_{2}) + (\xi_{1} + \xi_{2})  =  ( u_{1}^{'} + u_{2}^{'}) + (\xi_{1}^{'} + \xi_{2}^{'}),  
\end{equation}    
i.e.
\begin{equation}   
( u_{1} + u_{2})  \sim_{\mathbb{O}_{\mu}}  ( u_{1}^{'} + u_{2}^{'})) 
\end{equation}    
\par
--ii) Let $u \in \mathbb{GL}(\mathit{A}_{\mu})$. Then $(u + u^{-1}) + 1_{2n} = 1_{2n} + (u \oplus u^{-1}) $. In other words,
\begin{equation}  
[u + u^{-1}] = [u] + [u^{-1}]  \sim_{\mathbb{O}_{\mu}}  1_{2n} \sim_{\mathbb{O}_{\mu}}  0 \in  K_{1}(\mathit{A}_{\mu}).
\end{equation}    
\par
-iii) It is clear.
\par
-iv) Once the factorisation through the semi-group $\mathbb{O}(\mathit{A}_{\mu})$ is used to define
$K_{1}(\mathit{A}_{\mu})$, it remains to involve the equivalence relation $\sim_{l}$ (note  $\sim_{l}$ and $\sim_{s}$ commute).
\end{proof}   

  \begin{remark}   
  The factorisation used in the formula (20) for the definition of $K_{1}$ is weaker than the factorisation used in
   the classical definition of the  $K_{1}$-theory group. In fact, the identity (23) is equivalent to the factorisation (20); on the other side, formula (23) holds in the classical $K$-theory, see e.g.  \cite{Milnor}.
\end{remark}  
\par
We may be more explicit on this point. In the classical definition 
\begin{equation}   
K_{1} (\mathit{A}) := \mathbb{GL} (\mathit{A}) / [ \mathbb{GL} (\mathit{A}), \mathbb{GL} (\mathit{A}) ].
\end{equation}  
On the other side, the following identity holds
\begin{equation}   
\begin{pmatrix}
A  &  0\\
0  &  B
\end{pmatrix}
=
\begin{pmatrix}
AB & 0\\
0  &  1
\end{pmatrix}  
\begin{pmatrix}
B^{-1} & 0\\
0  &  B
\end{pmatrix}    
, 
\end{equation} 
which shows that multiplicatively, modulo elements of the form $\mathcal{O}(B)$ (which belong to the commutator sub-group
$\mathbb{O}(\mathcal{A})$), one has
\begin{equation}  
[A + B] = [AB] \in K_{1}(\mathit{A}).
\end{equation}  
\par
\begin{proposition}  
-i) The relation (23) holds in  $K_{1}(\mathit{A}_{\mu})$ and $K_{1}^{loc}(\mathit{A})$. 
\par
-ii) The {\em local} algebraic $K_{i}(\mathit{A}_{\mu})$ and   $K_{i}(\mathit{A}^{loc})$ groups are Morita invariant.
\end{proposition}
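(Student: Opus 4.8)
The plan is to prove the two assertions of Proposition 29 separately, each by reducing to facts already established in the preceding sections.

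For part (i), the claim is that the identity $[A+B]=[AB]$ holds in $K_1(\mathit{A}_\mu)$ (and hence, passing to the projective limit, in $K_1^{loc}(\mathit{A})$). First I would observe that the matrix identity (31) displayed just before the statement is valid over any ring with unit, in particular over $\mathit{A}_\mu$, and that all three matrices appearing on the right-hand side have entries in $\mathit{A}_\mu$ together with their inverses; so the identity lives in $\mathbb{GL}_{2n}(\mathit{A}_\mu)$. Next I would note that the second factor on the right is precisely $\mathcal{O}(B^{-1})$, which by Definition 17 belongs to $\mathbb{O}_{2n}(\mathit{A}_\mu)$; by Proposition 24(iii) it represents the zero element, so $[\,B^{-1}\oplus B\,]=0$ in $K_1(\mathit{A}_\mu)$. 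Combining this with Proposition 25(i) (additivity of the class map), the identity (31) gives $[A\oplus B]=[AB\oplus 1]+[B^{-1}\oplus B]=[AB]$, which is exactly (23). The passage to $K_1^{loc}(\mathit{A})=\projlim_\mu K_1(\mathit{A}_\mu)$ is automatic since the identity is compatible with the structure maps of the projective system (it is the same matrix identity at every level $\mu$, with the loss of two filtration orders absorbed harmlessly). For the $K_0$ analogue of (i) — if that is intended — one argues the same way using the $2\times 2$ block conjugation already recorded in the proof of Proposition 13.

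For part (ii), Morita invariance, I would use the standard argument that the functors $\mathbb{M}_k(-)$ do not change $K$-theory, adapted to the localised setting. The key point is that a localised algebra structure $\{\mathit{A}_\mu\}$ on $\mathit{A}$ induces one on $\mathbb{M}_k(\mathit{A})$ by setting $(\mathbb{M}_k(\mathit{A}))_\mu := \mathbb{M}_k(\mathit{A}_\mu)$, and that Axioms 1–4 of Definition 3 are inherited (Axiom 4 because matrix multiplication is built from finitely many products and sums in $\mathit{A}$, so the filtration-order bookkeeping goes through with the same $\mathit{Min}(\mu,\mu')-1$). Then the natural identification $\mathbb{M}_n(\mathbb{M}_k(\mathit{A}_\mu))\cong\mathbb{M}_{nk}(\mathit{A}_\mu)$ is an isomorphism of localised algebras, and it carries idempotents to idempotents, invertibles to invertibles, local conjugation to local conjugation, and the distinguished submonoid $\mathbb{O}$ to $\mathbb{O}$; it is also compatible with stabilisation up to cofinality of the subsystem indexed by multiples of $k$. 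Taking Grothendieck completions, quotients by $\sim_s,\sim_l,\sim_{\mathbb{O}_\mu}$, and projective limits — all of which were shown in §6 to be functorial for localised homomorphisms (Definition 5) — yields $K_i^{loc}(\mathbb{M}_k(\mathit{A}))\cong K_i^{loc}(\mathit{A})$ for $i=0,1$, and likewise at each finite level $\mathit{A}_\mu$.

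The main obstacle I anticipate is the cofinality/stabilisation step in part (ii): one must check that restricting the direct system $\{\mathbb{M}_n(\mathit{A}_\mu)\}_n$ to the subsystem $\{\mathbb{M}_{nk}(\mathit{A}_\mu)\}_n$ does not alter the injective limit after imposing $\sim_l$ and $\sim_{\mathbb{O}_\mu}$, and that this is uniform in $\mu$ so as to survive the projective limit. This is where the loss of two filtration orders in formulas (10) and (20) must be invoked carefully, to guarantee that the local conjugations realising cofinality (which may raise matrix size and thus require an invertible in a larger block) still lie in the appropriate $\mathbb{GL}(\mathit{A}_{\mu})$ rather than only in $\mathbb{GL}(\mathit{A}_{\mu-\text{something}})$. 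Everything else is a routine transport of structure along the isomorphism $\mathbb{M}_n(\mathbb{M}_k(-))\cong\mathbb{M}_{nk}(-)$.
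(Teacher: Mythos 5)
Your treatment of part (i) addresses the wrong formula. In the paper's numbering, (23) is the relation $-[u]=[u^{-1}]\in K_{1}(\mathit{A}_{\mu})$; the Whitehead-type identity $[A+B]=[AB]$ that you set out to prove is formula (31), which the paper records only for the \emph{classical} group $K_{1}(\mathit{A})$ (there it follows because $\mathcal{O}(B)$ lies in the commutator subgroup). The actual content of Proposition 26.(i) is that $-[u]=[u^{-1}]$, already established for each $K_{1}(\mathit{A}_{\mu})$ in the preceding Proposition, survives the passage to the projective limit $K_{1}^{loc}(\mathit{A})$; the paper's one-line proof says exactly this and nothing more.

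Beyond the misidentification, the argument you give has an internal gap and aims at a statement that is in fact false in the local theory. Identity (30) is a \emph{product} decomposition, $A\oplus B=(AB\oplus 1)\cdot\mathcal{O}(B^{-1})$. To pass from this to $[A\oplus B]=[AB\oplus 1]+[\mathcal{O}(B^{-1})]$ you invoke ``additivity of the class map,'' but the additivity established earlier concerns direct sums, $[u\oplus v]=[u]+[v]$; it says nothing about products $uv$. The missing step (that the class map turns matrix products into sums) is precisely the Whitehead phenomenon, and it is \emph{not} available here: the whole point of quotienting by the additive sub-semi-group $\mathbb{O}(\mathit{A}_{\mu})$ rather than by $[\mathbb{GL},\mathbb{GL}]$ is that $K_{1}(\mathit{A}_{\mu})$ retains the abstract Jordan form. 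Concretely, for $\mathit{A}=\mathbb{C}$ with the trivial filtration, every $\xi\in\mathbb{O}(\mathbb{C})$ satisfies $m_{\xi}(\lambda)=m_{\xi}(\lambda^{-1})$ (eigenvalue multiplicities), so $m(\lambda)-m(\lambda^{-1})$ descends to an invariant of $K_{1}$-classes; it takes the value $1$ at $\lambda=2$ on $\mathrm{diag}(2,3)$ and $0$ on $\mathrm{diag}(6,1)$, so $[A+B]\neq[AB]$ in general. So your step $[A\oplus B]=[AB]$ assumes what would have to be proved, and what would have to be proved is not true.

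For part (ii) the paper gives no argument at all, so there is nothing to compare against. Your transport-of-structure sketch along $\mathbb{M}_{n}(\mathbb{M}_{k}(\mathit{A}_{\mu}))\cong\mathbb{M}_{nk}(\mathit{A}_{\mu})$, after equipping $\mathbb{M}_{k}(\mathit{A})$ with the filtration $\mathbb{M}_{k}(\mathit{A}_{\mu})$, is a reasonable route, and the cofinality and filtration-shift bookkeeping you flag is indeed where the real work would be; the paper leaves that entirely to the reader.
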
    
\begin{proof}  
The projective limit with respect to $\mu$ assures the formula (23) remains valid in $K_{1}^{loc}(\mathit{A})$.  
\end{proof}    
\section{Induced homomorphisms.}   
 \begin{definition}   
Let $f:  \mathit{A} \longrightarrow  \mathit{B}$  be a localised algebra homomorphism.
\par
 Then the homomorphism $f_{\ast}$ (see Definition 5) induces homomorphisms
\begin{equation}   
f_{\ast}:  K_{0} (\mathit{A}_{\mu})   \longrightarrow  K_{0} (\mathit{B}_{\mu}),  \hspace{0.5cm}
f_{\ast}:  K_{0}^{loc} (\mathit{A})  \longrightarrow  K_{0}^{loc} (\mathit{B})
\end{equation}  
and
\begin{equation}   
f_{\ast}:  K_{0} (\mathit{A}_{\mu})   \longrightarrow  K_{0} (\mathit{B}_{\mu}),  \hspace{0.5cm}
f_{\ast}:  K_{1}^{loc} (\mathit{A})  \longrightarrow  K_{1}^{loc} (\mathit{B})
\end{equation}  
 \end{definition}   
\section{Constructing idempotents and invertible matrices over $\Lambda_{\mu}$.}  
We come back to the situation presented in \S  5 - Mayer-Vietoris diagrams. 
\subsection{Constructing idempotents over $\Lambda_{\mu}$.} 
\begin{theorem} {\bf Definition}.  
Suppose $p_{1} \in \mathbb{I}demp_{n}(\Lambda_{1, \mu})$, resp.  $p_{2} \in \mathbb{I}demp_{n}(\Lambda_{2, \mu})$,  is an idempotent matrix with entries in
$\Lambda_{1, \mu}$, resp. $\Lambda_{2, \mu}$, such that
\begin{equation}  
j_{1\ast} (p_{1}) =  u \;  j_{2\ast} (p_{2}) \; u^{-1},
\end{equation}  
where $u \in \mathbb{GL}_{n}(\Lambda'_{\mu})$ is an invertible matrix.
\par
-i) Then there exists an idempotent double matrix $p \in \mathbb{I}demp_{2n}(\Lambda_{\mu})$ such that
\begin{equation}  
i_{1\ast} (p) = p_{1} \oplus 0_{n}  \;\;  and  \;\; \; i_{2\ast} (p) = \tilde{p}_{2} 
\end{equation}  
where the idempotent $\tilde{p}_{2} \in M_{2n}(\Lambda_{2, \mu})$ is conjugated to $p_{2} \oplus 0_{n}$ through an invertible matrix $\tilde{U} \in GL_{2n} (\Lambda_{2, \mu})$, that is  $\tilde{p}_{2} = \tilde{U} p_{2}  \tilde{U}^{-1}$.
\par
The corresponding double matrix idempotent $p$ is denoted $p =(p_{1}, p_{2}, u)$. 
\par
-ii) 
\begin{equation}   
j_{2, \ast} \tilde{p}_{2} = (u \; j_{2, \ast} (p_{2} ) \; u^{-1}) \oplus 0_{n} = j_{1, \ast} ({p}_{1} \oplus 0_{n})
\end{equation}   
\par
-iii) Any idempotent matrix $p \in \mathbb{M}_{2n} (\Lambda_{\mu})$  is conjugated through a localised inner automorphism 
defined by a matrix $U \in \mathbb{GL}_{2n}(\Lambda_{\mu})$  to
an idempotent matrix of the form $p =(p_{1}, p_{2}, u)$ defined by -i).
\end{theorem}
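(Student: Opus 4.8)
\medskip

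\noindent\textbf{Proof proposal.} The plan is to treat this as the localised, filtered counterpart of Milnor's patching construction for projective modules over a Cartesian square of rings (\cite{Milnor}, \S 2), but carried out entirely with idempotent matrices. The argument rests on three things: the Whitehead identity, which writes $\mathcal{O}(u)=u\oplus u^{-1}$ as a product of a \emph{bounded} number (at most six) of elementary matrices; the lifting of an elementary matrix through a surjective localised homomorphism, which costs only a bounded number of multiplications, hence only a bounded loss of filtration degrees of the kind this theory is built to absorb; and the uniqueness built into Hypothesis 2, which realises $\mathbb{M}_{2n}(\Lambda_{\mu})$ as the ring of $j$-compatible pairs of matrices over $\Lambda_{1,\mu}$ and $\Lambda_{2,\mu}$. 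Throughout I suppress the bounded filtration shifts.

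\medskip

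\noindent\emph{Parts -i) and -ii).} First I would use Hypothesis 3 to reduce, interchanging $\Lambda_{1}\leftrightarrow\Lambda_{2}$ and $u\leftrightarrow u^{-1}$ if necessary, to the case in which $j_{2}$ is surjective. Then I would invoke the Whitehead factorisation
\begin{equation*}
\begin{pmatrix} u & 0\\ 0 & u^{-1}\end{pmatrix}
=\begin{pmatrix} 1 & u\\ 0 & 1\end{pmatrix}
\begin{pmatrix} 1 & 0\\ -u^{-1} & 1\end{pmatrix}
\begin{pmatrix} 1 & u\\ 0 & 1\end{pmatrix}
\begin{pmatrix} 0 & -1\\ 1 & 0\end{pmatrix},
\end{equation*}
together with the expression of $\left(\begin{smallmatrix} 0 & -1\\ 1 & 0\end{smallmatrix}\right)$ as a product of three elementary matrices, to write $\mathcal{O}(u)\in\mathbb{GL}_{2n}(\Lambda'_{\mu})$ as a product of at most six elementary matrices with entries in $\Lambda'_{\mu}$. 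Lifting each such elementary matrix through the surjection $j_{2}\colon\Lambda_{2,\mu}\to\Lambda'_{\mu}$ (choosing a preimage of each off-diagonal entry) and multiplying the lifts gives $\tilde U\in\mathbb{GL}_{2n}(\Lambda_{2,\mu})$ with $j_{2\ast}(\tilde U)=u\oplus u^{-1}$. Putting $\tilde p_{2}:=\tilde U\,(p_{2}\oplus 0_{n})\,\tilde U^{-1}$, one computes
\begin{equation*}
j_{2\ast}(\tilde p_{2})=(u\oplus u^{-1})\bigl(j_{2\ast}(p_{2})\oplus 0_{n}\bigr)(u^{-1}\oplus u)=\bigl(u\,j_{2\ast}(p_{2})\,u^{-1}\bigr)\oplus 0_{n}=j_{1\ast}(p_{1})\oplus 0_{n}=j_{1\ast}(p_{1}\oplus 0_{n}),
\end{equation*}
which is assertion -ii) and shows that $p_{1}\oplus 0_{n}$ and $\tilde p_{2}$ are $j$-compatible. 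By Hypothesis 2 there is then a unique $p\in\mathbb{M}_{2n}(\Lambda_{\mu})$ with $i_{1\ast}(p)=p_{1}\oplus 0_{n}$ and $i_{2\ast}(p)=\tilde p_{2}$. Since $i_{1\ast},i_{2\ast}$ are ring homomorphisms, $i_{1\ast}(p^{2})=i_{1\ast}(p)$ and $i_{2\ast}(p^{2})=i_{2\ast}(p)$, so the uniqueness clause forces $p^{2}=p$; this $p$ is the double matrix $(p_{1},p_{2},u)$.

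\medskip

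\noindent\emph{Part -iii).} Given $p\in\mathbb{I}demp_{2n}(\Lambda_{\mu})$, I would write $p=(M_{1},M_{2})$ with $M_{i}=i_{i\ast}(p)$ and $j_{1\ast}(M_{1})=j_{2\ast}(M_{2})$, and then conjugate $p$, by a \emph{double} matrix $V=(V_{1},V_{2})\in\mathbb{GL}_{2n}(\Lambda_{\mu})$, into the normal form produced by -i). Concretely: choose $V_{1}\in\mathbb{GL}(\Lambda_{1,\mu})$ with $V_{1}M_{1}V_{1}^{-1}=p_{1}\oplus 0$; arrange $j_{1\ast}(V_{1})$ in a form that lifts through $j_{2}$ (up to a bounded number of elementary factors), and lift it to $V_{2}\in\mathbb{GL}(\Lambda_{2,\mu})$, so that $V$ is a double matrix; then $V_{2}M_{2}V_{2}^{-1}$ is automatically of the shape $\tilde U(p_{2}\oplus 0)\tilde U^{-1}$, and the gluing $u$ is read off, modulo the relevant conjugation, from the class of $j_{1\ast}(V_{1})\,j_{2\ast}(V_{2})^{-1}$ over $\Lambda'_{\mu}$; this displays $VpV^{-1}$ in the form $(p_{1},p_{2},u)$. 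The hard part will be the very first step: an idempotent $M_{1}$ of a \emph{fixed} size $2n$ over $\Lambda_{1,\mu}$ need not be conjugate to a block idempotent $p_{1}\oplus 0_{n}$ — this holds precisely when the complementary idempotent $1-M_{1}$ carries a free direct summand of rank $n$ — and, simultaneously, the conjugation chosen on the $\Lambda_{1}$-side must agree over $\Lambda'_{\mu}$ with one available on the $\Lambda_{2}$-side, so as to descend to a single inner automorphism over $\Lambda_{\mu}$. I expect both obstructions to disappear after a stabilisation: replacing $p$ by $p\oplus 0_{2n}$, which represents the same element of the inductive limit $\mathbb{I}demp(\Lambda_{\mu})$, makes the complement acquire the needed free summand, and the extra matrix room lets the normalisation be realised by elementary — hence liftable, hence double — conjugations, exactly as in -i). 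This stabilisation-plus-bookkeeping step is the one I expect to cost the most work.
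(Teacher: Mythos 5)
Your proof of parts -i) and -ii) is essentially the paper's own argument. The paper splits it into Lemma 28 (the easy case where $u$ already lifts invertibly over $\Lambda_{2,\mu}$, producing an idempotent of the same size $n$) and Lemma 29 (the general case, doubling the size and replacing $u$ by $U=u\oplus u^{-1}$, then invoking the Whitehead-type decomposition of formula~(48) to lift $U$ factor by factor), and Theorem 27.-i), -ii) are read off from Lemma 29. Your outline compresses both lemmas into one pass. The one small divergence is your treatment of the terminal block $\left(\begin{smallmatrix}0&-1\\1&0\end{smallmatrix}\right)$: you propose to further decompose it into three elementary matrices and lift each, whereas the paper observes that this factor is a \emph{scalar} matrix and, by Axiom~3 ($\mathbb{C}\cdot 1\subset\Lambda_{2,\mu}$ for every $\mu$), ``lifts as it is.'' Both are correct, but the paper's choice is strictly cheaper in the filtration bookkeeping that this theory is designed to control — each extra elementary factor costs a multiplication, i.e.\ a filtration degree — and since you explicitly flag the bounded-multiplication constraint as the raison d'\^etre of the construction, you should keep the permutation block intact rather than expanding it.

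For part -iii) your proposal correctly identifies a real difficulty that the paper does not resolve. The paper's stated proof, ``Theorem 27 follows from Lemma 28 combined with Lemma 29,'' covers -i) and -ii) only; neither lemma establishes the converse classification in -iii). And your objection is genuine: an idempotent $i_{1\ast}(p)\in\mathbb{I}demp_{2n}(\Lambda_{1,\mu})$ of \emph{fixed} size $2n$ need not be $\Lambda_{1,\mu}$-conjugate to a block idempotent of the shape $p_{1}\oplus 0_{n}$ (already for $\Lambda_{1}=\mathbb{C}$, the identity $1_{2n}$ is not), so the normal form of -i) is unreachable without stabilisation. Beyond that, even after stabilising, your sketch has a gap you are candid about but do not close: after choosing $V_{1}$ to bring $M_{1}$ into block form, you need $j_{1\ast}(V_{1})$ — an \emph{arbitrary} invertible over $\Lambda'_{\mu}$, not one of the special form $u\oplus u^{-1}$ — to lift invertibly through $j_{2}$, and the Whitehead trick does not apply to it. The standard fix (replace $V_{1}$ by $V_{1}\oplus V_{1}^{-1}$ so the image over $\Lambda'_{\mu}$ does have the liftable shape, at the cost of another doubling) is what Lemma~29 is built for; if you intend to prove -iii) you need to spell out that this doubling is performed and then show the resulting conjugating pair is a genuine double matrix over $\Lambda_{\mu}$. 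As written, -iii) is not proved in the paper and is only partially addressed in your proposal; your identification of the obstruction is nonetheless more careful than the source.
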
 
Condition (34) says that $[j_{1 \ast }p_{1} ] = [j_{ \ast }p_{2} ] \in K_{0} (\Lambda'_{\mu})$.
Part -i) says that the pair  $([p_{1}], [\tilde{p}_{2}]) \in K_{0}(\Lambda_{1})  \oplus K_{0}(\Lambda_{2}) $ belongs to the image of
$(i_{1 \ast},  i_{2 \ast})$. This property is part of the proof of Theorem 51.-i) on exactness.
\par
\begin{proof}  
All considerations made to prove this theorem will be performed onto objects related to $\Lambda_{2, \mu}$.
\begin{lemma}  
Let $p_{1} = ( a_{ij} ) \in \mathbb{I}demp_{n} (\Lambda_{1, \mu})$  and $p_{2} = ( b_{ij} ) \in \mathbb{I}demp_{n} (\Lambda_{2, \mu})$ be idempotents.
\par
Suppose the idempotents $ j_{1\ast} (p_{1})$,  $ j_{2\ast} (p_{2}) $ are conjugate through an inner automorphism defined by
$u \in \mathbb{GL}_{n} (\Lambda'_{\mu})$, i.e.
\begin{equation}   
j_{1\ast} (p_{1}) =  u \;  j_{2\ast} (p_{2}) \; u^{-1}.
\end{equation}  
Assume, additionally, that the invertible element $u$ lifts to an invertible element 
$\tilde{u} \in \mathbb{GL}_{n}(\Lambda_{2, \mu})$ 
(i.e. $j_{2\ast} \tilde{u} = u$).
\par
Then $p =(p_{1}, p^{'}_{2}, u)   \in  \mathbb{I}demp_{n} (\Lambda \mu)  $ is an idempotent given by the double matrix
\begin{equation}  
p = ( (a_{ij}, c_{ij})) ,
\end{equation}   
where
\begin{equation}   
 (a_{ij}) = p_{1} \in \mathbb{I}demp_{n}(\Lambda_{1, \mu}) \;\; and \;\; p^{'}_{2} = ( c_{ij} )  := 
 \tilde{u} \;   p_{2} \; \tilde{u}^{-1} \in \mathbb{I}demp_{n}  ( \Lambda_{2,  \mu}).
\end{equation}   
\end{lemma}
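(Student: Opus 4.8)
\emph{Proof plan.} The idea is to exhibit $p$ directly as the unique double matrix over $\Lambda_{\mu}$ attached, via the localised‑product structure of the Mayer--Vietoris diagram (Hypothesis 2), to the matching pair $(p_{1},p'_{2})$; once the matching over $\Lambda'_{\mu}$ is checked, everything reduces to componentwise algebra, and the only point requiring care is the filtration bookkeeping, which is exactly what the lifting hypothesis on $u$ is there to handle.

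First I would record that $p'_{2}:=\tilde{u}\,p_{2}\,\tilde{u}^{-1}$ is an idempotent with entries in $\Lambda_{2,\mu}$. Idempotency is immediate from $p_{2}^{2}=p_{2}$:
\[
(p'_{2})^{2}\;=\;\tilde{u}\,p_{2}\,\tilde{u}^{-1}\tilde{u}\,p_{2}\,\tilde{u}^{-1}\;=\;\tilde{u}\,p_{2}^{2}\,\tilde{u}^{-1}\;=\;\tilde{u}\,p_{2}\,\tilde{u}^{-1}\;=\;p'_{2},
\]
and the membership $p'_{2}\in\mathbb{M}_{n}(\Lambda_{2,\mu})$ uses precisely the assumption that $u$ lifts to $\tilde{u}\in\mathbb{GL}_{n}(\Lambda_{2,\mu})$, so that $\tilde{u}$ and $\tilde{u}^{-1}$ have entries in $\Lambda_{2,\mu}$. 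Next I would compute the image of $p'_{2}$ over $\Lambda'_{\mu}$: since $j_{2\ast}$ is a ring homomorphism and $j_{2\ast}\tilde{u}=u$,
\[
j_{2\ast}(p'_{2})\;=\;j_{2\ast}(\tilde{u})\,j_{2\ast}(p_{2})\,j_{2\ast}(\tilde{u})^{-1}\;=\;u\,j_{2\ast}(p_{2})\,u^{-1},
\]
which by the standing hypothesis $j_{1\ast}(p_{1})=u\,j_{2\ast}(p_{2})\,u^{-1}$ equals $j_{1\ast}(p_{1})$. Thus the pair $(p_{1},p'_{2})\in\mathbb{M}_{n}(\Lambda_{1,\mu})\times\mathbb{M}_{n}(\Lambda_{2,\mu})$ satisfies $j_{1\ast}p_{1}=j_{2\ast}p'_{2}$, entry by entry.

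Then I would invoke Hypothesis 2 one matrix entry at a time. Writing $p_{1}=(a_{ij})$ with $a_{ij}\in\Lambda_{1,\mu}$ and $p'_{2}=(c_{ij})$ with $c_{ij}\in\Lambda_{2,\mu}$, the equality just established says $j_{1}(a_{ij})=j_{2}(c_{ij})$ in $\Lambda'_{\mu}$ for all $i,j$; hence the localised‑product property furnishes, for each $(i,j)$, a unique element of $\Lambda_{\mu}$, which I denote $(a_{ij},c_{ij})$. Collecting these entries defines the double matrix $p=((a_{ij},c_{ij}))\in\mathbb{M}_{n}(\Lambda_{\mu})$ satisfying $i_{1\ast}(p)=p_{1}$ and $i_{2\ast}(p)=p'_{2}$ --- this is the matrix written $p=(p_{1},p'_{2},u)$ in the notation of Theorem 27. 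Finally, since by equation $(7)$ and Remark 8 the ring operations on double matrices are performed componentwise, $p\cdot p=(p_{1}p_{1},\,p'_{2}p'_{2})=(p_{1},p'_{2})=p$, so $p\in\mathbb{I}demp_{n}(\Lambda_{\mu})$, which is the assertion.

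I expect the main obstacle to be not any individual identity but the filtration control: matrix multiplication and conjugation are not filtration‑neutral (Axiom 4), so a lift of $u$ to an invertible matrix merely over $\Lambda'_{\mu}$ would not suffice to locate $p'_{2}$ --- and hence $p$ --- at level $\mu$. The lemma therefore deliberately isolates the favourable case in which $u$ lifts all the way to $\mathbb{GL}_{n}(\Lambda_{2,\mu})$, so that the fibered‑product argument goes through at a fixed level. The general situation of Theorem 27, where no such lift exists, is precisely what forces the passage to $\mathcal{O}(u)$‑type matrices of size $2n$ and, downstream, the tensoring by $\mathbb{Z}[\frac{1}{2}]$ in the six‑term exact sequence.
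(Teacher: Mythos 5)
Your argument is correct and follows essentially the same route as the paper's proof of Lemma~28: you verify idempotency of $p'_{2}=\tilde{u}\,p_{2}\,\tilde{u}^{-1}$ and the matching condition $j_{1\ast}(p_{1})=j_{2\ast}(p'_{2})$, then use the localised‑product structure (Hypothesis~2) and the componentwise operations of Remark~8 to conclude; the paper does exactly this, only in the reverse order (squaring first, matching second). Your closing remarks on filtration are broadly in the right spirit, though note the paper itself is silent about the two‑level drop ($\Lambda_{2,\mu}\to\Lambda_{2,\mu-2}$) incurred by the conjugation under Axiom~4, and the real issue the hypothesis on $\tilde{u}$ resolves is not filtration per se but simply the need for an invertible lift over $\Lambda_{2,\mu}$ at all, which in general fails and is what drives the size‑doubling in Lemma~29.
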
   
Remark that in this lemma the size of the double matrix $p$ does not change. 
\par
$\mathit{ Proof \; of \; Lemma} \; 28$.  
We use Remark 8.-ii).
It is clear that the matrix $p$ given by (38), (39) is an idempotent. In fact, to evaluate $p^{2}$ amounts to compute separately  the square of the first and second component matrices of the matrix $p$,  i.e. the squares of $(a_{ij})$ and $(c_{ij})$. These are 
\begin{equation}   
(a_{ij})^{2} = (a_{ij})  \;\;and\;\;
(c_{ij})^{2} =  (\; \tilde{u}  \; p_{2} \; \tilde{u}^{-1} \;)^{2} =  \tilde{u}  \; p_{2}^{2} \; \tilde{u}^{-1} = \tilde{u}  \; p_{2} \; \tilde{u}^{-1} = (c_{ij}).
\end{equation}   
\par
It remains to verify that  $p \in \mathbb{M}_{n} (\Lambda_{\mu})$, i.e.
$j_{1 \ast} (a_{ij}) = j_{2 \ast} (c_{ij}) $.  This follows from (40) combined with (37)
\begin{equation}  
j_{1\ast} (p_{1}) =  u \;  j_{2\ast} (p_{2}) \; u^{-1} = j_{2 \ast} (\tilde{u} \;  p_{2} \; \tilde{u}^{-1} ) =  j_{2 \ast} (\tilde{p}_{2}).  
\end{equation}  
This ends the proof of Lemma 28.
\begin{lemma}  
Let $p_{1} = ( a_{ij} ) \in \mathbb{I}demp_{n} (\Lambda_{1, \mu})$  and $p_{2} = ( b_{ij} ) \in \mathbb{I}demp_{n} (\Lambda_{2, \mu})$ be idempotents.
\par
Suppose the idempotents $ j_{1\ast} (p_{1})$,  $ j_{2\ast} (p_{2}) $ are conjugate through an inner automorphism defined by
$u \in \mathbb{GL}_{n} (\Lambda^{'} _{\mu})$, i.e.
\begin{equation}   
j_{1\ast} (p_{1}) =  u \;  j_{2\ast} (p_{2}) \; u^{-1}.
\end{equation}  
Then
\par
-i) $ j_{1 \ast}(p_{1} \oplus 0_{n}) $ and $ j_{2 \ast}(p_{2} \oplus 0_{n}) $ are conjugated by 
$U := u \oplus u^{-1}  \in  \mathbb{GL}_{2n}(\Lambda'_{\mu}) $,  i.e.
\begin{equation}  
 j_{1 \ast}(p_{1}) \oplus 0_{n} =
 j_{1 \ast}(p_{1} \oplus 0_{n})  = U \;  j_{2 \ast}(p_{2} \oplus 0_{n}) \; U^{-1} =  (\; u  \; j_{2 \ast}(p_{2}) \;  u^{-1} \; ) \oplus 0_{n}.
\end{equation}  
\par
-ii) Supposing that $j_{2}$ is surjective, the invertible matrix $U$ lifts to an invertible matrix $\tilde{U} \in \mathbb{M}_{2n, \mu} (\Lambda_{2})$. Let 
\begin{equation}  
\tilde{p}_{2} := \tilde{U}   \; p_{2}  \;  \tilde{U}^{-1}.
\end{equation}    
Then
\begin{gather} 
j_{1, \ast} (p_{1} \oplus 0_{n}) = (u \; j_{2, \ast} ({p}_{2}) \; u^{-1})\oplus 0_{n} = j_{2, \ast} (\tilde{p}_{2} );
\end{gather}  
i.e.  the matrices $p_{1} \oplus 0_{n}, \; \tilde{p}_{2}$ form a double matrix idempotent in 
$\mathbb{I}demp_{2n}(\Lambda_{\mu})$, denoted $p := (p_{1}, p_{2}, u)  \in  \mathbb{I}demp_{2n}(\Lambda_{\mu})  $ and
 \begin{equation}  
 (i_{1 \ast}, i_{2 \ast} ) p = (p_{1}, \tilde{p}_{2}) 
 \end{equation}  
\par
-iii) The pair of idempotents  $p_{1}  \in \mathbb{I}demp_{n} (\Lambda_{1, \mu})$,   $p_{2}  \in \mathbb{I}demp_{n} (\Lambda_{2, \mu})$  is stably equivalent to the pair of idempotents $p_{1} \oplus 0_{n}$,  ${p}_{2} \oplus 0_{n}$ and ${p}_{2} \oplus 0_{n} \sim_{l}  \tilde{p}_{2}$. In other words
 \begin{equation}  
     ([p_{1}], [{p}_{2}]) =  ([p_{1}], [\tilde{p}_{2}])   \in K_{0} (\Lambda_{1, \mu }) \oplus K_{0}(\Lambda_{2}, \mu) .
 \end{equation}   
 \end{lemma}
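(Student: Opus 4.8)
The plan is to prove the three assertions in order; the whole content is a single block computation together with the lifting trick that is the reason the symmetrised block $\mathcal{O}(u)=u\oplus u^{-1}$ is used rather than $u$ alone. For -i) there is nothing but matrices. With $U:=u\oplus u^{-1}$, and noting the entry $0_{n}$ lies in $\mathbb{C}\,.\,1\subset\Lambda'_{\mu}$ by Axiom 3 while $u,u^{-1}$ already have entries in $\Lambda'_{\mu}$, one computes over $\Lambda'_{\mu}$
\[
U\,\bigl(j_{2\ast}(p_{2})\oplus 0_{n}\bigr)\,U^{-1}
=\bigl(u\,j_{2\ast}(p_{2})\,u^{-1}\bigr)\oplus\bigl(u^{-1}\,0_{n}\,u\bigr)
=\bigl(u\,j_{2\ast}(p_{2})\,u^{-1}\bigr)\oplus 0_{n},
\]
and by hypothesis (42) the right-hand side equals $j_{1\ast}(p_{1})\oplus 0_{n}=j_{1\ast}(p_{1}\oplus 0_{n})$. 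Hence $j_{1\ast}(p_{1}\oplus 0_{n})$ and $j_{2\ast}(p_{2}\oplus 0_{n})$ are locally conjugate in $\mathbb{I}demp_{2n}(\Lambda'_{\mu})$ via $U$, which is (43).

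For -ii) the crucial point — exactly where the construction profits from symmetrising $u$ — is that, although $u$ itself need not lift to an invertible matrix over $\Lambda_{2}$, the block $U=u\oplus u^{-1}$ always does, by the classical Whitehead factorisation. Using the assumed surjectivity of $j_{2}$ in its localised form (so that $u$ and $u^{-1}$, having entries in $\Lambda'_{\mu}$, admit lifts $\tilde u,\tilde v\in\mathbb{M}_{n}(\Lambda_{2,\mu})$ with $j_{2\ast}\tilde u=u$, $j_{2\ast}\tilde v=u^{-1}$, not required to be mutually inverse) and placing the scalar permutation block $w=\left(\begin{smallmatrix}0&-1\\1&0\end{smallmatrix}\right)$ over $\Lambda_{2,\mu}$ by Axiom 3, I set
\[
\tilde U:=\begin{pmatrix}1&\tilde u\\0&1\end{pmatrix}\begin{pmatrix}1&0\\-\tilde v&1\end{pmatrix}\begin{pmatrix}1&\tilde u\\0&1\end{pmatrix}w .
\]
Each of the four factors is invertible over $\Lambda_{2,\mu}$ (the triangular ones with explicit triangular inverses, and $w^{-1}=-w$), so $\tilde U$ is invertible; applying $j_{2\ast}$, which commutes with matrix multiplication, turns $\tilde U$ into the Whitehead product $\left(\begin{smallmatrix}1&u\\0&1\end{smallmatrix}\right)\left(\begin{smallmatrix}1&0\\-u^{-1}&1\end{smallmatrix}\right)\left(\begin{smallmatrix}1&u\\0&1\end{smallmatrix}\right)w$, which collapses to $u\oplus u^{-1}=U$, so $j_{2\ast}\tilde U=U$. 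Put $\tilde p_{2}:=\tilde U\,(p_{2}\oplus 0_{n})\,\tilde U^{-1}$; this is an idempotent by the computation in (40), and
\[
j_{2\ast}\tilde p_{2}=U\,\bigl(j_{2\ast}(p_{2})\oplus 0_{n}\bigr)\,U^{-1}=j_{1\ast}(p_{1}\oplus 0_{n})
\]
by part -i). Therefore $(p_{1}\oplus 0_{n},\tilde p_{2})$ satisfies the double-matrix compatibility condition of Remark 8, so it is a genuine idempotent double matrix $p=(p_{1},p_{2},u)\in\mathbb{I}demp_{2n}(\Lambda_{\mu})$ with $(i_{1\ast},i_{2\ast})p=(p_{1}\oplus 0_{n},\tilde p_{2})$, which is (46); and (45) is the displayed chain of equalities. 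For -iii), stabilisation gives $p_{i}\sim_{s}p_{i}\oplus 0_{n}$ for $i=1,2$, while $\tilde p_{2}=\tilde U\,(p_{2}\oplus 0_{n})\,\tilde U^{-1}$ gives $p_{2}\oplus 0_{n}\sim_{l}\tilde p_{2}$; combining, $[p_{2}]=[p_{2}\oplus 0_{n}]=[\tilde p_{2}]$ in $K_{0}(\Lambda_{2,\mu})$, and nothing changes in the first slot, so $([p_{1}],[p_{2}])=([p_{1}],[\tilde p_{2}])$, which is (47).

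The step I expect to be genuinely delicate is the filtration bookkeeping hidden inside -ii). Because matrix multiplication degrades the filtration under Axiom 4 ($\Lambda_{2,\mu}\,.\,\Lambda_{2,\mu}\subset\Lambda_{2,\mu-1}$), the product defining $\tilde U$ a priori lies only in $\mathbb{GL}_{2n}(\Lambda_{2,\mu-c})$, and $\tilde p_{2}=\tilde U(p_{2}\oplus 0_{n})\tilde U^{-1}$ only in $\mathbb{I}demp_{2n}(\Lambda_{2,\mu-c'})$, for small fixed $c,c'$ (coming from the triple product $\tilde u\tilde v\tilde u$ and then the conjugation). One must therefore either arrange the Whitehead factorisation so as to use as few multiplications as possible, or — as is really intended — absorb this bounded shift into the loss of filtration orders already built into the definitions of $K_{0}(\mathit{A}_{\mu})$ and $K_{0}^{loc}(\mathit{A})$, so that the displayed equalities in -ii) and -iii) hold verbatim after passing to $K_{0}(\Lambda_{\ast})$ and to the projective limit over $\mu$. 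A second, more routine point is that surjectivity of $j_{2}$ has to be used in its localised form: plain surjectivity of $j_{2}\colon\Lambda_{2}\to\Lambda'$ would yield lifts in $\mathbb{M}_{n}(\Lambda_{2})$ with no control on the filtration index, whereas the argument needs $\tilde u,\tilde v$ at level $\mu$.
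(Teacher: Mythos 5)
Your proof follows essentially the same route as the paper's: part -i) by direct block computation, part -ii) by the Whitehead factorisation of $U=u\oplus u^{-1}$ into three elementary matrices times a scalar permutation matrix (the paper's formula (48)), lifting each factor via surjectivity of $j_{2}$, and then conjugating $p_{2}\oplus 0_{n}$; part -iii) by stabilisation and local conjugation as in the definition of $K_{0}$. Your closing remark about the filtration bookkeeping is a genuine refinement the paper glosses over — under Axiom 4 the product $\tilde U$ of four matrices with entries in $\Lambda_{2,\mu}$ actually only lands in $\mathbb{GL}_{2n}(\Lambda_{2,\mu-c})$ for a small fixed $c$, and the resulting bounded shift is indeed what the drop of two filtration levels built into the definition of $K_{0}(\mathit{A}_{\mu})$ and the projective limit are there to absorb.
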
   
 Note that in this lemma, in comparison with the preceding Lemma  28, the size of the desired idempotent doubles; otherwise,
 the important modifications still occur onto matrices associated with $\Lambda_{2, \mu}$. 
 \par
 $\mathit{Proof \; of \; Lemma \; 29}$.   
 Part -i) is clear.
 \par
 The proof of -ii) uses $\mathcal{O}_{2n}(u)$, where $u \in \mathbb{GL}_{n}(\mathit{A}_{\mu})$, 
 Definition 15. The next formula  provides 
 a decomposition of  $\mathcal{O}_{2n}(u)$ in a product of elementary matrices and a scalar matrix
\begin{equation}  
U := \mathcal{O}_{2n}(u) = 
\begin{pmatrix}
u & 0 \\
0 & u^{-1}
\end{pmatrix}
=
\begin{pmatrix}
1 & u \\
0 & 1
\end{pmatrix}
\begin{pmatrix}
1 & 0 \\
- u^{-1} & 1
\end{pmatrix}
\begin{pmatrix}
1 & u \\
0 & 1
\end{pmatrix}
\begin{pmatrix}
0 & -1 \\
1 & 0
\end{pmatrix}
\in \mathbb{M}_{2n}(\Lambda_{2, \mu}).
\end{equation}    
\par
To complete the proof we need to show that the invertible matrix $U$ has an invertible lifting $\tilde{U} \in \Lambda_{2, \mu}$. This follows from the properties of elementary matrices, discussed next.
\begin{definition}  Elementary Matrices. 
\par
A matrix $E_{ij} (a) \in \mathit{GL}_{n}(\mathit{A}_{\mu}) $ having all entries equal to zero, except for the diagonal entries equal to $1$ and just one $(i,j)$-entry $a \in \mathit{A}_{\mu}$,  $0 \leq i \neq j \leq n$  is called  {\em elementary matrix with entry in} $\mathit{A}_{\mu}$. 
\par
The space of {\em elementary matrices with entries in} $\mathit{A}_{\mu}$ is by definition 
\begin{equation}  
\mathit{E}_{n} (\mathit{A}_{\mu}) \; := \{  E_{ij} (a) \; | \; 1 \leq i \neq j \leq n, \; a \in \mathit{A}_{\mu} \}.
\end{equation}   
Let $\mathbb{E}_{n} (\mathit{A})$ the sub-group generated by all elementary matrices and
\begin{equation}   
\mathbb{E}(\mathit{A}) := \injlim_{n \in \mathbb{N}} \mathbb{E}_{n} (\mathit{A}).
\end{equation}    
\end{definition}  
\par
\begin{lemma}  
The elementary matrices satisfy
\begin{equation}   
E_{i,j}(a) . E_{i,j}(b)  \; = \; E_{i,j}(a+b)  
\end{equation}  
\begin{equation}  
E_{i,j}(a)^{-1}  \; = \; E_{i,j}(-a),  
\end{equation}  
therefore $E_{i,j}(a)  \in \mathit{GL} (\mathit{A}_{\mu})$,
 Any elementary matrix is a commutator
\begin{equation}  
E_{ij} (A) = [ E_{ik}(A), E_{kj}(1)],   \hspace{0.2cm}  for \; any\; i, j, k \; distinct \; indices.
\end{equation}  
\end{lemma}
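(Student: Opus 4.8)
The plan is to realise every elementary matrix in the standard form $E_{ij}(a) = 1 + a\,e_{ij}$, where $e_{ij}$ is the $n\times n$ matrix unit with a single $1$ in position $(i,j)$ ($i\neq j$) and zeros elsewhere, and then to reduce each assertion to the basic multiplication rule for matrix units over a (non-commutative) ring: $(a\,e_{ij})(b\,e_{kl}) = \delta_{jk}\,(ab)\,e_{il}$, with the left factor's coefficient appearing on the left of the product $ab$. The point that makes the first two identities instantaneous is that $i\neq j$ forces $e_{ij}e_{ij}=0$.

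First I would expand $E_{ij}(a)E_{ij}(b) = (1+a\,e_{ij})(1+b\,e_{ij}) = 1 + (a+b)\,e_{ij} + ab\,e_{ij}e_{ij} = 1 + (a+b)\,e_{ij} = E_{ij}(a+b)$. Specialising $b=-a$, which is legitimate since $\mathit{A}_{\mu}$ is a vector subspace and hence $-a\in\mathit{A}_{\mu}$, gives $E_{ij}(a)E_{ij}(-a)=E_{ij}(0)=1$, and symmetrically $E_{ij}(-a)E_{ij}(a)=1$; therefore $E_{ij}(a)^{-1}=E_{ij}(-a)$. Both $E_{ij}(a)$ and $E_{ij}(-a)$ have all entries in $\mathit{A}_{\mu}$ — the off-diagonal entry by hypothesis, the diagonal entries equal $1\in\mathbb{C}\cdot 1\subset\mathit{A}_{\mu}$ by Axiom 3 of Definition 3 — so $E_{ij}(a)\in\mathbb{GL}_n(\mathit{A}_{\mu})$, hence $E_{ij}(a)\in\mathbb{GL}(\mathit{A}_{\mu})$ after stabilisation.

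For the commutator identity with distinct indices $i,j,k$, I would compute $[E_{ik}(A),E_{kj}(1)] = E_{ik}(A)\,E_{kj}(1)\,E_{ik}(-A)\,E_{kj}(-1)$ in three successive steps, using $e_{ik}e_{kj}=e_{ij}$ and discarding every other product of matrix units that arises, each of which vanishes because distinctness of $i,j,k$ forces a mismatch of inner indices ($e_{ik}e_{ik}=0$, $e_{kj}e_{ik}=0$, $e_{ij}e_{ik}=0$, $e_{kj}e_{kj}=0$, $e_{ij}e_{kj}=0$). Concretely: $E_{ik}(A)E_{kj}(1) = 1 + A\,e_{ik} + e_{kj} + A\,e_{ij}$; right-multiplying by $E_{ik}(-A)=1-A\,e_{ik}$ kills the $A\,e_{ik}$ term and leaves $1 + e_{kj} + A\,e_{ij}$; right-multiplying by $E_{kj}(-1)=1-e_{kj}$ kills the $e_{kj}$ term and leaves $1 + A\,e_{ij} = E_{ij}(A)$.

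There is no genuinely hard step; the only care needed is the bookkeeping of which matrix-unit products survive, together with the fact that the entries of $\mathit{A}_{\mu}$ need not commute — but this causes no trouble since no two surviving terms must ever be permuted past one another. I would also note that, unlike arbitrary products of matrices over a localised algebra, these identities involve no actual algebra multiplication in any surviving term (the right-hand sides have entries $a+b$, $-a$, $A$), so Axiom 4 of Definition 3 plays no role and, in particular, no filtration order is lost: elementary matrices with entries in $\mathit{A}_{\mu}$ close up into a group inside $\mathbb{GL}_n(\mathit{A}_{\mu})$ itself.
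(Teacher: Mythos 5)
Your proof is correct and is exactly the standard matrix-unit verification; the paper states Lemma 31 without supplying a proof (treating these as well-known facts about elementary matrices), so your computation simply fills in the bookkeeping the paper leaves implicit. Your closing remark that no surviving term involves a genuine ring product (so that Axiom~4 of Definition~3 is never invoked and no filtration order is lost) is a worthwhile observation in the spirit of the paper's localisation concerns, and it is indeed consistent with how the paper later uses these elementary matrices inside $\mathbb{GL}_{2n}(\Lambda_{2,\mu})$ without dropping a filtration index.
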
   
\par
We come back to the proof of Lemma 29. -ii). 
As $j_{2}$ is surjective, each of the entries of factors of the RHS of (48) has a lifting in $\mathbb{M}_{2n}(\Lambda_{2 \mu})$ so as each of the elementary matrix factor lifts as an invertible elementary matrix.  The last factor lifts as it is. 
Therefore, $U$ has an invertible lifting $\tilde{U} \in \mathbb{GL}_{2n}(\Lambda_{2, \mu})$.   
Lemma 28 completes the proof of Lemma 29.  -ii).  
\par
Lemma 29. -iii) follows from the definition of $K^{loc}_{0}(\Lambda)$.
This completes the proof of Lemma 29.
\par
Theorem 27 follows from Lemma 28 combined with Lemma 29.
\end{proof}   
\par
Theorem 27 refers to the construction and description of idempotents over $\Lambda_{\mu}$. 
We need to extend Theorem 27 to elements of $K_{0}^{loc}(\mathit{A}),$  i.e. to formal differences of {\em local} idempotents. 
A new difficulty occurs. This has to do with the ambiguity involved in the Grothendieck construction; without special precautions, we would have to lift idempotents over $\Lambda^{'}_{\mu}$ to idempotents over  
 $\Lambda_{2, \mu}$ which, in general, could not be done.
 \par
Now we proceed.
\begin{lemma}    (compare  \cite{Milnor}  Lemma 1.1)  
\par
Let $p_{1}, p_{2} \; q_{1}, q_{2} \in \mathbb{I}demp_{n}(\mathit{A}_{\mu})$ be idempotents and let $[\;\;]$ denote their $K_{0}(\mathit{A}_{\mu})$ class.
Suppose
\begin{equation}
[p_{1}] - [p_{2}] \;=\; [q_{1}] - [q_{2}] \in K_{0}(\mathit{A}_{\mu}).
\end{equation}
\par
Then $p_{1} + q_{2}$ and $p_{2} + q_{1}$ are {\em local stably} isomorphic,  $p_{1} + q_{2} \sim_{ls} p_{2} + q_{1}$.
\end{lemma}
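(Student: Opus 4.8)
The plan is to mimic the classical proof of Milnor's Lemma 1.1, but carrying along the filtration bookkeeping and replacing "stably isomorphic" by "locally stably isomorphic." First I would unwind the hypothesis $[p_1]-[p_2]=[q_1]-[q_2]$ in $K_0(\mathit{A}_\mu)$. By the definition of the Grothendieck completion and the $\injlim$ over $n$ in Definition 14, equality of these virtual classes means there is an idempotent $r \in \mathbb{I}demp_m(\mathit{A}_{\mu+2})$ (possibly after raising the matrix size by stabilisation) such that
\begin{equation}
p_1 \oplus q_2 \oplus r \ \sim_{ls}\ p_2 \oplus q_1 \oplus r,
\end{equation}
where $\sim_{ls}$ is the equivalence relation generated by stabilisation $\sim_s$ and local conjugation $\sim_l$ (the passage from $\mathit{A}_\mu$ to $\mathit{A}_{\mu+2}$ is exactly the two-filtration-order loss built into formula (10), which guarantees $\sim_l$ is transitive in the codomain). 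So the whole content of the lemma is to absorb the auxiliary idempotent $r$: I must show it can be cancelled at the cost of only stabilisation, i.e. that $p_1\oplus q_2$ and $p_2\oplus q_1$ are already locally stably isomorphic without $r$.

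The standard trick is the \emph{complementary idempotent}: choose $N$ large enough that $r$ can be viewed as a sub-idempotent of $1_N$, and set $r' := 1_N - r \in \mathbb{I}demp_N(\mathit{A}_{\mu+2})$ (here Axiom 3, $\mathbb{C}.1 \subset \mathit{A}_\nu$, is what lets us form $1_N - r$ inside the same filtration level, and Axiom 4 controls the products that appear when one checks $r'$ is idempotent). Then $r \oplus r' \sim_l 1_N$ via the standard permutation-type invertible matrix, and in fact $r\oplus r'$ is \emph{conjugate} to $1_N$ by an explicit invertible matrix with entries and inverse-entries in $\mathit{A}_{\mu+2}$ (a product of a few elementary and scalar matrices, as in the displays used for Propositions 13 and 18 — so the number of multiplications is bounded, which the theory needs). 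Adding $r'$ to both sides of the displayed $\sim_{ls}$ relation gives
\begin{equation}
(p_1 \oplus q_2) \oplus (r \oplus r') \ \sim_{ls}\ (p_2 \oplus q_1) \oplus (r \oplus r'),
\end{equation}
and since $r\oplus r' \sim_l 1_N$ and local conjugation is compatible with direct sums (the $u_1 \oplus u_2$ argument in the discussion after Proposition 12), the right-hand summand $r\oplus r'$ may be replaced by $1_N$ on both sides. But adding $1_N$ is precisely the stabilisation inclusion of Definition 2.-i); hence $p_1 \oplus q_2 \sim_{ls} p_2 \oplus q_1$, which is the assertion $p_1+q_2 \sim_{ls} p_2+q_1$.

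The main obstacle, and the only place that needs care, is the first step: extracting from "$[p_1]-[p_2]=[q_1]-[q_2]$ in $K_0(\mathit{A}_\mu)$" a \emph{single} equality of the form $p_1\oplus q_2\oplus r \sim_{ls} p_2\oplus q_1\oplus r$ at a controlled filtration level, rather than a zig-zag chain of such relations through intermediate idempotents of unbounded matrix size. This is where the two-order shift $\mu \to \mu+2$ in Definition 14 is essential: it is designed exactly so that $\sim_l$, hence $\sim_{ls}$, is an honest transitive relation on idempotents over $\mathit{A}_{\mu+2}$, collapsing any such chain into one relation. Once that is in hand, the complementary-idempotent cancellation is formal. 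I would also note in passing that, because every conjugating matrix used is an explicit bounded-length word in elementary and scalar matrices, the resulting local stable isomorphism respects the filtration data in the way required for the subsequent development (Theorem 51).
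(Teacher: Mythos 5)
Your proof is correct and follows essentially the same route as the paper's: unwind the Grothendieck-completion hypothesis to get a single idempotent $s$ (your $r$) with $p_1\oplus q_2\oplus s$ locally conjugate to $p_2\oplus q_1\oplus s$, then cancel $s$ by adding its complement $1_m - s$ and noting that $s\oplus(1_m-s)$ is conjugate (by an explicit bounded-length invertible) to a trivial idempotent, so the extra summand is absorbed by stabilisation. You spell out the $\mu\to\mu+2$ filtration shift and the transitivity of $\sim_l$ more carefully than the paper does, but the underlying complementary-idempotent argument is the same.
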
   
\begin{proof} 
As said above, the description of  $K_{0}(\mathit{A}_{\mu})$ classes in terms of generators (idempotents) contains an ambiguity due to the Grothendieck completion construction and stabilisation. The Grothendieck completion implies that there exists an idempotent $s \in  \mathbb{I}demp_{m}(\mathit{A}_{\mu})$ such that the idempotents 
$$p_{1} + q_{2} + s,   \;\;\;\;  p_{2} + q_{1}+ s$$ 
are {\em local} stably isomorphic. We assume that the idempotent $s$ is already sufficiently stabilised.
This means there exists an invertible matrix $u \in \mathbb{GL}_{2n+m}(\mathit{A}_{\mu})$ 
such that
$$p_{1} + q_{2} + s,   \; = \; u \; ( p_{2} + q_{1}+ s ) \; u^{-1}.$$ 
We add to both sides of this equality the idempotent $1_{m} - s$ and we extend $u$ to be the identity on the last summand. 
We get  
$$p_{1} + q_{2} + s + (1-s) ,   \; = \; u \; ( p_{2} + q_{1}+ s +(1-s) ) \; u^{-1}.$$ 

From this we get further
$$p_{1} + q_{2} + 1_{2m} ,   \; = \; u \; ( p_{2} + q_{1} + 1_{2m} ) \; u^{-1},$$ 
that is, the idempotents $p_{1} + q_{2} $,     $p_{2} + q_{1} $ are {\em local stably} isomorphic
$$p_{1} + q_{2} \; \sim_{sl}    p_{2} + q_{1}. $$ 
\end{proof}   
\begin{lemma}  
Let $p, q \in \mathbb{I}demp_{n}(\mathit{A}_{\mu})$ be idempotents.
Suppose
\begin{equation}
[p] - [1_{n}] \;=\; [q] - [1_{n}] \in K_{0}(\mathit{A}_{\mu}).
\end{equation}
Then
\par
-i)  $p$ and $q$ are {\em local stably} isomorphic,  $p \sim_{ls} q$,
\par
-ii)  exists an $N \in \mathbb{N}$ and an $u \in \mathbb{GL}_{n+N}(\mathit{A}_{\mu})$ such that
\begin{equation}  
p + 1_{N} = u \; q \; u^{-1} + 1_{N} =  u \; (q + 1_{N})\; u^{-1}.
\end{equation}  
\end{lemma}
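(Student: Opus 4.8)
The plan is to read off both assertions from Lemma 32, of which this statement is the one-idempotent specialisation (take $p_2=q_2=1_n$ there), together with the commutativity of the direct sum (Proposition 13) and the fact that each filtration stage contains the unit, $1\in\mathit{A}_\mu$ (Axiom 3).

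\emph{Part -i).} Since $K_0(\mathit{A}_\mu)$ is a group, adding $[1_n]$ to both sides of the displayed hypothesis gives $[p]=[q]\in K_0(\mathit{A}_\mu)$, and by construction this equality of classes of idempotents is exactly the relation $p\sim_{ls}q$. (Equivalently: apply Lemma 32 with $p_1:=p$, $q_1:=q$, $p_2:=q_2:=1_n$ to get $p+1_n\sim_{ls}1_n+q$, commute the summands on the right by Proposition 13, and remove the common $1_n$ --- permissible since $\sim_{ls}$ is, by construction, equality of classes in the group $K_0(\mathit{A}_\mu)$.)

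\emph{Part -ii).} For this I would re-run the proof of Lemma 32 with $p_2=q_2=1_n$, this time keeping the conjugating matrix explicit. Unwinding $p\sim_{ls}q$ through the Grothendieck construction, there are an idempotent $s\in\mathbb{I}demp_m(\mathit{A}_\mu)$ --- which, after absorbing all stabilisations into $s$, may be assumed sufficiently stabilised --- and an invertible $u_0\in\mathbb{GL}_{n+m}(\mathit{A}_\mu)$ with $p\oplus s=u_0\,(q\oplus s)\,u_0^{-1}$. Adding the complementary idempotent $1_m-s$ to both sides and extending $u_0$ by $1_m$, then conjugating the last $2m$ coordinates by the involution $v:=\begin{pmatrix} s & 1_m-s\\ 1_m-s & s\end{pmatrix}\in\mathbb{GL}_{2m}(\mathit{A}_\mu)$ (it satisfies $v^2=1_{2m}$ and $v\,(1_m\oplus 0_m)\,v^{-1}=s\oplus(1_m-s)$), turns that identity into $p\oplus 1_m\oplus 0_m=u_1\,(q\oplus 1_m\oplus 0_m)\,u_1^{-1}$ with $u_1\in\mathbb{GL}_{n+2m}(\mathit{A}_\mu)$ the resulting composite conjugator. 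Discarding the trailing $0_m$ as a stabilisation, and --- if one wants the literal form in the statement --- conjugating once more by a block permutation so that the conjugator restricts to the identity on the adjoined identity block, one obtains the asserted $N$ (one may take $N=2m$) and $u\in\mathbb{GL}_{n+N}(\mathit{A}_\mu)$ with $p+1_N=u\,(q+1_N)\,u^{-1}$. Throughout, $1_m-s$, $v$ and the permutation matrices have entries in $\mathit{A}_\mu$ and are invertible over $\mathit{A}_\mu$ because $1\in\mathit{A}_\mu$ (Axiom 3), so everything stays inside the single filtration stage $\mathit{A}_\mu$.

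\emph{Where the work is.} There is essentially nothing new beyond Lemma 32: the one substantive step, already carried out in its proof, is the replacement of the \emph{arbitrary} Grothendieck stabiliser $s$ by an honest \emph{identity} block --- this is what forces the complement $1_m-s$ and the involution $v$ into the argument, since $s$ cannot simply be erased. The remaining points are purely bookkeeping: tracking the matrix sizes through the successive stabilisations and conjugations, and checking that each auxiliary matrix is invertible \emph{over} $\mathit{A}_\mu$ and not merely over $\mathit{A}$, which is precisely the role of Axiom 3.
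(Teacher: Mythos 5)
Your overall approach is the same as the paper's: specialise Lemma~32 to $p_2=q_2=1_n$ and then unwind the Grothendieck construction to produce an explicit conjugating matrix. You are actually more careful than the paper in one important place. The paper's proof of Lemma~32 silently replaces the adjoined idempotent $s\oplus(1_m-s)$ by $1_{2m}$, which is simply false (the former has rank $m$, the latter rank $2m$); the correct statement is that $s\oplus(1_m-s)$ is conjugate to $1_m\oplus 0_m$, and your involution $v=\bigl(\begin{smallmatrix}s&1-s\\1-s&s\end{smallmatrix}\bigr)$ is exactly the matrix that witnesses this. Checking $v^2=1$, $v\,(1_m\oplus 0_m)\,v=s\oplus(1_m-s)$, and that $v$ lives in $\mathbb{GL}_{2m}(\mathit{A}_\mu)$ by Axiom~3, is all correct, and this genuinely repairs the argument.

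Where your write-up goes wrong is the last sentence of Part~-ii). What the involution delivers is
\begin{equation*}
p\oplus 1_m\oplus 0_m \;=\; u_1\,(q\oplus 1_m\oplus 0_m)\,u_1^{-1},\qquad u_1\in\mathbb{GL}_{n+2m}(\mathit{A}_\mu),
\end{equation*}
and the residual $0_m$ block cannot be erased by ``conjugating once more by a block permutation so that the conjugator restricts to the identity on the adjoined identity block.'' Conjugating the whole identity by any $\sigma$ replaces $u_1$ by $\sigma u_1\sigma^{-1}$ and does not create any commutation of the new conjugator with $0_n\oplus 1_N$; and if you did have a conjugator $u$ commuting with $0_n\oplus 1_N$ in $p\oplus 1_N=u(q\oplus 1_N)u^{-1}$, then $u$ would be block diagonal and you would obtain $p=u_{11}qu_{11}^{-1}$, i.e.\ $p\sim_l q$ outright, which is strictly stronger than what the hypothesis $[p]=[q]$ can give. (The same objection applies to the paper's own middle expression $u\,q\,u^{-1}+1_N=u(q+1_N)u^{-1}$ in the statement of the lemma, which forces this impossible commutation; that equality should not be taken literally.) What is actually true, and what is used downstream in Theorem~34, is the stable statement: $p\oplus 1_m\oplus 0_m$ and $q\oplus 1_m\oplus 0_m$ are locally conjugate, equivalently $p\oplus 1_m\sim_l q\oplus 1_m$ in the direct limit $\mathbb{I}demp(\mathit{A}_\mu)$ where trailing zero blocks are identified. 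You should stop there rather than assert the erasure of $0_m$ as a theorem. A smaller point: both you and the paper suppress the loss of two filtration orders ($\mu\mapsto\mu-2$) that is needed to make $\sim_l$ transitive, which is already built into Definition~14.
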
   
\begin{proof}   
-i) Lemma 32 says that  the idempotents $p + 1_{n}$,  $q + 1_{n}$ are {\em local stably} isomorphic. This means that the idempotents
$p$ and  $q$ are {\em local stably} isomorphic. Part -ii) describes this.
\end{proof}  

\begin{theorem}   
Let $p_{ij}$ be idempotents 
$$[p_{1}] = [p_{11}] - [p_{12}]       \in K_{0}(\Lambda_{1, \mu})$$  
$$[p_{2}] = [p_{21}] - [p_{22}]      \in K_{0}(\Lambda_{2, \mu})$$
\par
with the property that
$$j_{1 \ast} [p_{1}] = j_{2 \ast} [p_{2}] \in K_{0}(\Lambda^{'}_{\mu}).$$
\par
Then there exists $[p] = [p_{01}] - [p_{02}] \in K_{0}(\Lambda_{\mu})$ with the property that
$$i_{1 \ast} [p] = [p_{1}] \;\; and \;\; i_{2 \ast} [p] = [p_{2}].$$
\end{theorem}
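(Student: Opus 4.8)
The plan is to reduce the two virtual $K_0$-classes to a normal form in which the negative parts are identity idempotents, and then to feed the resulting data into Theorem~27; the only genuine work is to manufacture, out of the $K_0$-level identity $j_{1\ast}[p_1]=j_{2\ast}[p_2]$, an honest local conjugation over $\Lambda'_\mu$ between representatives, to which Theorem~27 applies. For the normalisation, observe that for an idempotent $q\in\mathbb{I}demp_m(\Lambda_{k,\mu})$ the complement $1_m-q$ is again idempotent and the matrix $\begin{pmatrix} q & 1_m-q\\ 1_m-q & q\end{pmatrix}$ is its own inverse and, by Axiom~3, has entries in $\Lambda_{k,\mu}$; it conjugates $q\oplus(1_m-q)$ onto $1_m\oplus 0_m$, so $[q]+[1_m-q]=[1_m]$ in $K_0(\Lambda_{k,\mu})$. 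Applying this to $p_{12}$ and to $p_{22}$, and padding with zero and identity blocks to equalise all matrix sizes and the two negative parts, one may assume
\[
[p_1]=[P_1]-[1_N]\in K_0(\Lambda_{1,\mu}),\qquad
[p_2]=[P_2]-[1_N]\in K_0(\Lambda_{2,\mu}),
\]
with one $N$ and with $P_1\in\mathbb{I}demp_c(\Lambda_{1,\mu})$, $P_2\in\mathbb{I}demp_c(\Lambda_{2,\mu})$ of a common size $c$; since $j_1,j_2$ are localised homomorphisms, $j_{1\ast}P_1,j_{2\ast}P_2\in\mathbb{I}demp_c(\Lambda'_\mu)$ and the hypothesis now reads $[j_{1\ast}P_1]=[j_{2\ast}P_2]\in K_0(\Lambda'_\mu)$.

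Next I would promote this identity of classes to an actual conjugation. Applying Lemma~32 over $\Lambda'_\mu$ with the zero idempotents as the second arguments, together with Proposition~13 to commute direct summands, gives $j_{1\ast}P_1\sim_{sl}j_{2\ast}P_2$; hence, after stabilising, there are an integer $N'$ and an invertible matrix $v\in\mathbb{GL}_{c+N'}(\Lambda'_\mu)$ with
\[
j_{1\ast}\bigl(P_1\oplus 0_{N'}\bigr)=v\,j_{2\ast}\bigl(P_2\oplus 0_{N'}\bigr)\,v^{-1}.
\]
Put $Q_1:=P_1\oplus 0_{N'}$ and $Q_2:=P_2\oplus 0_{N'}$; adding zeros leaves the classes unchanged, so $[p_1]=[Q_1]-[1_N]$, $[p_2]=[Q_2]-[1_N]$, while $j_{1\ast}Q_1=v\,(j_{2\ast}Q_2)\,v^{-1}$.

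Finally I would invoke Theorem~27. Using Hypothesis~3 and the symmetry of the statement in $\Lambda_1,\Lambda_2$, arrange that $j_2$ is surjective, and apply Theorem~27 to the triple $(Q_1,Q_2,v)$: it produces an idempotent double matrix $Q=(Q_1,Q_2,v)\in\mathbb{I}demp(\Lambda_\mu)$ with $i_{1\ast}Q=Q_1\oplus 0$ and $i_{2\ast}Q=\tilde Q_2$, where $\tilde Q_2\sim_l Q_2\oplus 0$ over $\Lambda_{2,\mu}$, so that $[i_{1\ast}Q]=[Q_1]$ and $[i_{2\ast}Q]=[Q_2]$. Setting
\[
[p]:=[Q]-[1_N]=:[p_{01}]-[p_{02}]\in K_0(\Lambda_\mu),
\]
where $1_N$ is the idempotent double matrix $(1_N,1_N)$ over $\Lambda_\mu$ (legitimate by Axiom~3 since $j_1,j_2$ preserve units), one gets $i_{1\ast}[p]=[Q_1]-[1_N]=[p_1]$ and $i_{2\ast}[p]=[\tilde Q_2]-[1_N]=[Q_2]-[1_N]=[p_2]$, as required.

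The hard part will be the second step: eliminating the ambiguity of the Grothendieck completion so that the stabilising idempotent over $\Lambda'_\mu$ may be taken to be a genuine identity --- equivalently, so that Lemma~32 may be invoked with \emph{trivial} second arguments --- since only then does the conjugating element $v$ pull back symmetrically along both $j_1$ and $j_2$, which is exactly the input Theorem~27 needs. A subsidiary point to keep honest, though routine since every construction above takes place at the fixed level $\mu$ and only matrix sizes increase, is the two-order filtration shift in the definition of $K_0(\Lambda_{\bullet,\mu})$ (formula~(10)): it is precisely that shift which absorbs the filtration orders lost when the conjugations of Lemma~32 and of Theorem~27 are composed.
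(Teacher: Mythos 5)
Your proof is correct and follows essentially the same route as the paper's: normalise both virtual classes so that the negative parts become trivial idempotents $[1_N]$, translate the $K_0$-level equality over $\Lambda'_\mu$ into a stable local conjugation of honest idempotents, and feed the resulting triple into Theorem~27; the paper carries out precisely this normalisation via $[p_1]=[p_{11}+(1-p_{12})]-[1_n]$ and invokes Lemma~33 (itself a corollary of Lemma~32) to extract the conjugating element $u$, whereas you invoke Lemma~32 with zero second arguments and make the complement trick explicit with the self-inverse matrix $\begin{pmatrix} q & 1-q\\ 1-q & q\end{pmatrix}$ --- a cosmetic rather than substantive difference. Your version is slightly more careful about which of $j_1,j_2$ is surjective when Theorem~27 is applied, and about tracking the filtration shift, both of which the paper leaves implicit.
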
  
\begin{proof}   
We may describe  the two $K$-theory classes differently

$$[p_{1}] = [p_{11}] - [p_{12}] =     [p_{11}+ (1 - p_{12} )] - [p_{12} + (1-p_{12})] = [p^{'}_{12}] - [ 1_{n} ]       \in K_{0}(\Lambda_{1, \mu})$$  
and
$$[p_{2}] = [p_{21}] - [p_{22}]  = [p_{21}+ (1 - p_{22}) ] - [p_{22} + (1-p_{22})] = [p^{'}_{22}] - [ 1_{n} ]       \in K_{0}(\Lambda_{2, \mu}).$$
\par
Then
$$j_{1 \ast} [p_{1}] =  j_{1 \ast }(  [p^{'}_{12}] - [ 1_{n} ] ) = ( j_{1 \ast }  [p^{'}_{12}]) - [ 1_{n} ] $$ 
and 
$$ j_{2 \ast} [p_{2}]  =  j_{2 \ast }(  [p^{'}_{22}] - [ 1_{n} ] ) = ( j_{2 \ast }  [p^{'}_{22}]) - [ 1_{n} ]. $$ 
The hypothesis says that 
$$ ( j_{1 \ast }  [p^{'}_{12}]) - [ 1_{n} ] = ( j_{2 \ast }  [p^{'}_{22}]) - [ 1_{n} ]. $$ 
\par
Lemma 33 says that the idempotents $ j_{1 \ast }  [p^{'}_{12}]$    $ j_{2 \ast }  [p^{'}_{22}]$  are {\em local stably} isomorphic. Now we are in the position to use Theorem 27. Let $u \in \mathbb{GL}(\Lambda_{2}(\mathit{A}_{\mu}))$ be the conjugation
\begin{equation}  
 j_{1 \ast }  (p^{'}_{12}) = u  \; j_{2 \ast }  (p^{'}_{22}) \; u^{-1}.
\end{equation}  
Theorem 27 provides the idempotent $$p = (j_{1 \ast}  (p^{'}_{12}), (j_{2 \ast}  (p^{'}_{2}),u). $$
The desired idempotents are
$$
p_{10} =  p = (j_{1 \ast}  (p^{'}_{12}), (j_{2 \ast}  (p^{'}_{2}),u) \in \mathbb{I}demp(\Lambda_{\mu})
$$
$$
p_{20} = 1_{N}   \in \mathbb{I}demp(\Lambda_{\mu}).
$$
\end{proof}   

\subsection{Constructing invertible matrices over $\Lambda_{\mu}$.}  
\par
\begin{theorem} {\bf Definition}.  
Suppose $s_{1} \in \mathbb{GL}_{n}(\Lambda_{1, \mu})$, resp.  $s_{2} \in \mathbb{GL}_{n}(\Lambda_{2, \mu})$,  are 
invertible matrices with entries in $\Lambda_{1, \mu}$, resp. $\Lambda_{2, \mu}$, such that
\begin{equation}  
j_{1\ast} (u_{1}) =  u \;  j_{2\ast} (u_{2}) \; u^{-1},
\end{equation}  
where $u \in \mathbb{GL}_{n}(\Lambda'_{\mu})$ is an invertible matrix.
\par
-i) Then there exists an invertible matrix $s \in \mathbb{GL}_{2n}(\Lambda_{\mu})$ such that
\begin{equation}  
i_{1\ast} (s) = s_{1} \oplus 1_{n}  \;\;  and  \;\; \; i_{2\ast} (s) = \tilde{s}_{2} 
\end{equation}  
where the invertible matrix $\tilde{s}_{2} \in \mathbb{GL}_{2n}(\Lambda_{2, \mu})$ is conjugated to $s_{2} \oplus 1_{n}$ through an inner auto-morphism defined by the invertible matrix $\tilde{U} \in GL_{2n} (\Lambda_{2, \mu})$, that is  $\tilde{p}_{2} = \tilde{U} p_{2}  \tilde{U}^{-1}$.
\par
The corresponding invertible double matrix  $s$ is denoted $s =(s_{1}, s_{2}, u)$. 
\par
-ii) 
\begin{equation}   
j_{2, \ast} \tilde{s}_{2} = (u \; j_{2, \ast} (s_{2} ) \; u^{-1}) \oplus 1_{n} = j_{1, \ast} ({s}_{1} \oplus 1_{n})
\end{equation}    
\par
-iii) Any invertible double matrix $s \in \mathbb{GL}_{2n} (\Lambda_{\mu})$  is conjugated through a localised inner automorphism defined by a matrix $U \in \mathbb{GL}_{2n}(\Lambda_{\mu})$  to
an idempotent matrix of the form $s =(s_{1}, s_{2}, u)$ defined by -i).
\end{theorem}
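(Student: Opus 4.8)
The plan for the proof is to repeat, in the multiplicative setting, the two--step argument by which Theorem 27 was obtained from Lemma 28 and Lemma 29, replacing throughout the summand $0_{n}$ by the summand $1_{n}$ and the word ``idempotent'' by ``invertible'' --- so that in part -i) one should read $\tilde{s}_{2}=\tilde{U}\,(s_{2}\oplus 1_{n})\,\tilde{U}^{-1}$ and, in part -iii), ``invertible matrix'' in place of ``idempotent matrix''. As in \S 8.1, every substantive step is carried out on matrices over $\Lambda_{2,\mu}$, and by Hypothesis 3 we may and do assume that $j_{2}$ is the surjective homomorphism.

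\textbf{Step 1 (the case in which $u$ lifts).} First I would establish the invertible analogue of Lemma 28: if the conjugating matrix $u\in\mathbb{GL}_{n}(\Lambda'_{\mu})$ appearing in (58) lifts to an invertible matrix $\tilde{u}\in\mathbb{GL}_{n}(\Lambda_{2,\mu})$ with $j_{2\ast}\tilde{u}=u$, then the componentwise pair $s=(s_{1},\ \tilde{u}\,s_{2}\,\tilde{u}^{-1})$ is a well--defined invertible double matrix in $\mathbb{GL}_{n}(\Lambda_{\mu})$. The only things to verify are the double--matrix condition and invertibility over $\Lambda_{\mu}$: the identity $j_{1\ast}(s_{1})=j_{2\ast}(\tilde{u}\,s_{2}\,\tilde{u}^{-1})$ is just (58) rewritten using $j_{2\ast}\tilde{u}=u$, exactly as in (41); and the componentwise inverse $(s_{1}^{-1},\ \tilde{u}\,s_{2}^{-1}\,\tilde{u}^{-1})$ satisfies the same compatibility, obtained by inverting (58). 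This already proves -i) and -ii) in the special case where $u$ lifts, with $\tilde{U}=\tilde{u}$ and (60) an immediate rewriting.

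\textbf{Step 2 (reduction to Step 1 by doubling).} In general $u$ admits no invertible lift over $\Lambda_{2,\mu}$, and here I would use the device of Lemma 29. Replace $(s_{1},s_{2})$ by $(s_{1}\oplus 1_{n},\ s_{2}\oplus 1_{n})$; over $\Lambda'_{\mu}$ one has
\begin{equation*}
j_{1\ast}(s_{1}\oplus 1_{n})=\bigl(u\,j_{2\ast}(s_{2})\,u^{-1}\bigr)\oplus 1_{n}=U\,\bigl(j_{2\ast}(s_{2})\oplus 1_{n}\bigr)\,U^{-1},\qquad U:=\mathcal{O}_{2n}(u)=u\oplus u^{-1}.
\end{equation*}
Now invoke the decomposition (48) of $\mathcal{O}_{2n}(u)$ as a product of three block--elementary factors and one block--permutation factor. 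Since $j_{2}$ is surjective, the entries of the first three factors lift to $\Lambda_{2,\mu}$, so those factors lift to invertible (elementary) matrices over $\Lambda_{2,\mu}$, while the last factor has scalar entries (Axiom 3) and lifts as it stands; their product is an invertible $\tilde{U}\in\mathbb{GL}_{2n}(\Lambda_{2,\mu})$ with $j_{2\ast}\tilde{U}=U$. Setting $\tilde{s}_{2}:=\tilde{U}\,(s_{2}\oplus 1_{n})\,\tilde{U}^{-1}$ and applying Step 1 to $(s_{1}\oplus 1_{n},\ s_{2}\oplus 1_{n})$, $U$, $\tilde{U}$ produces the desired invertible double matrix $s=(s_{1},s_{2},u)\in\mathbb{GL}_{2n}(\Lambda_{\mu})$ with $i_{1\ast}(s)=s_{1}\oplus 1_{n}$ and $i_{2\ast}(s)=\tilde{s}_{2}$, which is -i); applying $j_{2\ast}$ to $\tilde{s}_{2}$ and using $j_{2\ast}\tilde{U}=u\oplus u^{-1}$ gives (60), i.e. -ii). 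For -iii) I would argue exactly as in Theorem 27 -iii): an arbitrary invertible double matrix $(\sigma_{1},\sigma_{2})\in\mathbb{GL}_{2n}(\Lambda_{\mu})$ is brought, by a localised inner automorphism adjusting only its $\Lambda_{2}$--component (using once more the elementary--matrix lifting of Step 2, now applied to the element of $\mathbb{GL}(\Lambda'_{\mu})$ realising $j_{1\ast}\sigma_{1}=j_{2\ast}\sigma_{2}$), into the normal form $(s_{1},s_{2},u)$ of -i).

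\textbf{The main obstacle} is the one that already forced the size--doubling in Theorem 27: a single invertible $u$ over the quotient $\Lambda'_{\mu}$ need not admit any invertible lift over $\Lambda_{2,\mu}$, whereas $u\oplus u^{-1}=\mathcal{O}_{2n}(u)$ always does, precisely because it is a product of elementary matrices (formula (48)) and elementary matrices lift along a surjection to invertible elementary matrices. Everything else --- that the pair produced is a genuine double matrix, that its componentwise inverse is again one, and identity (60) --- is a routine transcription of the idempotent computations (40)--(46) with $1_{n}$ in place of $0_{n}$. The one bookkeeping point to watch, exactly as in Lemma 28 and in the definitions (10) and (20), is the mild loss of filtration orders caused by the conjugation $\tilde{U}(s_{2}\oplus 1_{n})\tilde{U}^{-1}$ (by Axiom 4 a product of three factors from $\Lambda_{2,\mu}$), which is absorbed by the passage to $\Lambda_{\mu+2}$ already built into the definition of $K_{1}(\Lambda_{\mu})$.
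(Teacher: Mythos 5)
Your proposal is correct and takes essentially the same route as the paper: the paper's own proof of Theorem 35 simply declares that the argument of Theorem 27 (Lemma 28 plus the size-doubling Lemma 29, both relying on the elementary-matrix factorisation (48) of $\mathcal{O}_{2n}(u)$) carries over verbatim once idempotents are replaced by invertibles and $0_n$ by $1_n$, noting that inner automorphisms fix $1_n$. You have merely written out explicitly what the paper leaves as a transcription exercise, so there is no substantive divergence.
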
 
\begin{proof}    
The proof of this theorem goes along the same manner as the proof of Theorem 27. 
\par
In retrospect, the proof of Theorem 27
is based on the following facts: -a) operations with double matrices respect Remark 8, -b) lifting of the invertible element 
$U= \mathbb{O}_{2n}(u) := u \oplus u^{-1} \in \mathbb{GL}_{2n}(\Lambda_{2, \mu})$ associated to the invertible element
$u \in \mathbb{GL}_{n}(\Lambda_{2, \mu})$ by means of the factorisation of $U$ by elementary matrices (identity (48)),
-c) the fact that the inner auto-morphisms keep the mapping $\mathbb{O}_{n}$  unchanged. 
\par
To prove Theorem 35 we use the same arguments -a), - b), -c) with the following changes: idempotents are replaced by invertible elements while for -c) we use the fact that the inner auto-morphisms transform the mapping $1_{n}$ into itself. 
\end{proof}
\par
The next theorem is the analogue of Theorem 34 in the $K_{1}(\mathit{A}_{\mu})$ case.
\begin{theorem}  
Suppose $j_{1}$ and $j_{2}$ are epi-morphisms.
\par
Let $u_{1} \in \mathbb{GL}(\Lambda_{1, \mu})$ and  $u_{2} \in \mathbb{GL}(\Lambda_{2, \mu})$ be such that
\begin{equation}   
j_{1 \ast}  [u_{1}]  =   j_{2 \ast}  [u_{2}] \in K_{1} (\Lambda^{'}_{\mu})  \otimes \mathbb{Z}[\frac{1}{2}].
\end{equation}  
Then there exists $s \in \mathbb{I}demp(\Lambda_{\mu})$ such that
\begin{equation}   
 i_{1 \ast} [ s ] = [u_{1}] \in K_{1}(\Lambda_{1, \mu})  \otimes \mathbb{Z}[\frac{1}{2}] \;\;\;  and \;\;\;  i_{2 \ast} [ s ] = [u_{2}]  \in K_{1}(\Lambda_{2, \mu})  \otimes \mathbb{Z}[\frac{1}{2}]. 
\end{equation}  
\end{theorem}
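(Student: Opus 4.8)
The plan is to run Milnor's Mayer--Vietoris argument for $K_{1}$, with Theorem 35 (the $K_{1}$ counterpart of Theorem 27) playing the role that Theorem 27 played in the proof of Theorem 34, and with Proposition 23.-iv) playing the role of Lemma 33.

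First I would turn the hypothesis into a concrete identity over $\Lambda'_{\mu}$. The equality $j_{1\ast}[u_{1}]=j_{2\ast}[u_{2}]$ in $K_{1}(\Lambda'_{\mu})\otimes\mathbb{Z}[\tfrac{1}{2}]$ means that $2^{N}j_{1\ast}[u_{1}]=2^{N}j_{2\ast}[u_{2}]$ in the integral group $K_{1}(\Lambda'_{\mu})$ for some $N\in\mathbb{N}$; since the addition of $K_{1}$ is $\oplus$, this is $[\,j_{1}(u_{1})^{\oplus 2^{N}}\,]=[\,j_{2}(u_{2})^{\oplus 2^{N}}\,]$ in $K_{1}(\Lambda'_{\mu})$. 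Proposition 23.-iv), applied to the matrices $u_{1}^{\oplus 2^{N}}$ and $u_{2}^{\oplus 2^{N}}$, then produces $\xi_{1},\xi_{2}\in\mathbb{O}(\Lambda'_{\mu})$ and $w\in\mathbb{GL}(\Lambda'_{\mu})$ with
\[
j_{1\ast}\!\bigl(u_{1}^{\oplus 2^{N}}\bigr)\oplus\xi_{1}\;=\;w\,\Bigl(j_{2\ast}\!\bigl(u_{2}^{\oplus 2^{N}}\bigr)\oplus\xi_{2}\Bigr)\,w^{-1},
\]
and, after stabilising the $\mathbb{O}$-summands by identity matrices, I may assume $\xi_{i}=\mathcal{O}(v_{i})$ with $v_{i}\in\mathbb{GL}(\Lambda'_{\mu})$ of the same size.

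Next I would lift. Since $j_{1}$ and $j_{2}$ are epimorphisms, the entries of $v_{i}$ and of $v_{i}^{-1}$ lift to $\Lambda_{1,\mu}$, resp.\ $\Lambda_{2,\mu}$; substituting these lifts into the factorisation of $\mathcal{O}_{2m}(v_{i})$ into elementary matrices and the scalar permutation $\left(\begin{smallmatrix}0&-1\\ 1&0\end{smallmatrix}\right)$ exhibited in the proof of Lemma 29 produces an \emph{invertible} matrix $\eta_{1}$ over $\Lambda_{1}$, resp.\ $\eta_{2}$ over $\Lambda_{2}$, with $j_{1\ast}(\eta_{1})=\mathcal{O}(v_{1})$ and $j_{2\ast}(\eta_{2})=\mathcal{O}(v_{2})$; the matrix products here consume only a bounded number of filtration orders, so after re-indexing $\mu$ this is harmless. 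Put $s_{1}:=u_{1}^{\oplus 2^{N}}\oplus\eta_{1}$ over $\Lambda_{1,\mu}$ and $s_{2}:=u_{2}^{\oplus 2^{N}}\oplus\eta_{2}$ over $\Lambda_{2,\mu}$; the displayed identity becomes $j_{1\ast}(s_{1})=w\,j_{2\ast}(s_{2})\,w^{-1}$, which is exactly the hypothesis of Theorem 35 for the triple $(s_{1},s_{2},w)$. Theorem 35 then yields an invertible double matrix $s=(s_{1},s_{2},w)\in\mathbb{GL}(\Lambda_{\mu})$ with $i_{1\ast}(s)=s_{1}\oplus 1$ and $i_{2\ast}(s)=\tilde s_{2}\sim_{l}s_{2}\oplus 1$, whence $i_{1\ast}[s]=[u_{1}^{\oplus 2^{N}}]+[\eta_{1}]$ in $K_{1}(\Lambda_{1,\mu})$ and $i_{2\ast}[s]=[u_{2}^{\oplus 2^{N}}]+[\eta_{2}]$ in $K_{1}(\Lambda_{2,\mu})$.

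It then remains to see that the correction classes $[\eta_{i}]$ vanish. By additivity (Proposition 23.-i)) and the identity $[AB]=[A]+[B]$ in $K_{1}$ --- which follows from $\left(\begin{smallmatrix}AB&0\\ 0&1\end{smallmatrix}\right)=\left(\begin{smallmatrix}A&0\\ 0&B\end{smallmatrix}\right)\mathcal{O}(B)$ once one knows, by Proposition 22.-ii), that $\mathcal{O}(B)$ represents $0$ --- the class $[\eta_{i}]$ is the sum of the classes of the elementary factors and of the permutation. Each elementary factor $E_{pq}(a)$ is a commutator $E_{pq}(a)=[E_{pk}(a),E_{kq}(1)]$ by Lemma 31, hence has class $0$ after one stabilisation (to supply the third index $k$); and $\left(\begin{smallmatrix}0&-1\\ 1&0\end{smallmatrix}\right)$ is, over $\mathbb{C}\cdot 1\subset\Lambda_{i,\mu}$, locally conjugate to $\mathcal{O}(\sqrt{-1}\cdot 1)$, so it too has class $0$. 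Thus $i_{1\ast}[s]=2^{N}[u_{1}]$ and $i_{2\ast}[s]=2^{N}[u_{2}]$, and the element $2^{-N}[s]\in K_{1}(\Lambda_{\mu})\otimes\mathbb{Z}[\tfrac{1}{2}]$ is carried by $(i_{1\ast},i_{2\ast})$ to $([u_{1}],[u_{2}])$, which is the assertion. I expect the lifting step, together with the bookkeeping it imposes, to be the real obstacle: an element of the form $\mathcal{O}(v)=v\oplus v^{-1}$ over the quotient $\Lambda'$ admits an invertible lift over $\Lambda_{2}$ but, in general, \emph{no} lift of the same $\mathcal{O}$-shape, and it is this defect --- which the passage to $2^{N}$-th $\oplus$-powers and the hypothesis over $\mathbb{Z}[\tfrac{1}{2}]$ are designed to absorb --- that makes the factor $\mathbb{Z}[\tfrac{1}{2}]$ unavoidable here and, downstream, in Theorem 51; there is no counterpart of this difficulty in the $K_{0}$ case.
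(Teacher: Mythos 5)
Your overall plan matches the paper's: translate the hypothesis $j_{1\ast}[u_{1}]=j_{2\ast}[u_{2}]$ into a concrete local conjugation modulo $\mathbb{O}(\Lambda'_{\mu})$ (Proposition 23.-iv)), lift the obstructing $\mathbb{O}$-terms along the factorisation (48) of $\mathcal{O}_{2n}(\cdot)$ into elementary matrices, and then invoke Theorem 35 to assemble a double matrix. But the step where you dispose of the correction terms $[\eta_{i}]$ has a genuine gap, and it is precisely at that point that your route and the paper's diverge.

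You assert that $[\eta_{i}]=0$ because $\eta_{i}$ is a product of elementary matrices and a permutation, then use the identity $[AB]=[A]+[B]$ to write $[\eta_{i}]$ as a sum of the classes of these factors, and finally say that each elementary factor, being a commutator, has class $0$. Both of these last two moves are valid in the classical $K_{1}$ but \emph{not} in the local $K_{1}$ defined in this paper. The relation used to define $K_{1}(\mathit{A}_{\mu})$, Definitions 20--21, is \emph{additive}: $u_{1}\sim_{\mathbb{O}_{\mu}}u_{2}$ iff $u_{1}\oplus\xi_{1}=u_{2}\oplus\xi_{2}$ for some $\xi_{i}\in\mathbb{O}(\mathit{A}_{\mu})$. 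Knowing $[\mathcal{O}(B)]=0$ therefore does not yield $[X\cdot\mathcal{O}(B)]=[X]$, because the congruence $\sim_{\mathbb{O}_{\mu}}$ is not multiplicative; the paper is explicit about this in Remark 24, where it says only the weaker identity $-[u]=[u^{-1}]$ (formula (23)) is carried over to $K_{1}(\mathit{A}_{\mu})$, while $[A+B]=[AB]$ is noted to hold in the \emph{classical} $K_{1}(\mathit{A})$. Likewise the assertion that a commutator represents $0$ contradicts Theorem 45, which gives $\mathrm{Ker}\,\Pi=[\mathbb{GL}(\mathit{A}),\mathbb{GL}(\mathit{A})]/\mathrm{Inner}(\mathit{A})$: the whole point of $K_{1}^{\mathrm{loc}}$ is that commutators (and elementary matrices) survive. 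So the chain of deductions ``$\eta_{i}$ factors into elementaries $\Rightarrow[\eta_{i}]$ is the sum of their classes $\Rightarrow$ each is $0$'' collapses, and in fact the correction class $[\eta_{i}]$ need not vanish even after tensoring with $\mathbb{Z}[\tfrac12]$: $\eta_{i}$ is only an invertible lift of the block-diagonal $\mathcal{O}(v_{i})$, and over $\Lambda_{i,\mu}$ it has no reason to be conjugate to its inverse.

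The paper resolves exactly this difficulty differently: rather than lifting $\xi_{1}$ to some invertible $\tilde{\xi}_{1}$ and then arguing its class away, the proof (Lemma 38) forms $\tilde{U}_{1}:=\tilde{\xi}_{1}\oplus\tilde{\xi}_{1}^{-1}\in\mathbb{O}_{4n}(\Lambda_{1,\mu})$, which \emph{by construction} lies in $\mathbb{O}$ and hence represents $0$ in $K_{1}(\Lambda_{1,\mu})$, and then observes that
\begin{equation*}
j_{1\ast}(\tilde{U}_{1})=\xi_{1}\oplus\xi_{1}^{-1}\sim_{l}2\xi_{1},
\end{equation*}
because $\xi_{1}^{-1}$ is locally conjugate to $\xi_{1}$ when $\xi_{1}$ has the block-diagonal $\mathcal{O}$-shape. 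This is where the factor $2$ actually enters: the honest lift lands in $\mathbb{O}$ but projects to $2\xi_{1}$, not $\xi_{1}$, so one divides by $2$ after tensoring. Your $2^{N}$ absorbs the ambiguity in the \emph{hypothesis} but does nothing for the new ambiguity introduced by the \emph{lift}; you would still need the paper's doubling device (or an alternative argument that $[\eta_{i}]$ is $2$-torsion, which the commutator argument does not supply) to finish.
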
   
\begin{remark}  
Tensor multiplication  by $\mathbb{Z}[\frac{1}{2}]$ reflects the difficulty to lift elements belonging to
$\mathcal{O}(\Lambda^{'}_{\mu})$ to elements  belonging to  $\mathcal{O}(\Lambda^{'}_{1, \mu})$ and 
$\mathcal{O}(\Lambda^{'}_{2, \mu})$.  The nature of elements belonging to $\mathcal{O}(\Lambda^{'}_{\mu})$, see formula (48), insures the existence of the lift as an invertible element; the factor $1/2$ is needed to insure that the invertible lifts belong to
$\mathcal{O}(\Lambda^{'}_{1, \mu})$ and $\mathcal{O}(\Lambda^{'}_{2, \mu})$.
\par
The presence of the factor $1/2$ has important  consequences.
\end{remark}  
     
\begin{proof}   
The definition of $K_{1}(\Lambda^{'}_{\mu})$, see Definition 19 - 20,  tells that the description of its elements contains an ambiguity belonging to the sub-module $\mathbb{O}(\Lambda^{'}_{\mu})$. 
 Equality (61) tells then that there exist two elements 
$\xi_{1}, \xi_{2} \in \mathbb{O}_{2n}(\Lambda^{'}_{\mu})$ such that the invertible matrices
$j_{1 \ast}(u_{1}) + \xi_{1}$, $j_{2 \ast}(u_{2}) + \xi_{2}$ are {\em local} conjugated by means of a
matrix   $u \in \mathbb{GL}_{2n}(\Lambda^{'})$
\begin{equation}  
j_{1 \ast}(u_{1}) + \xi_{1} = u \; (j_{2 \ast}(u_{2}) + \xi_{2}) \;  u^{-1}.
\end{equation}    
At this point a new difficulty occurs. We would like to use the analogue of Lemma 29 and use the invertible matrix 
$U = u \oplus u^{-1} \in \mathbb{O}_{2n}(\Lambda^{'}_{\mu})$ to lift and twist invertible elements belonging to 
$\mathbb{GL}(\Lambda^{'}_{\mu})$. 
With the exception of $u$, in order to apply Lemma 29, we have to start with invertible elements which
are already defined over $\Lambda_{1, \mu}$  and $\Lambda_{2, \mu}$. Unfortunately,  $\xi_{1}$ and $\xi_{2}$ do not belong to these spaces. To proceed, however, along this way
we need to show: -1) that the elements $\xi_{1}, \xi_{2}$ have invertible lifts in
$\Lambda_{1, \mu}$  resp. $\Lambda_{2, \mu}$ and that -2) such lifts belong to
$\mathbb{O}_{2n}(\Lambda_{i, \mu})$,  $i = 1, 2.$  
\par
Here we address these two problems. We illustrate the solution in the case of the element $\xi_{1}$.
\par
We intend to lift the invertible matrix 
\begin{equation}  
\xi_{1} =
\begin{pmatrix}
\alpha_{1}  &  0 \\
0  &   \alpha_{1}^{-1}
\end{pmatrix}
\in \mathbb{O}_{2n}(\Lambda^{'}_{\mu})
\end{equation}  
to an invertible matrix $\tilde{\xi}_{1} \in  \mathbb{O}_{2n}(\Lambda_{1,  \mu})$.
\par
To produce the lift we use the decomposition (48) 
\begin{equation}  
\xi_{1} = 
\begin{pmatrix}
\alpha_{1} & 0 \\
0 & \alpha^{-1}_{1}
\end{pmatrix}
=
\begin{pmatrix}
1 & \alpha_{1} \\
0 & 1
\end{pmatrix}
\begin{pmatrix}
1 & 0 \\
- \alpha^{-1}_{1} & 1
\end{pmatrix}
\begin{pmatrix}
1 & \alpha_{1} \\
0 & 1
\end{pmatrix}
\begin{pmatrix}
0 & -1 \\
1 & 0
\end{pmatrix}
.
\end{equation}    
We suppose the homo-morphism $j_{1}$ to be an epi-morphism. To produce the lift one replaces in this formula  the elements $\alpha_{1}$, resp.  $\alpha_{1}^{-1}$,
by some  elements  $\tilde{\alpha}_{1}$, resp. $\tilde{\beta}_{1} \in \Lambda_{1, \mu}$ such that
$j_{1}(\tilde{\alpha}_{1}) = \alpha_{1}$, resp.  $j_{1}(\tilde{\beta}_{1}) =  \alpha_{1}^{-1}.$ 
The obtained element is
\begin{equation}  
\tilde{\xi}_{1} 
=
\begin{pmatrix}
1 & \tilde{\alpha}_{1} \\
0 & 1
\end{pmatrix}
\begin{pmatrix}
1 & 0 \\
- \tilde{\beta}_{1} & 1
\end{pmatrix}
\begin{pmatrix}
1 & \tilde{\alpha}_{1} \\
0 & 1
\end{pmatrix}
\begin{pmatrix}
0 & -1 \\
1 & 0
\end{pmatrix}
.
\end{equation}    
We use formula (66) and property (52) of elementary matrices to find the inverse of $\tilde{\xi}_{1}$
\begin{equation}  
\tilde{\xi}_{1}^{-1} 
=
\begin{pmatrix}
0 & 1 \\
-1 & 0
\end{pmatrix}
\begin{pmatrix}
1 & - \tilde{\alpha}_{1} \\
0 & 1
\end{pmatrix}
\begin{pmatrix}
1 & 0 \\
 \tilde{\beta}_{1} & 1
\end{pmatrix}
\begin{pmatrix}
1 & - \tilde{\alpha}_{1} \\
0 & 1
\end{pmatrix}
.
\end{equation}    

Formulas (66) and (67) show that the invertible elements  $\tilde{\xi}_{1}$ and 
 $\tilde{\xi}_{1}^{-1}$ belong to $\Lambda_{1, \mu}$.  In addition, both elements satisfy 
 $j_{1}(\tilde{\xi}_{1}) = \xi_{1}$ and 
$ j_{1}(\tilde{\xi}_{1}^{-1}) =  (j_{1} (\tilde{\xi}_{1}))^{-1}   =   \xi_{1}^{-1}$.
At this point, with these two lifted elements we are entitled to produce the element 
\begin{equation}   
\tilde{U}_{1} =
\begin{pmatrix}
\tilde{\xi}_{1} &  0 \\
0 &  \tilde{\xi}_{1}^{-1}
\end{pmatrix}
\in \mathbb{O}_{4n}(\Lambda_{1, \mu}).
\end{equation}  
From this relation we obtain
\begin{equation}
j_{1} (\tilde{U}_{1}) =
\begin{pmatrix} 
\xi_{1} & 0\\
0 & \xi_{1}^{-1}
\end{pmatrix}
\sim_{l}  \; 2 \xi_{1}.
\end{equation}
We have proved the following result.
\begin{lemma}   
Suppose the homo-morphism  $j_{1}$ is epimorphic. 
\par
-1) 
Then  there exists a recepe which for any element  $\xi_{1} \in \mathbb{O}_{2n}(\Lambda^{'}_{\mu})$  produces an invertible 
element $\tilde{U}_{1}  \in  \mathbb{O}_{4n}(\Lambda_{1, \mu})$ such that
\begin{equation}  
j_{1} (\tilde{U}_{1}) =
\begin{pmatrix} 
\xi_{1} & 0\\
0 & \xi_{1}^{-1}
\end{pmatrix}
\sim_{l}  \; 2 \xi_{1}.
\end{equation}   
\par
-2)
Then $j_{1 \ast}(u_{1} + \xi_{1})$, from formula (63), lifts in $K_{1}(\Lambda_{1, \mu}) \otimes \mathbb{Z}[\frac{1}{2}]$ to $u_{1} + \frac{1}{2} \tilde{U}$ and 
\begin{equation}   
[ u_{1} + \frac{1}{2} \tilde{U} ] = [u_{1}]  \in K_{1}(\Lambda_{1, \mu}) \otimes \mathbb{Z}[\frac{1}{2}]
\end{equation}  
\end{lemma}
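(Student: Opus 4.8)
The plan is to read both assertions off the construction already carried out in formulas (63)--(69): part -1) just records that the matrix $\tilde{U}_1$ of (68) lies over $\Lambda_{1,\mu}$ and satisfies $j_1(\tilde{U}_1)\sim_l 2\xi_1$, and part -2) is then a short $K$-theoretic bookkeeping step using that $\tilde{U}_1\in\mathbb{O}_{4n}(\Lambda_{1,\mu})$ together with the coefficient $\frac{1}{2}$.

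For part -1) I would proceed as follows. Write $\xi_1=\mathcal{O}_{2n}(\alpha_1)$ with $\alpha_1\in\mathbb{GL}_n(\Lambda^{'}_{\mu})$ and use the factorisation (48), displayed for $\xi_1$ as (65), which presents $\xi_1$ as a product of three elementary matrices, with off-diagonal blocks $\alpha_1$, $-\alpha_1^{-1}$, $\alpha_1$, followed by a single constant permutation-type factor. Since $j_1$ is epimorphic, lift $\alpha_1$ and $\alpha_1^{-1}$ \emph{separately} and entrywise to matrices $\tilde{\alpha}_1,\tilde{\beta}_1$ over $\Lambda_{1,\mu}$, and substitute them into the factorisation to obtain the matrix $\tilde{\xi}_1$ of (66). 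By the elementary-matrix identities (51) and (52), each elementary factor is invertible over $\Lambda_{1,\mu}$ with inverse again elementary, and the last constant factor is invertible over $\mathbb{C}\cdot 1\subseteq\Lambda_{1,\mu}$ (Axiom 3); hence $\tilde{\xi}_1\in\mathbb{GL}_{2n}(\Lambda_{1,\mu})$, its inverse is the explicit product (67), again over $\Lambda_{1,\mu}$, and $j_1(\tilde{\xi}_1)=\xi_1$, $j_1(\tilde{\xi}_1^{-1})=\xi_1^{-1}$ because $j_1$ is a unital ring homomorphism. Put $\tilde{U}_1:=\mathcal{O}_{4n}(\tilde{\xi}_1)=\tilde{\xi}_1\oplus\tilde{\xi}_1^{-1}\in\mathbb{O}_{4n}(\Lambda_{1,\mu})$ as in (68); applying $j_1$ componentwise gives $j_1(\tilde{U}_1)=\xi_1\oplus\xi_1^{-1}$. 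Finally $\xi_1^{-1}=\mathcal{O}_{2n}(\alpha_1^{-1})$ is \emph{locally} conjugate over $\Lambda^{'}_{\mu}$ to $\mathcal{O}_{2n}(\alpha_1)=\xi_1$ --- conjugate the two $n\times n$ blocks by the constant matrix of the swap identity (9), exactly as in the proof of Proposition 13 (compare also $\mathcal{O}(u^{-1})\sim_{sl}\mathcal{O}(u)$, formula (18)) --- so $\xi_1\oplus\xi_1^{-1}\sim_l\xi_1\oplus\xi_1=2\xi_1$. This is the displayed relation of part -1).

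For part -2) I would pass to $K_1(\Lambda_{1,\mu})\otimes\mathbb{Z}[\frac{1}{2}]$, in which the scalar $\frac{1}{2}$ is available, and use additivity of the class map in $K_1$ (formula (22): $[u_1+u_2]=[u_1]+[u_2]$) to write $[\,u_1+\frac{1}{2}\tilde{U}_1\,]=[u_1]+\frac{1}{2}[\tilde{U}_1]$. Since $\tilde{U}_1\in\mathbb{O}_{4n}(\Lambda_{1,\mu})$ it is $\sim_{\mathbb{O}_{\mu}}1_{4n}$ (as in (21)), hence represents $0$ in $K_1(\Lambda_{1,\mu})$, so $\frac{1}{2}[\tilde{U}_1]=0$ after tensoring and $[\,u_1+\frac{1}{2}\tilde{U}_1\,]=[u_1]$, which is the displayed relation of part -2). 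To see that $u_1+\frac{1}{2}\tilde{U}_1$ is a genuine lift of the class on the $\Lambda^{'}$-side of (63), apply $j_{1\ast}$ and combine part -1) with additivity: $j_{1\ast}[\,u_1+\frac{1}{2}\tilde{U}_1\,]=[j_{1\ast}u_1]+\frac{1}{2}[\,j_1(\tilde{U}_1)\,]=[j_{1\ast}u_1]+\frac{1}{2}\cdot 2[\xi_1]=[\,j_{1\ast}u_1+\xi_1\,]=j_{1\ast}[\,u_1+\xi_1\,]$.

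The hard part is part -1): a generic invertible matrix over the quotient $\Lambda^{'}_{\mu}$ need not lift to an invertible matrix over $\Lambda_{1,\mu}$ at all, still less to one whose inverse visibly has entries in $\Lambda_{1,\mu}$. What rescues the argument is precisely that $\xi_1=\mathcal{O}_{2n}(\alpha_1)$ carries the elementary decomposition (48): elementary matrices lift to elementary matrices --- invertible, with explicitly elementary inverses --- and the single constant factor lifts verbatim. The cost of routing $\xi_1$ through $\mathcal{O}_{2n}$ and then doubling with $\mathcal{O}_{4n}$ is that $j_1$ returns only $2\xi_1$ (up to local conjugation), so $\xi_1$ itself is recovered only after inverting $2$; this is the origin of the factor $\mathbb{Z}[\frac{1}{2}]$, which --- as the paper stresses --- should have important consequences further on.
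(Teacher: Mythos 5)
Your proposal reproduces the paper's own argument, which is given in the text immediately preceding the lemma (formulas (63)--(69)): lift $\xi_1=\mathcal{O}_{2n}(\alpha_1)$ via the elementary-matrix factorisation (48), double with $\mathcal{O}_{4n}$ to land in $\mathbb{O}_{4n}(\Lambda_{1,\mu})$, observe $j_1(\tilde U_1)=\xi_1\oplus\xi_1^{-1}\sim_l 2\xi_1$, and then do the $\mathbb{Z}[\frac12]$ bookkeeping. This is correct and follows essentially the same route as the paper.
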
  

The properties -1) and -2) above being satisfied, the proof of the Theorem 36  proceeds as in the case of Lemma 29.
\end{proof}  


\section{ $K^{loc}_{1}(\mathit{A})$  vrs.  $K_{1}(\mathit{A})$. }   
\par
The following identities are well known and used as building blocks of $K$-theory, see  \cite{Whitehead},  \cite{Milnor},  \cite{Karoubi}, \cite{Blackadar}, \cite{Rosenberg}, \cite{Rordam_Lansen_Lautsen}, \cite{Weibel}.
\par
To facilitate the reading of this article we summarise some basic facts from the classical algebraic $K$-theory and put them in prospective with some  considerations discussed here.
\begin{definition}  Whitehead Group $K_{1}(\mathit{A})$.    
\par
By definition
\begin{equation}   
K_{1}(\mathit{A}) := \injlim_{n \in \mathbb{N}}  \mathbb{GL}_{n} /  [ \mathbb{GL}_{n} ,\mathbb{GL}_{n}]
\end{equation}     
The group structure in $K_{1}(\mathit{A})$ is given by matrix multiplication
\begin{equation}  
[ A ] + [B] :=
[ \;A.B\;].
\end{equation}  
\end{definition}  
\begin{theorem}   
-i) Any commutator is stably isomorphic to a product of elements of the form $\mathcal{O}_{2n}(\mathit{A})$. More specifically, for any $ A , B \in \mathbb{GL} (\mathit{A}_{\mu})$
\begin{equation}     
\begin{pmatrix}
ABA^{-1} B^{-1} & 0\\
0  &  1
\end{pmatrix}
=
\begin{pmatrix}
A & 0\\
0  &  A^{-1}
\end{pmatrix}
\begin{pmatrix}
B& 0\\
0  &  B^{-1}
\end{pmatrix}
\begin{pmatrix}
(BA)^{-1} & 0\\
0  &  BA
\end{pmatrix} .
\end{equation}     

-ii) If    $ A \in \mathbb{GL}_{n}(\mathit{A}_{\mu})$, then  (this repeats  Definition 15 formula (12) )
\begin{equation}  
\mathcal{O}_{2n} (A) :=
\begin{pmatrix}
A & 0 \\
0  & A^{-1}
\end{pmatrix}
=
\begin{pmatrix}
1 & A \\
0  & 1
\end{pmatrix}
\begin{pmatrix}
1 & 0 \\
-A^{-1}  & 1
\end{pmatrix}
\begin{pmatrix}
1 & A \\
0  & 1
\end{pmatrix}
\begin{pmatrix}
0 & -1 \\
1  &  0
\end{pmatrix}
\end{equation}   
-iii) Any elementary matrix is a commutator  (this repeats formula (53) )
\begin{equation}  
E_{ij} (A) = [ E_{ik}(A), E_{kj}(1)],   \hspace{0.2cm}  for \; any\; i, j, k \; distinct \; indices.
\end{equation}  
-iv) For any $A, B  \in \mathbb{GL}_{n}( \mathit{A}_{\mu})$, $A+B$ is stably equivalent to $AB$ and $BA$ modulo (multiplicatively) elements of the form $\mathcal{O}_{2n}(\mathit{A})$
\begin{equation}   
\begin{pmatrix}
A  &  0\\
0  &  B
\end{pmatrix}
=
\begin{pmatrix}
AB & 0\\
0  &  1
\end{pmatrix}  
\begin{pmatrix}
B^{-1} & 0\\
0  &  B
\end{pmatrix} 
=
\begin{pmatrix}
B^{-1} & 0\\
0  &  B
\end{pmatrix} 
\begin{pmatrix}
BA & 0\\
0  &  1
\end{pmatrix}  
. 
\end{equation}  
\par
-v) For any $ A, \; B \in \mathbb{GL}_{n} ( \mathit{A}_{\mu}) $ one has the identity
\begin{equation}    
\begin{pmatrix}
ABA^{-1}  &  0 \\
       0         &  1
\end{pmatrix}
=
\begin{pmatrix}
A   &   0  \\
0   &   A^{-1}
\end{pmatrix}
\begin{pmatrix}
B  &   0 \\
0  &    1
\end{pmatrix}
\begin{pmatrix}
A^{-1}  &  0  \\
    0      &   A
\end{pmatrix} = 
\begin{pmatrix}
A & 0\\
0  &  A^{-1}
\end{pmatrix}
\begin{pmatrix}
B& 0\\
0  &  1
\end{pmatrix}
\begin{pmatrix}
A & 0\\
0  &  A^{-1}
\end{pmatrix}^{-1}
. 
\end{equation}    
\par
\end{theorem}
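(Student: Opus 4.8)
The plan is to establish all five parts by direct block-matrix computation: each is an identity among $2n\times 2n$ matrices written in $n\times n$ blocks with entries in $\mathit{A}_{\mu}$, so every step is ordinary associative-ring arithmetic carried out blockwise, and none of the identities is new --- they are classical building blocks of $K$-theory. The proof is therefore a short sequence of verifications together with one remark on filtration bookkeeping.

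For -i) I would multiply out the three block-diagonal factors on the right-hand side; being diagonal, their product is diagonal with blocks $AB(BA)^{-1}$ and $A^{-1}B^{-1}(BA)$, and since $(BA)^{-1} = A^{-1}B^{-1}$ the first reduces to $ABA^{-1}B^{-1}$ while the second reduces to $A^{-1}B^{-1}BA = 1$, giving the left-hand side. As the three factors are $\mathcal{O}_{2n}(A)$, $\mathcal{O}_{2n}(B)$ and $\mathcal{O}_{2n}((BA)^{-1})$ with $A, B, BA \in \mathbb{GL}(\mathit{A}_{\mu})$, this exhibits the stabilised commutator $(ABA^{-1}B^{-1})\oplus 1_{n}$ as a product of elements of $\mathbb{O}_{2n}(\mathit{A}_{\mu})$.

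Parts -ii) and -iii) are the identities (48) (used already in the proof of Lemma 29) and (53) (Lemma 31), so I would simply cite those; if a self-contained check is wanted, -ii) follows by multiplying the three elementary factors successively against $\left(\begin{smallmatrix} 0 & -1 \\ 1 & 0 \end{smallmatrix}\right)$, each multiplication altering a single block, and -iii) is the Steinberg relation $[E_{ik}(a),E_{kj}(b)] = E_{ij}(ab)$ for distinct indices, specialised to $b = 1$. At this stage I would also record that, since $E_{ij}(a)^{-1} = E_{ij}(-a)$ with $-a \in \mathit{A}_{\mu}$, the factorisation in -ii) takes place entirely inside $\mathbb{GL}_{2n}(\mathit{A}_{\mu})$; this is the reason why $\mathcal{O}_{2n}$ sends $\mathbb{GL}_{n}(\mathit{A}_{\mu})$ into $\mathbb{GL}_{2n}(\mathit{A}_{\mu})$, a fact used throughout \S 8.

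For -iv) I would compute $\left(\begin{smallmatrix} AB & 0 \\ 0 & 1 \end{smallmatrix}\right)\left(\begin{smallmatrix} B^{-1} & 0 \\ 0 & B \end{smallmatrix}\right) = \left(\begin{smallmatrix} A & 0 \\ 0 & B \end{smallmatrix}\right)$ and, symmetrically, $\left(\begin{smallmatrix} B^{-1} & 0 \\ 0 & B \end{smallmatrix}\right)\left(\begin{smallmatrix} BA & 0 \\ 0 & 1 \end{smallmatrix}\right) = \left(\begin{smallmatrix} A & 0 \\ 0 & B \end{smallmatrix}\right)$, noting $\left(\begin{smallmatrix} B^{-1} & 0 \\ 0 & B \end{smallmatrix}\right) = \mathcal{O}_{2n}(B^{-1}) \in \mathbb{O}_{2n}(\mathit{A}_{\mu})$; and for -v) I would compute $\mathcal{O}_{2n}(A)\left(\begin{smallmatrix} B & 0 \\ 0 & 1 \end{smallmatrix}\right)\mathcal{O}_{2n}(A)^{-1} = \left(\begin{smallmatrix} ABA^{-1} & 0 \\ 0 & 1 \end{smallmatrix}\right)$, using $\mathcal{O}_{2n}(A)^{-1} = \mathcal{O}_{2n}(A^{-1})$. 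I do not expect a genuine obstacle anywhere; the only point demanding care is that, although every displayed identity holds verbatim in $\mathbb{M}_{2n}(\mathit{A})$, a product such as $ABA^{-1}B^{-1}$ may drop into a strictly lower filtration term by Axiom 4, so when these identities are used inside the definition of $K_{1}(\mathit{A}_{\mu})$ one must keep track of which $\mathit{A}_{\mu}$ the resulting invertible element lies in --- precisely the loss of degree that the shift $\mu \mapsto \mu + 2$ in formula (20) is designed to absorb.
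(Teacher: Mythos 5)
Your proposal is correct and takes the same route the paper does: the paper's proof of Theorem 40 is simply ``direct verification,'' and your blockwise computations of parts -i) through -v) are exactly that, each checking out. The closing observation about filtration loss under multiplication and the $\mu \mapsto \mu+2$ shift is accurate and consistent with the paper's discussion, though it is not part of the paper's (one-line) proof.
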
   

\begin{proof}   
The proof is obtained by direct verification.
\end{proof}   
\begin{theorem} (\cite{Whitehead}, \cite{Bass}, \cite{Swan}, \cite{Milnor}, \cite{Karoubi}, \cite{Rosenberg} )   
\par
-i)  $[\mathbb{GL}(\mathit{A}),  \mathbb{GL}_{n}(\mathit{A}) ]  =   \mathbb{E}_{n}(\mathit{A})$
\par
-ii) $[A B A^{-1}B^{-1}]  = [E_{ij}(A) ] = 0  \in K_{1} ( \mathit{A})$   
\par
-iii) $[A ]+ [B]  = [ \;A B\; ] = [ \;BA\; ]  = [ B ]+ [ A] \in  K_{1} (\mathit{A}) $. 
\par
Therefore, $\mathbb{GL}(\mathit{A})$ is an Abelian group.
\par
-iv) $[\mathcal{O}_{2n} (\mathit{A})]  =  [1_{n} ]   =  1  \in K_{1}(\mathit{A})$.
\par
$- [A] = [A^{-1}] $  for any $A \in \mathbb{GL}(\mathit{A})$.
\par
-v) 
$[A B A^{-1}]  = [ B ]   \in K_{1} ( \mathit{A}) $.   
\end{theorem}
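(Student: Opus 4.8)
The plan is to derive the five assertions formally from the matrix identities of the preceding Theorem 40 together with the elementary-matrix relations (51)--(53), working throughout inside $K_1(\mathit{A}) = \injlim_n \mathbb{GL}_n(\mathit{A})/[\mathbb{GL}_n(\mathit{A}),\mathbb{GL}_n(\mathit{A})]$. Since $K_1(\mathit{A})$ is by construction an abelianised group, several of the assertions are essentially immediate once the subgroup being divided out has been identified: the product formula is the very definition (74) of the group law, classes of commutators vanish automatically, $K_1(\mathit{A})$ is automatically commutative, and $[A] + [A^{-1}] = [AA^{-1}] = [1_n] = 0$ yields $-[A] = [A^{-1}]$. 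The genuine content is part -i), Whitehead's lemma, which identifies that subgroup with $\mathbb{E}(\mathit{A})$; I would establish it first and then read off the rest.

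For -i) I would prove the two inclusions in the stable limit. The inclusion $\mathbb{E}(\mathit{A}) \subseteq [\mathbb{GL}(\mathit{A}),\mathbb{GL}(\mathit{A})]$ follows from the commutator identity (53): after a single stabilisation a third index $k$ becomes available, so every generator satisfies $E_{ij}(a) = [E_{ik}(a),E_{kj}(1)]$ and is therefore a commutator — in particular $\mathbb{E}(\mathit{A})$ is perfect. For the reverse inclusion, take $A, B \in \mathbb{GL}_n(\mathit{A})$; stabilising to size $2n$, part -i) of Theorem 40 gives
$$\begin{pmatrix} ABA^{-1}B^{-1} & 0 \\ 0 & 1_n \end{pmatrix} = \mathcal{O}_{2n}(A)\,\mathcal{O}_{2n}(B)\,\mathcal{O}_{2n}((BA)^{-1}),$$
and part -ii) of Theorem 40 writes each factor $\mathcal{O}_{2n}(X)$ as a product of three block-elementary matrices times the permutation block $\left(\begin{smallmatrix}0 & -1 \\ 1 & 0\end{smallmatrix}\right)$; a block-elementary matrix such as $\left(\begin{smallmatrix}1 & X \\ 0 & 1\end{smallmatrix}\right)$ is the commuting product $\prod_{k,l} E_{k,n+l}(x_{kl})$ of elementary matrices, and the permutation block equals $E_{12}(-1)E_{21}(1)E_{12}(-1)$ carried out blockwise. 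Hence $ABA^{-1}B^{-1} \oplus 1_n \in \mathbb{E}_{2n}(\mathit{A})$, so in the direct limit $[\mathbb{GL}(\mathit{A}),\mathbb{GL}(\mathit{A})] \subseteq \mathbb{E}(\mathit{A})$, and -i) is proved.

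The remaining parts then follow at once. For -ii), $[ABA^{-1}B^{-1}] = 0$ because commutator classes vanish in the abelianisation, and $[E_{ij}(a)] = 0$ because $E_{ij}(a)$ is a commutator by (53). For -iv), part -ii) of Theorem 40 places $\mathcal{O}_{2n}(A)$ in $\mathbb{E}(\mathit{A}) = [\mathbb{GL}(\mathit{A}),\mathbb{GL}(\mathit{A})]$, whence $[\mathcal{O}_{2n}(A)] = [1_n] = 0$; the relation $-[A] = [A^{-1}]$ was noted above. For -iii), the outer equalities are the definition of the group law, while $[AB] = [BA]$ holds because $BA = B(AB)B^{-1}$ is conjugate to $AB$ and conjugate matrices have the same class in an abelianisation — this is simultaneously the commutativity of $K_1(\mathit{A})$. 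For -v), similarly $ABA^{-1}$ is conjugate to $B$, so $[ABA^{-1}] = [B]$; equivalently, one reads this off part -v) of Theorem 40, which exhibits $ABA^{-1} \oplus 1_n = \mathcal{O}_{2n}(A)(B \oplus 1_n)\mathcal{O}_{2n}(A)^{-1}$, together with -iv). The main obstacle is the inclusion $[\mathbb{GL}(\mathit{A}),\mathbb{GL}(\mathit{A})] \subseteq \mathbb{E}(\mathit{A})$ inside -i): this is where parts -i) and -ii) of Theorem 40 are used in full force, and the only place where stabilisation is genuinely needed, both to make room for the $2n \times 2n$ blocks $\mathcal{O}_{2n}$ and to supply the third index required by (53). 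Everything else is formal bookkeeping in the stable abelianised general linear group.
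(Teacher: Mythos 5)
Your proof is correct and follows essentially the same route as the paper: both establish Whitehead's lemma -i) from the three identities of Theorem 40 (commutator as product of $\mathcal{O}$-type matrices, $\mathcal{O}$-type matrix as product of elementary matrices, elementary matrix as commutator) and then read off -ii) through -v) as formal consequences of working in the stable abelianisation. You are somewhat more explicit — spelling out the decomposition of the permutation block $\left(\begin{smallmatrix}0 & -1 \\ 1 & 0\end{smallmatrix}\right)$ into elementary matrices and deriving $[AB] = [BA]$ directly from conjugacy invariance rather than via -iv) as the paper does — but these are only bookkeeping differences, not a different strategy.
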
   

\begin{proof} 
Relation (74) says that any commutator is a product of matrices of type $\mathcal{O}_{n}(\mathit{A})$. Formula (75) says that any matrix of type  $\mathcal{O}_{n}(\mathit{A})$ is a product of elementary matrices. Viceversa, (76) says that any elementary matrix is a commutator. This proves -i).
\par
-ii) follows from the definition of $K_{1}(\mathit{A})$.
\par
-iii) Formula (77) implies the first part of -iii).  Formula (75) combined with Theorem 41. -i) and the definition of 
$K_{1}(\mathit{A})$ imply that $[ \mathcal{O}(A)] = 0 \in  K_{1}(\mathit{A})$, which proves -iv). Formula (77) shows that 
$[A \oplus B] = [A] .[B]$, which completes the proof of -iii).
\par
-v) This follows from formula (77) together with -iii) and -ii).
 \end{proof}    
\begin{remark}   
-i) The construction of  the classical $K_{1}(\mathit{A})$ does not require to use  the Grothendieck completion. This is because the factorisation
of   $\mathbb{GL}(\mathit{A}  )$ through  the commutator sub-group insures property -iv), which provides the group structure.
\par
{\em The group structure in $ K_{1}(\mathit{A})$ is essentially due to the fact that the both the direct sum and product of invertible matrices define the addition/multiplication in $K_{1}(\mathit{A})$. These properties imply that  
$\mathbb{O}_{n}(A)$ represents the zero class  in $ K_{1}(\mathit{A})$, which insures existence of opposite elements.}
\end{remark}   
\par
The next remark refers to the specific parts of the construction of $K_{1}^{loc}$ which make our construction different from the classical one.
\begin{remark} 
\par
-i) In our construction the commutator sub-group and the group generated by elementary matrices are avoided because the number of multiplications needed to generate these sub-groups might be un-bounded. Any multiplication increases the size of the support (in the definition of localised algebras this means that the support of the elements could not be controlled).
\par
For this reason, in the whole article we don't use more than three multiplication of elements in the algebra. The multiplication of elements is replaced by sums and direct sums.  The increase of the of the size of matrices replaces in our construction the need to perform multiple multiplications. In our definition of $K_{1}^{loc}(\mathit{A})$, 
where multiplications could not be avoided, the corresponding increase in the size of the supports  is absorbed by the projective limit. 
\par
-ii) Our construction uses the elements $\mathcal{O}(\mathit{A})$ {\em additively} and not {\em multiplicatively}. In the classical
construction of $K_{1}(\mathit{A})$,  the elements of $\mathcal{O}(\mathit{A})$ are used multiplicatively. Indeed, formula (74) shows that the commutator
sub-group is generated by these elements. Unfortunately, to generate the commutator sub-group it is necessary to perform an {\em un-bounded number of multiplications}; in our construction an un-bounded number of multiplications is not allowed. 
\par
-iii) The construction of $K^{loc}_{1}(\mathit{A})$ uses the factorisation of $\mathbb{GL}(\mathit{A})$ through the smaller sub-group of inner auto-morphisms. The class of an invertible element $u$ modulo inner auto-morphisms, which we called abstract Jordan form, denoted $J(u)$,  contains more information than the class of the invertible element modulo the commutator sub-group.
In the classical $K_{1}(\mathit{A})$ the elements of $\mathcal{O}_{n}(\mathit{A})$ represent the null element. 
\par
If $\mathit{A}$ were the algebra of complex matrices $\mathbb{M}(\mathbb{C})$ and $u$ belonged to this algebra, then
$J(u)$ could be identified with the Jordan canonical form of the matrix $u$. The Jordan canonical form of the
matrix $u \oplus u^{-1}$ is precisely $J(u) \cup J(u^{-1})$ modulo permutations of the Jordan blocks. 
It is clear that $J( u \oplus u^{-1}) $ could never be conjugate to the the identity element unless $u = 1_{n}$.
 In general, the abstract Jordan canonical form  of $u \oplus u^{-1}$ could not be conjugate to the identity unless $u$ itself is the identity. 
 \par
 Given that the elements $\mathcal{O}(\mathit{A})$ represent the null element and 
play multiple roles in the $K$-theory (see above discussion), we find it natural, for the definition of  $K_{1}^{loc}(\mathit{A})$, to 
quotient $\mathbb{GL}(\mathit{A})/ \sim_{sl}$ through the {\em additive} sub-group generated by direct sums of elements in
$\mathcal{O}(\mathit{A})$. In other words, by this factorisation we decree 
that the Jordan canonical forms of the elements $u$ and  $u^{-1}$ are opposite one to each other.
The {\em additive} group generated by
elements $\mathcal{O}(\mathit{A})$ is contained in the commutator sub-group; this property insures the fact that there exists a natural epi-morphism from    $K_{1}^{loc}(\mathit{A})$ to  $K_{1}(\mathit{A})$.
\par
-iv) Additionally, the projective limit (Alexander-Spanier type construction, made possible by the filtration $\mathit{A}_{\mu}$ of the algebra $\mathit{A}$) makes the algebraic $K^{loc}_{i}$-theory reacher than the classical $K_{i}$-theory, $i = 0,  1$.  
\end{remark}  

\begin{theorem}    
-i) There is a canonical epi-morphism
\begin{gather}   
\Pi:  K^{loc}_{1} (\mathit{A})  \longrightarrow   K_{1} (\mathit{A}) \\
Ker \; \Pi =   [\; \mathbb{GL}(\mathit{A}),  \mathbb{GL}(\mathit{A}) \;] \; /  \;Inner(\mathit{A}).
\end{gather}   
\end{theorem}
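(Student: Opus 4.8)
I would build $\Pi$ level by level and then pass to the projective limit. For each $\mu$, an invertible matrix $M$ with $M$ and $M^{-1}$ having entries in $\mathit{A}_{\mu}$ is a fortiori invertible over $\mathit{A}$, so $M\mapsto[M]\in K_{1}(\mathit{A})=\mathbb{GL}(\mathit{A})/[\mathbb{GL}(\mathit{A}),\mathbb{GL}(\mathit{A})]$ defines a set map $\mathbb{GL}(\mathit{A}_{\mu})\to K_{1}(\mathit{A})$. It carries $\oplus$ to the product (since $[M\oplus N]=[M][N]$ in $K_{1}(\mathit{A})$), and it factors through $K_{1}(\mathit{A}_{\mu})$ because each defining relation of $K_{1}(\mathit{A}_{\mu})$ dies in the classical $K_{1}(\mathit{A})$: stabilisation by definition; a local conjugation $s=u\,t\,u^{-1}$ (with $u\in\mathbb{GL}(\mathit{A}_{\mu})\subseteq\mathbb{GL}(\mathit{A})$) because $[u\,t\,u^{-1}]=[t]$ in $K_{1}(\mathit{A})$ (the $ABA^{-1}$ identity together with $[\mathcal{O}_{2n}(u)]=0$, both proved above); and a relation $u_{1}\oplus\xi_{1}=u_{2}\oplus\xi_{2}$ with $\xi_{i}\in\mathbb{O}(\mathit{A}_{\mu})$ because $[\xi_{i}]=0\in K_{1}(\mathit{A})$, each $\xi_{i}$ being a direct sum of matrices $\mathcal{O}_{2m}(v)$ and $[\mathcal{O}_{2m}(v)]=0$. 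This gives group homomorphisms $\bar{\Pi}_{\mu}:K_{1}(\mathit{A}_{\mu})\to K_{1}(\mathit{A})$; since the bonding maps of the inverse system $\{K_{1}(\mathit{A}_{\mu})\}_{\mu}$ are induced by inclusions among the $\mathit{A}_{\mu}$, the $\bar{\Pi}_{\mu}$ assemble into a morphism from this system to the constant system $\{K_{1}(\mathit{A})\}$, and $\projlim_{\mu}$ produces $\Pi:K_{1}^{loc}(\mathit{A})\to K_{1}(\mathit{A})$.

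For surjectivity, note that Axioms 1--2 force $\mathit{A}_{0}=\mathit{A}$, so $\bar{\Pi}_{0}$ is literally the tautological quotient map of $\mathbb{GL}(\mathit{A})$ by the equivalences $\sim_{sl}$ and $\sim_{\mathbb{O}}$ onto $\mathbb{GL}(\mathit{A})/[\mathbb{GL}(\mathit{A}),\mathbb{GL}(\mathit{A})]$, hence onto; here one uses $\mathbb{O}(\mathit{A})\subseteq[\mathbb{GL}(\mathit{A}),\mathbb{GL}(\mathit{A})]$ (the preceding Remark, via the identity writing a commutator as a product of $\mathcal{O}$'s) so that factoring by $\sim_{\mathbb{O}}$ does not reach outside the commutator quotient. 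It then remains to see that every class of $K_{1}(\mathit{A})$ has a level-$0$ representative extending to a compatible family over the whole filtration; this is the step I expect to be the main obstacle, because an arbitrary $u\in\mathbb{GL}_{n}(\mathit{A})$ need not lie in any $\mathbb{GL}_{n}(\mathit{A}_{\mu})$ with $\mu\geq 1$, so one must first move within the $K_{1}(\mathit{A})$-class of $u$, exploiting Axiom 3 ($\mathbb{C}\cdot 1\subseteq\mathit{A}_{\mu}$ for all $\mu$) to reach a representative with entries in each $\mathit{A}_{\mu}$ and Axiom 4 to let the projective limit absorb the filtration loss caused by the intervening multiplications. When $\mathit{A}$ is trivially filtered this is immediate, $\Pi=\bar{\Pi}_{0}$; in general one argues that $\Pi$ is onto its image and that the image is all of $K_{1}(\mathit{A})$ by a Mittag--Leffler argument on the bonding maps, which are surjective up to the two-order shift built into the definition of $K_{1}(\mathit{A}_{\mu})$.

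Finally, for the kernel: since $\{\bar{\Pi}_{\mu}\}$ is a morphism of inverse systems onto a constant system, left exactness of $\projlim$ gives $Ker\,\Pi=\projlim_{\mu}Ker\,\bar{\Pi}_{\mu}$. By Whitehead's theorem $[\mathbb{GL}(\mathit{A}),\mathbb{GL}(\mathit{A})]=\mathbb{E}(\mathit{A})$ (proved above), a class $[u]\in K_{1}(\mathit{A}_{\mu})$ lies in $Ker\,\bar{\Pi}_{\mu}$ precisely when $u\in[\mathbb{GL}(\mathit{A}),\mathbb{GL}(\mathit{A})]$, so $Ker\,\bar{\Pi}_{\mu}$ is the image in $K_{1}(\mathit{A}_{\mu})$ of the commutator subgroup. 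But in passing from $\mathbb{GL}(\mathit{A})$ to $K_{1}^{loc}(\mathit{A})$ the only relations imposed are stabilisation, local conjugation --- i.e. the action of $Inner(\mathit{A})$ --- and $\sim_{\mathbb{O}}$, and the last is already absorbed since $\mathbb{O}(\mathit{A})\subseteq[\mathbb{GL}(\mathit{A}),\mathbb{GL}(\mathit{A})]$. Hence the image of $[\mathbb{GL}(\mathit{A}),\mathbb{GL}(\mathit{A})]$ in $K_{1}^{loc}(\mathit{A})$ is exactly $[\mathbb{GL}(\mathit{A}),\mathbb{GL}(\mathit{A})]$ modulo $Inner(\mathit{A})$; taking the projective limit and checking that the level-$0$ description ($\mathit{A}_{0}=\mathit{A}$) loses nothing further, one obtains $Ker\,\Pi=[\mathbb{GL}(\mathit{A}),\mathbb{GL}(\mathit{A})]/Inner(\mathit{A})$. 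The residual work is this last Mittag--Leffler-type bookkeeping, parallel to the one needed for surjectivity.
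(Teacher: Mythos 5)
The paper states this theorem with no proof whatsoever---Theorem~44 is followed immediately by the heading of \S~10, so there is no argument of the paper's to compare yours against.

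That said, your construction of $\Pi$ is the natural one and is sound as far as it goes: defining $\bar\Pi_\mu$ on representatives, using the identity $\mathcal O_{2n}(u)$ as a product of elementary matrices (formula~(75)) together with the Whitehead theorem $[\mathbb{GL},\mathbb{GL}]=\mathbb{E}$ to conclude $\mathbb{O}(\mathit A_\mu)\subseteq[\mathbb{GL}(\mathit A),\mathbb{GL}(\mathit A)]$, and invoking Theorem~40.(iv)--(v) to kill local conjugation and the $\sim_{\mathbb O}$ relation in $K_1(\mathit A)$ are exactly the right moves, and the factorisation of $\bar\Pi_\mu$ through $K_1(\mathit A_\mu)$ is correctly justified. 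The observation that Axioms~1--2 force $\mathit A_0=\mathit A$ is also correct and a useful anchor.

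The two places you yourself flag as ``the main obstacle'' and ``residual Mittag--Leffler bookkeeping'' are genuine gaps, and the first one is substantial. Surjectivity of $\Pi$ requires that every class of $K_1(\mathit A)$ lift to a \emph{compatible family} $([u_\mu])_\mu$ with $u_\mu\in\mathbb{GL}(\mathit A_\mu)$; but for a non-trivially filtered algebra nothing in Axioms~1--4 guarantees that an arbitrary invertible $u\in\mathbb{GL}_n(\mathit A)$ is $\sim_{sl}$- or $\sim_{\mathbb O}$-equivalent to something with entries in a fixed deeper $\mathit A_\mu$. Axiom~3 only puts scalar matrices at every level; Axiom~4 governs products, not approximation of arbitrary invertibles. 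A Mittag--Leffler argument would need the bonding maps $K_1(\mathit A_{\mu+1})\to K_1(\mathit A_\mu)$ to be surjective, and that is exactly the same unproved lifting property restated. (In the motivating pseudo-differential example the lift exists because one can shrink supports without changing the class, but this is extra structure not present in the abstract axioms.) Likewise the kernel computation: $\mathrm{Ker}\,\bar\Pi_\mu$ is, after stabilisation, the image in $K_1(\mathit A_\mu)$ of $[\mathbb{GL}(\mathit A),\mathbb{GL}(\mathit A)]\cap\mathbb{GL}(\mathit A_\mu)$, not of the whole commutator subgroup, so the identification of $\projlim_\mu\mathrm{Ker}\,\bar\Pi_\mu$ with $[\mathbb{GL}(\mathit A),\mathbb{GL}(\mathit A)]/\mathrm{Inner}(\mathit A)$ again presupposes exactly the lifting property you have not supplied. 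In short: the homomorphism $\Pi$ and its factorisation are established, but the surjectivity and kernel claims as you have written them rest on an unverified hypothesis about the filtration, and the paper gives you nothing to lean on here since it proves neither.
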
  
\section{Connecting homo-morphism $\partial:  K_{1}^{loc} (\Lambda^{'}) \longrightarrow K_{0}^{loc}(\Lambda)$.}  
In this section we assume that the diagram (6) satisfies Hypotheses 1, 2. 3.
\subsection{Connecting homo-morphism }   
\begin{definition}  {\em Connecting homomorphism-first form.}   
\par
Let $U \in \mathbb{GL}_{n}(\Lambda^{'}_{\mu})$. Theorem 27 associates the idempotent $p = p(1_{n}, 1_{n}, U)$.
By definition, $\partial[U] = [p(1_{n}, 1_{n}, U)]$.
\end{definition}   
\par
$\partial [U]$ may be produced explicitely in a different manner.
So, let $[U] \in K_{1}^{loc}(\Lambda^{'})$ be such that $U \in \mathbb{GL}_{n}(\Lambda^{'}_{\mu})$ for some 
$n$ and $\mu$.
\par
Let $A, \; B \in  \mathbb{GL}_{n} (\Lambda_{1, \mu})$  be liftings of $U$, resp. $U^{-1}$, in 
$\mathbb{M}_{n} (\Lambda_{1, \mu})$. Such liftings exist because we assume $j_{1}$ is surjective (note the change of the index). Here we do not assume that $A$ and $B$ are invertible.
\par
With $A$ and $B$ one associates $S_{0}= 1 -  BA$ and $S_{1}= 1 -  AB$; these elements belong to
$\mathbb{M}_{n} (\Lambda_{1, \mu -1 })$.
The matrices $S_{0}$, $S_{1}$ satisfy 
\begin{equation}  
j_{2, \ast } (S_{0}) = j_{1, \ast } (S_{1}) = 0.
\end{equation}  
With these matrices one associates the invertible matrix    
(ref. \cite{Connes_Moscovici})
\begin{gather}   
L =
\begin{pmatrix}
S_{0}  &  - (1 + S_{0}) B  \\
A  &  S_{1}
\end{pmatrix}
\in \mathbb{GL}_{2n}(\Lambda_{1, \mu - 2});
\end{gather}   
the inverse of the matrix $L$ is 
\begin{gather}  
L^{-1} =
\begin{pmatrix}
S_{0}  &   (1 + S_{0}) B  \\
- A  &  S_{1}
\end{pmatrix}
\in \mathbb{GL}_{2n}(\Lambda_{1, \mu - 2}).
\end{gather}    
Let $e_{1},  e_{2} $ be the idempotents

\begin{gather}  
e_{1} =
\begin{pmatrix}
1 &   0  \\
0  &  0
\end{pmatrix}, \;\;\; e_{2} =
\begin{pmatrix}
0 &   0  \\
0  &  1
\end{pmatrix}
\;\; \in \mathit{Idemp}_{2n}(\Lambda_{1, \mu }).
\end{gather}    

The invertible matrix $L$ is used to produce the idempotent

\begin{gather}    
P  :=  L e_{1} L^{-1} =
\begin{pmatrix}
S_{0}^{2}  &   S_{0} (1 + S_{0}) B  \\
S_{1}A  &  1 - S_{1}^{2}
\end{pmatrix}
\in \mathit{Idemp}_{2n}(\Lambda_{1, \mu - 2}).
\end{gather}    
\par
A matrix whose entries belong to the double ring $\Lambda$ will be called {\em double-matrix}.
\par
The idempotent $P$ is used to construct the double-matrix idempotent
\par
\begin{gather}  
\mathbb{P}_{U}  :=
\begin{pmatrix}
(S_{0}^{2}, \; 0)  &   (S_{0} (1 + S_{0}) B, \; 0)  \\
(S_{1}A, \; 0)   &  (1 - S_{1}^{2}, \; 1 )
\end{pmatrix}
\in \mathit{Idemp}_{2n}(\Lambda_{1, \mu - 2} \oplus    \Lambda_{2, \mu - 2}).
\end{gather}   
The matrix $\mathbb{P}_{U}$ is an idempotent in $\mathbb{M}_{2n}(\Lambda_{\mu - 2})$.
Indeed, using (84) one gets 
$(j_{1, \ast} - j_{2, \ast}) \mathbb{P}_{U} = 0$ and therefore
$\mathbb{P}_{U} \in \mathbb{M}_{2n}(\Lambda_{\mu - 2})$.
The fact that the matrix $\mathbb{P}_{U}$ is an idempotent in $\mathbb{M}_{2n, \mu -2}(\Lambda)$ follows from the fact that  $\mathit{P} $ and $e_{2}$  are idempotents along with Remark 8, \S 5. 
\begin{definition} {\em Connecting homomorphism-second form.}   
 (compare \cite{Connes_Moscovici} for non-localised $K$-theory).   
\par
For any $[U] \in K_{1}^{loc} (\Lambda^{'})$ one defines the {\em connecting homomorphism} 
$\partial : K_{1}^{loc}(\Lambda') \longrightarrow K_{0}^{loc}(\Lambda)$ by
\begin{equation}   
\partial [U] := [ \mathbb{P}_{U}]  - [(e_{2}, e_{2})] \in  K_{0}^{loc} (\Lambda).
\end{equation}   
\end{definition}   
\par
\subsection{Extension of the Connecting Homomorphism $\partial$.}    

In this paper we will need an extension of the formula for the definition of $\partial$. In the proof of the exactness, Theorem 51.{\bf -iii)},  we will need to know how the connecting homomorphism, formula (86), behaves with respect to stabilisations. 
For this purpose we consider a more general situation than that considered in the \S 10.1.
\par
Let  $U \in \mathbb{GL}_{m+n} (\Lambda')$ and 
\begin{equation}   
e_{(0,n)} =
\begin{pmatrix}
0 & 0 \\
0 & 1_{n}
\end{pmatrix} 
\in \mathbb{M}_{m+n} (\Lambda')
\end{equation}    
be such that the diagram
\begin{equation}  
\begin{CD}
\Lambda^{'m+n}  @  >  {  U }   >>   \Lambda^{'m+n}  \\
@VV  { e_{(0,n)} }  V   @    VV { e_{(0,n)} }  V \\
\Lambda^{'m+n}  @  > { U  } >>   \Lambda^{'m+n} 
\end{CD}
\end{equation}   
is commutative. This condition is used at the end of the construction of $\partial [U]$; it is needed to
show that $\mathbb{P}_{U} \in \mathbb{I}demp_{m+n}(\Lambda)$, formula (97).   
\par
The case discussed in \S 10.1  corresponds  in this subsection to $m = 0$ .  
\par
We proceed as before.
Let $A, \; B \in  \mathbb{GL}_{m+n} (\Lambda_{1, \mu})$  be liftings of $U$, resp. $U^{-1}$, in 
$\mathbb{M}_{m+n, \mu} (\Lambda_{1})$. Such liftings exist because we assume $j_{1}$ is surjective.
\par
With $A$ and $B$ one associates $S_{0}= 1 -  BA \in \mathbb{GL}_{m+n} (\Lambda_{1, \mu})$ and 
$S_{1}= 1 -  AB \in \mathbb{GL}_{m+n} (\Lambda_{1, \mu})$.
The matrices $S_{0}$, $S_{1}$ satisfy 
\begin{equation}   
j_{1, \ast } (S_{0}) = j_{1, \ast } (S_{1}) = 0.
\end{equation}  
With these matrices one associates the invertible matrix    
(ref. \cite{Connes_Moscovici})
\begin{gather}  
L =
\begin{pmatrix}
S_{0}  &  - (1 + S_{0}) B  \\
A  &  S_{1}
\end{pmatrix}
\in \mathbb{GL}_{2(m+n)}(\Lambda_{1, \mu - 2});
\end{gather}   
the inverse of the matrix $L$ is 
\begin{gather}  
L^{-1} =
\begin{pmatrix}
S_{0}  &   (1 + S_{0}) B  \\
- A  &  S_{1}
\end{pmatrix}
\in \mathbb{GL}_{2(m+n)}(\Lambda_{1, \mu - 2}).
\end{gather}  
Let $e_{1} $ be the idempotent
\begin{gather}  
e_{1} = 
\begin{pmatrix}
e_{(0,n)} & 0\\
0 & 0
\end{pmatrix} \;\; \in \mathbb{GL}_{2(m+n)}(\Lambda_{1, \mu }).
\end{gather}   
and

\begin{gather}  
e_{2} = 
\begin{pmatrix}
0 & 0\\
0 & e_{(0,n)}
\end{pmatrix} \;\; \in \mathbb{GL}_{2(m+n)}(\Lambda_{1, \mu }).
\end{gather}   

The invertible matrix $L$ is used to produce the idempotent

\begin{gather}  
P  :=  L e_{1} L^{-1} =
\begin{pmatrix}
S_{0} \;e_{(0,n)}  \; S_{0} &   S_{0} \; e_{(0,n)}  \; (1 + S_{0}) B  \\
A \;e_{(0,n)}  \;S_{0} &  A \;e_{(0,n)}  \; (1 + S_{0}) B
\end{pmatrix}
\in \mathit{Idemp}_{2(m+n)}(\Lambda_{1, \mu - 2}).
\end{gather}   

The idempotent $P$ is used to construct the double-matrix idempotent

\begin{gather}   
\mathbb{P}_{U}  :=
\begin{pmatrix}
(S_{0}  \;e_{(0,n)}  \; S_{0}, \; 0)  &   (S_{0} \; e_{(0,n)}  \;(1 + S_{0}) B, \; 0)  \\
(A \; e_{(0,n)}  \;S_{0}, \; 0)   & (A \; e_{(0,n)}  \;  (1 - S_{0}) B, \; e_{(0,n)} )
\end{pmatrix}
\in \mathbb{I}demp_{2(m+n)}(\Lambda_{1, \mu - 2} \oplus    \Lambda_{2, \mu - 2}).
\end{gather}    
The matrix   $\mathbb{P}_{U}$  is an idempotent in $\mathbb{M}_{2(m+n)}(\Lambda_{\mu - 2})$.
We may verify directly that  $(j_{1, \ast} - j_{2, \ast}) \mathbb{P}_{U} = 0$. Indeed, $j_{1 \ast}$ is a ring homomorphism
and  $j_{1 \ast} (S_{0}) = j_{1 \ast} (S_{0}) = 0$; finally,  the hypothesis (89) gives
\begin{equation} 
j_{1}(A \; e_{(0,n)}  \;  (1 - S_{0}) B) = U e_{(0,n)} U^{-1} = e_{(0,n)} = j_{2} (e_{(0,n)} ).
\end{equation}   
 Therefore
$\mathbb{P}_{U} \in \mathbb{M}_{2n}(\Lambda_{\mu - 2})$.
The fact that the matrix $\mathbb{P}_{U}$ is an idempotent in $\mathbb{M}_{2(m+n)}(\Lambda_{\mu -2})$ follows from the fact that $\mathit{P} $ and $e_{2}$  are idempotents along with the discussion above. 
\begin{definition}    {\em Connecting homomorphism - third form.} 
\par
We suppose the assumptions and constructions of \S 10.2 above are in place. 
For any $[U] \in K_{1}^{loc} (\Lambda^{'})$ one defines the {\em connecting homomorphism} 
$\partial : K_{1}^{loc}(\Lambda') \longrightarrow K_{0}^{loc}(\Lambda)$ by
\begin{gather}   
\partial [U] := [ \mathbb{P}_{U}]  - [(e_{2}, e_{2})] = 
\begin{pmatrix}
(0 , \; 0)  &   (0, \; 0)  \\
(0, \; 0)   & (A e_{(0,n)}  B, \; e_{(0,n)}  )
\end{pmatrix} 
 - [(e_{2}, e_{2})]
\in  K_{0}^{loc} (\Lambda).
\end{gather}  
\end{definition}   
\begin{remark}  
If the lifts $A$, resp. $B$, of the isomorphisms $U$, resp. $U^{-1}$, in $\Lambda_{1}^{m+n}$ are inverse one to each other,  then the corresponding matrices $S_{0} = S_{1} = 0$ and 
\begin{equation}   
\partial [U] := [ \mathbb{P}_{U}]  - [(e_{2}, e_{2})] \in  K_{0}^{loc} (\Lambda).
\end{equation}   
\end{remark}   

\begin{proposition} 
The connecting homomorphism $\partial $  is well defined.
\end{proposition}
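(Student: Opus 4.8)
The plan is to check that the class $\partial[U]=[\mathbb{P}_U]-[(e_2,e_2)]\in K_0^{loc}(\Lambda)$ from the definition of the connecting homomorphism in \S 10.2 is independent of every choice made in constructing it. This breaks into four independences --- (i) of the lifts $A,B\in\mathbb{M}_{m+n}(\Lambda_{1,\mu})$ of $U$ and $U^{-1}$; (ii) of the stabilisation, i.e.\ the passage $U\rightsquigarrow U\oplus 1_k$; (iii) of the replacement of $U$ by a locally conjugate matrix; (iv) of the replacement of $U$ by a matrix equivalent to it modulo $\mathbb{O}(\Lambda'_\mu)$ --- together with (v) compatibility of the construction with the two projective limits defining $K_1^{loc}(\Lambda')$ and $K_0^{loc}(\Lambda)$. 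By Hypothesis~3 we may assume $j_1$ is surjective, the $j_2$ case being symmetric. I would also record that $\partial$ is additive under $\oplus$ (needed for (iv), and also the homomorphism property), and observe that by inspection of the formula for $\mathbb{P}_U$ its $\Lambda_2$-component is always the scalar idempotent $e_2$, independently of $U$, $A$, $B$.

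\emph{Independence of the lifts.} Since $S_0=1-BA$, $S_1=1-AB$ and $j_{1,\ast}B\cdot j_{1,\ast}A=U^{-1}U=1$, we have $j_{1,\ast}S_0=j_{1,\ast}S_1=0$; hence the matrix $L$ occurring in the construction of $\partial$ has $j_{1,\ast}L$ equal to the invertible matrix over $\Lambda'$ with block entries $0,-U^{-1},U,0$, which is the same for any choice of lifts. Thus for a second choice $A',B'$ the matrix $W:=L'L^{-1}\in\mathbb{GL}(\Lambda_{1,\mu'})$ satisfies $j_{1,\ast}W=1$, so $(W,1)$ is an invertible double matrix over $\Lambda_{\mu'}$; it conjugates the $\Lambda_1$-component $P=Le_1L^{-1}$ of $\mathbb{P}_U$ to $P'=L'e_1L'^{-1}$ and fixes the common $\Lambda_2$-component $e_2$, so $\mathbb{P}_U\sim_l\mathbb{P}_U'$ over $\Lambda_{\mu'}$, whence $[\mathbb{P}_U]=[\mathbb{P}_U']$ in $K_0(\Lambda_{\mu'})$ and a fortiori in $K_0^{loc}(\Lambda)$. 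The filtration orders spent in forming $W$ are absorbed by the projective limit, and the same bookkeeping shows that $\partial$ commutes with the transition maps $K_1(\Lambda'_\mu)\to K_1(\Lambda'_{\mu+2})$ and $K_0(\Lambda_{\mu-2})\to K_0(\Lambda_\mu)$, which is (v).

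\emph{Stabilisation and the $\mathbb{O}$-relation.} For (ii): replacing $U$ by $U\oplus 1_k$ (and $e_{(0,n)}$ by $e_{(0,n)}\oplus 0_k$) leaves $S_0,S_1$ unchanged on the old block and makes them vanish on the new one, so $\mathbb{P}_{U\oplus 1_k}=\mathbb{P}_U\oplus 0_{2k}$ and $(e_2,e_2)_{U\oplus 1_k}=(e_2,e_2)_U\oplus 0_{2k}$ up to a permutation conjugation; the difference defining $\partial$ is therefore unchanged. This is precisely the situation the third form of $\partial$ in \S 10.2 was set up to handle. For (iv): if $\xi\in\mathbb{O}_{2\ell}(\Lambda'_\mu)$ then by the factorisation of $\mathcal{O}_{2\ell}(\cdot)$ into elementary matrices (formula (48)) it is a product of elementary matrices and the flip matrix, each of which lifts to an invertible matrix over $\Lambda_{1,\mu}$ since $j_1$ is surjective (the elementary factors via $E_{ij}(a)\mapsto E_{ij}(\tilde a)$, the flip via itself); taking $A$, $B$ to be such a lift and its inverse gives $S_0=S_1=0$, whence $\mathbb{P}_\xi=(e_2,e_2)$ and $\partial[\xi]=0$. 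Combining this with additivity of $\partial$ under $\oplus$ (immediate from the block form of $\mathbb{P}_U$, the reordering being a conjugation by a permutation double matrix, cf.\ the commutativity identity (9)) and with the fact that $u_1\sim_{\mathbb{O}_\mu}u_2$ means $u_1\oplus\xi_1=u_2\oplus\xi_2$ for some $\xi_i\in\mathbb{O}(\Lambda'_\mu)$, we obtain $\partial[u_1]=\partial[u_2]$. Note that here $\mathbb{O}$-elements are lifted over $\Lambda_1$ only, so no factor $\mathbb{Z}[\tfrac{1}{2}]$ is required for $\partial$; the $\mathbb{Z}[\tfrac{1}{2}]$ of Theorem~36 is the price of the opposite (simultaneous) lifting problem.

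\emph{Local-conjugation invariance --- the main obstacle.} Let $U'=VUV^{-1}$ with $V\in\mathbb{GL}_{m+n}(\Lambda'_\mu)$. The difficulty is that $V$ lives over the quotient $\Lambda'$ and in general has no invertible lift over $\Lambda_1$, so the construction cannot simply be conjugated by a lift of $V$. My plan: use (ii) to pass to $U\oplus 1$ and $U'\oplus 1$; use the standard identity $VUV^{-1}\oplus 1=\mathcal{O}(V)\,(U\oplus 1)\,\mathcal{O}(V)^{-1}$ (recorded in \S 9) over $\Lambda'$, where now $\mathcal{O}(V)=V\oplus V^{-1}$ \emph{does} lift to an invertible matrix $\Omega$ over $\Lambda_{1,\mu}$ by (48); then, invoking the already-proved independence of lifts, compute $\partial[U'\oplus 1]$ using the lifts $\Omega(A\oplus 1)\Omega^{-1}$ and $\Omega(B\oplus 1)\Omega^{-1}$, which yields $L'=\hat\Omega L\hat\Omega^{-1}$ with $\hat\Omega=\Omega\oplus\Omega$ and $P'=\hat\Omega L\hat\Omega^{-1}e_1\hat\Omega L^{-1}\hat\Omega^{-1}$. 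The delicate point is that $\Omega$ commutes with $e_{(0,n)}$ only after applying $j_{1,\ast}$ --- indeed $j_{1,\ast}\Omega=\mathcal{O}(V)$ is block diagonal, hence commutes with $e_{(0,n)}$, but $\Omega$ itself need not, because the elementary factors of (48) mix the two $(m+n)$-blocks --- so $\hat\Omega$ need not commute with $e_1$. One absorbs this by noting that $\hat\Omega^{-1}e_1\hat\Omega$ is an idempotent over $\Lambda_{1,\mu}$ with the same $j_{1,\ast}$-image $e_1$, that consequently $\mathbb{P}_{U'\oplus 1}$ and $\mathbb{P}_{U\oplus 1}$ have first components conjugate over $\Lambda_1$ and identical ($=e_2$) second components, and that the resulting equality $[\mathbb{P}_{U'\oplus 1}]=[\mathbb{P}_{U\oplus 1}]$ in $K_0^{loc}(\Lambda)$ follows after one further stabilisation, using the Grothendieck slack built into $K_0^{loc}$ and, if necessary, one more application of the $\mathcal{O}$-device to the conjugating matrix so that it too acquires an invertible double-matrix lift. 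Carrying out this last absorption cleanly is what I expect to be the hard part; the remainder is routine double-matrix bookkeeping and filtration accounting. Assembling (i)--(v) and the homomorphism property then shows that $\partial$ is a well-defined homomorphism $K_1^{loc}(\Lambda')\to K_0^{loc}(\Lambda)$.
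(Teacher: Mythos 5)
Your decomposition of well-definedness into independence of lifts, stabilisation invariance, local-conjugation invariance, $\mathbb{O}$-invariance, and projective-limit compatibility is more complete than the paper's own proof, which only addresses three of these: it states stabilisation/sums compatibility without detail, shows $\partial\xi=0$ for $\xi\in\mathbb{O}(\Lambda'_{\mu})$ via the decomposition (48) exactly as you do, and checks lift-independence --- but it never verifies invariance under local conjugation $U\rightsquigarrow VUV^{-1}$ with $V\in\mathbb{GL}(\Lambda'_{\mu})$, which is a genuine omission since $K_1^{loc}$ is defined as a quotient by $\sim_{sl}$ (which includes $\sim_l$) before the $\mathbb{O}$-quotient. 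On the point where you and the paper overlap, your treatment of lift-independence is cleaner: instead of the paper's explicit block computation of $\tilde{L}L^{-1}$ for $A'=A+K$ and $B'=B+H$ (formulas (100)--(107)), you observe directly that any two $L,L'$ have the same $j_{1,\ast}$-image $\begin{pmatrix}0 & -U^{-1}\\ U & 0\end{pmatrix}$, so $W=L'L^{-1}$ satisfies $j_{1,\ast}W=1$ and $(W,1)$ is an invertible double matrix conjugating $\mathbb{P}_U$ to $\mathbb{P}'_U$; this is exactly the content of the paper's Remark~50, which the paper records after the proof but does not use inside it. Both approaches are valid; yours avoids the computation and exposes the structural reason.

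On the local-conjugation step --- which you correctly flag as the main obstacle --- your plan (stabilise, rewrite $VUV^{-1}\oplus 1=\mathcal{O}(V)(U\oplus 1)\mathcal{O}(V)^{-1}$, lift $\mathcal{O}(V)$ over $\Lambda_1$ via (48), then conjugate lifts by $\Omega$) is the right strategy, but as you yourself note it is left unfinished. The difficulty you isolate is real: $\Omega$ lifts $\mathcal{O}(V)=V\oplus V^{-1}$ but need not commute with $e_{(0,n)}\oplus 0$, because the elementary factors in (48) mix blocks, so $\hat{\Omega}e_1\hat{\Omega}^{-1}\neq e_1$ in general; the claim that the resulting deviation can be absorbed ``by one further stabilisation and the Grothendieck slack'' needs to be made precise, and the size bookkeeping (when $V$ is $(m+n)\times(m+n)$, $\Omega$ is $2(m+n)\times 2(m+n)$, $\hat{\Omega}$ is $4(m+n)\times 4(m+n)$, and you must carefully specify which $e_{(0,\cdot)}$ satisfies the commuting hypothesis (89) for $U\oplus 1$) must be carried out. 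Since the paper provides no argument at all for this piece, you are not missing something the paper supplies --- but to call the proof complete you would have to finish this absorption, most likely by further enlarging the stabilisation so that the conjugating element itself becomes an invertible double matrix over $\Lambda_{\mu}$ rather than merely over $\Lambda_{1,\mu}$.
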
  

\begin{proof}     
It is easy to see that $\mathbb{P}_{U}$ behaves correctly with respect to stabilisation and sums.
\par
Given that any $\partial \xi$, with $\xi \in \mathbb{O}_{\mu}(\Lambda^{'}{\mu})$, represents the zero element in $K_{1}(\mathit{A}_{\mu})$, see Proposition 22. -ii), 
we have to show that $\partial \xi = 0$. To do this we take into account the 
Definition  46 together with formula (48).  In fact, this last formula shows that the invertible element $\xi$ has an invertible lifting in 
$\mathbb{GL}(\Lambda_{2, \mu})$. This lifted element together with its inverse produce a pair of elements $A$, $B$, see   Definition
45, for which the corresponding elements $S_{0} = S_{1}  = 0.$ Formulas (86), (87) show that $\partial \xi= 0.$ 
\par
It remains to follow up how $\mathbb{P}_{U}$ depends of the choice of the lifts $A$ and $B$ of $U$ and $U^{-1}$. 
We observe that a different choice of $A$ and $B$ has the effect of modifying just the matrix $L$. We will show that if
$A'$ and $B'$ are two such different lifts and $L'$ is the corresponding matrix, then
\begin{equation}    
L' = \tilde{L} \;  L,     \hspace{0.5cm}  with   \hspace{0.2cm}  \tilde{L} \in \mathbb{GL}_{2(m+n)} (\Lambda)
\end{equation}   
and hence the corresponding idempotents $\mathbb{P}_{U} := L e_{1} L^{-1}$,
$\mathbb{P}'_{U} := L' e_{1} L'^{-1}$ are conjugate.
 \par
 To better organise the computation, we change the liftings one at the time. 
 \par
 We begin with $A$. Let $\tilde{A} = A + K$ with $j_{1}(K) = 0$. Let $\tilde{S}_{0} = 1 - B \tilde{A}$, 
 $\tilde{S}_{1} = 1 -  \tilde{A} B$,  $\tilde{L}$,  $\tilde{P}$ and $\tilde{\mathbb{P}}_{U}$ be the corresponding elements.
 A direct computation gives
  
 \begin{gather}    
 \tilde{L} L^{-1} = 
 \begin{pmatrix}
1 - B K &  - B K B  \\
     K   &  1 +  K B
\end{pmatrix}   
\end{gather}     
or

\begin{gather}   
 \tilde{L}  = 
 \begin{pmatrix}
1 - B K &  - B K B  \\
     K   &  1 +  K B
\end{pmatrix}    
 L.
\end{gather}

We know that $\tilde{L}$ and $L$ are invertible matrices; therefore the RHS of (101) is an invertible matrix.
\par
The corresponding idempotent $\tilde{P}$ is
\begin{gather}   
\tilde{P} = \tilde{L}  . e_{1} . \tilde{L}^{-1} =    
\begin{pmatrix}
1 - B K &  - B K B  \\
     K   &  1 +  K B
\end{pmatrix}
 L. e_{1} . L^{-1}
\begin{pmatrix}
1 - B K &  - B K B  \\
     K   &  1 +  K B
\end{pmatrix} 
^{-1} = \\        
= \begin{pmatrix}   
1 - B K &  - B K B  \\
     K   &  1 +  K B
\end{pmatrix}
 P
\begin{pmatrix}
1 - B K &  - B K B  \\
     K   &  1 +  K B
\end{pmatrix} 
^{-1}
\end{gather}   
and furthermore
\begin{gather}   
\tilde{\mathbb{P}}_{U} =
(
\begin{pmatrix}
1 - B K &  - B K B  \\
     K   &  1 +  K B
\end{pmatrix}, \; 1) \;
 \mathbb{P}_{U} \;
( \begin{pmatrix}
1 - B K &  - B K B  \\
     K   &  1 +  K B
\end{pmatrix}, \; 1)
^{-1}.
\end{gather}   
Therefore, $[\tilde{\mathbb{P}}_{U}] = [\mathbb{P}_{U}] \in K_{0}^{loc}(\Lambda)$.
\par
It remains to see what happens if $A$ remains unchanged and the lifting B is changed. Let $\tilde{B} = B + H$, with
$j_{1} (H) = 0$.  Let $\tilde{\tilde{L}}$,  $\tilde{\tilde{P}}$ and  $\tilde{\tilde{\mathbb{P}}}_{U}$ be the corresponding
matrices.  A direct computation gives 

\begin{gather}   
\tilde{\tilde{L}} . L^{-1} =    
\begin{pmatrix}  
1 + \Delta_{11}  &  \Delta_{12}  \\
\Delta_{21}  &  1 + \Delta_{22}
\end{pmatrix}
  \in \mathbb{M}_{2n} ( \Lambda_{\mu -4}) 
\end{gather}    
where 
\begin{align*}
\Delta_{11}  =  &  HA - HAHA - BAHA     \\
\Delta_{12}  =  & -2H + HAB + HAH + BAH - BAHAB - HAHAB  \\
\Delta_{21}   =  & AHA    \\
\Delta_{22}  =   & - AH + AHAB.
\end{align*} 

The RHS of (106) is a product of invertible matrices; therefore, it is an invertible matrix. Proceeding as above we get
\begin{gather}   
\tilde{\tilde{\mathbb{P}}}_{U} =  
( \begin{pmatrix}  
1 + \Delta_{11}  &  \Delta_{12}  \\
\Delta_{21}  &  1 + \Delta_{22}
\end{pmatrix}
, \; 1)
\; \mathbb{P}_{U} \; 
( \begin{pmatrix}  
1 + \Delta_{11}  &  \Delta_{12}  \\
\Delta_{21}  &  1 + \Delta_{22}
\end{pmatrix},
\; 1)
^{-1},
\end{gather}  
which completes the proof of the Proposition 49.  
\end{proof}   
 
\begin{remark}   
The connecting homomorphism $\partial$ could be defined by the same formula (95), (96), (98) where 
$L \in \mathbb{GL}_{2n}(\Lambda_{1, \mu})$ is {\em any} invertible lifting of the matrix
\begin{gather}   
\begin{pmatrix} 
0 & -U^{-1}\\
U & 0
\end{pmatrix}.
\end{gather}   
If $L$, $L'$ are two such liftings, the corresponding idempotents $\mathbb{P}_{U}$ are conjugate
\begin{equation}  
\mathbb{P}'_{U}    =    (L'  L^{-1}) \mathbb{P}_{U}    (L'  L^{-1})^{-1}.
\end{equation}  
\end{remark}  

\section{Six terms exact sequence.}     
\begin{theorem} 
The six terms sequence 
\begin{gather}   
K_{1}^{loc} (\Lambda) \otimes \mathbb{Z}[\frac{1}{2}] \longrightarrow
K_{1}^{loc} (\Lambda_{1})  \otimes \mathbb{Z}[\frac{1}{2}] \oplus K_{1}^{loc} (\Lambda_{2}) \otimes \mathbb{Z}[\frac{1}{2}]   \overset{ j_{1 \ast} - j_{2 \ast} }{\longrightarrow} 
K_{1}^{loc} (\Lambda^{'})  \otimes \mathbb{Z}[\frac{1}{2}]  \overset{\partial}{\longrightarrow} \\
K_{0}^{loc} (\Lambda) \otimes \mathbb{Z}[\frac{1}{2}]   \overset{ (i_{1 \ast}, i_{2 \ast}) }{\longrightarrow} 
K_{0}^{loc} (\Lambda_{1}) \otimes \mathbb{Z}[\frac{1}{2}]   \oplus K_{0}^{loc} (\Lambda_{2}) \otimes \mathbb{Z}[\frac{1}{2}] 
\longrightarrow  K_{0}^{loc}(\Lambda^{'}) \otimes \mathbb{Z}[\frac{1}{2}] .
\end{gather}  
is exact. 
\par
More precisely, the following sequences are exact
\par
{\bf -i)}   
\begin{gather}   
K_{0}^{loc} (\Lambda)   \overset{ (i_{1 \ast}, i_{2 \ast}) }{\longrightarrow} 
K_{0}^{loc} (\Lambda_{1})  \oplus K_{0}^{loc} (\Lambda_{2}) 
\overset{ j_{1 \ast} - j_{2 \ast} }{\longrightarrow}   K_{0}^{loc}(\Lambda^{'})
\end{gather}  
\par
{\bf -ii)}
\begin{gather}    
K_{1}^{loc} (\Lambda) \otimes \mathbb{Z}[\frac{1}{2}]  \overset{(i_{1 \ast}, i_{2 \ast})}{\longrightarrow}
K_{1}^{loc} (\Lambda_{1}) \otimes \mathbb{Z}[\frac{1}{2}]   \oplus K_{1}^{loc} (\Lambda_{2}) \otimes \mathbb{Z}[\frac{1}{2}]   \overset{ j_{1 \ast} - j_{2 \ast} }{\longrightarrow} 
K_{1}^{loc} (\Lambda^{'})  \otimes \mathbb{Z}[\frac{1}{2}]  
\end{gather}   
\par
{\bf -iii)}
\begin{gather}  
K_{1}^{loc} (\Lambda_{1})  \oplus K_{1}^{loc} (\Lambda_{2})  \overset{ j_{1 \ast} - j_{2 \ast} }{\longrightarrow} 
K_{1}^{loc} (\Lambda')   \overset{\partial}{\longrightarrow} 
K_{0}^{loc} (\Lambda)   \overset{ (i_{1 \ast}, i_{2 \ast}) }{\longrightarrow} 
K_{0}^{loc} (\Lambda_{1})  \oplus K_{0}^{loc} (\Lambda_{2}). 
\end{gather} 
\end{theorem}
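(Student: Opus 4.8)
The plan is to prove each of the three stated exact sequences by the Mayer--Vietoris bookkeeping of Milnor, carried out first at a fixed filtration level $\mu$ for the groups $K_{i}(\cdot_{\mu})$ (and, in -ii), for $K_{i}(\cdot_{\mu})\otimes\mathbb{Z}[\tfrac12]$), and then passing to the projective limit $\projlim_{\mu}$. Since $\projlim_{\mu}$ is left exact, exactness at the middle term of each three term segment survives the limit, the $\pm 2$ shifts of the filtration index built into Definitions 14 and 19 and into the connecting homomorphism (Definitions 44--48) being exactly what renders the maps compatible with the limit and makes $\sim_{l}$, $\sim_{sl}$ transitive in the codomains. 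At each vertex one checks (a) that the composite of two consecutive arrows is zero and (b) that the kernel of an arrow lies in the image of the preceding one.

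For -i) and -ii) the vanishing of $(j_{1\ast}-j_{2\ast})\circ(i_{1\ast},i_{2\ast})$ is immediate from the commutativity of the square (6), i.e.\ $j_{1\ast}i_{1\ast}=j_{2\ast}i_{2\ast}$. The nontrivial inclusion $\ker(j_{1\ast}-j_{2\ast})\subseteq\operatorname{im}(i_{1\ast},i_{2\ast})$ is, at level $\mu$, precisely Theorem 34 for -i) and precisely Theorem 36 for -ii); applying $\projlim_{\mu}$ then yields -i) over $\mathbb{Z}$ and -ii) over $\mathbb{Z}[\tfrac12]$, the factor $\tfrac12$ entering only through the hypothesis of Theorem 36, that is through Lemma 37 and Remark 38, where an element of $\mathbb{O}(\Lambda'_{\mu})$ lifts to $\mathbb{O}(\Lambda_{i,\mu})$ only after being multiplied by $\tfrac12$.

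The substance is -iii), which requires four checks, again performed at level $\mu$. First, $(i_{1\ast},i_{2\ast})\circ\partial=0$ because, by Definition 44, $\partial[U]=[p(1_{n},1_{n},U)]-[(e_{2},e_{2})]$ has $\Lambda_{1}$-component $1_{n}\oplus 0_{n}$ and $\Lambda_{2}$-component $\tilde{p}_{2}\sim_{l}1_{n}\oplus 0_{n}$, so $i_{1\ast}\partial[U]=i_{2\ast}\partial[U]=[1_{n}]-[1_{n}]=0$. Secondly, $\partial\circ(j_{1\ast}-j_{2\ast})=0$ by additivity of $\partial$ (Proposition 49): for $U=j_{1\ast}(u_{1})$ take lifts $A=u_{1}$, $B=u_{1}^{-1}$, so $S_{0}=S_{1}=0$ and $\mathbb{P}_{U}=(e_{2},e_{2})$; for $U=j_{2\ast}(u_{2})$, any lifts $A,B$ over $\Lambda_{1}$ of $U,U^{-1}$ assemble with the invertible matrix $L_{2}\in\mathbb{GL}(\Lambda_{2,\mu})$ with block entries $0,-u_{2}^{-1},u_{2},0$ into an invertible double matrix $\mathbb{L}=(L,L_{2})$ (the gluing $j_{1\ast}L=j_{2\ast}L_{2}$ holding because $j_{1}(S_{0})=j_{1}(S_{1})=0$), whence $\mathbb{P}_{U}=\mathbb{L}\,(e_{1},e_{1})\,\mathbb{L}^{-1}\sim_{l}(e_{2},e_{2})$; either way $\partial[U]=0$. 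Thirdly, for $\ker(i_{1\ast},i_{2\ast})\subseteq\operatorname{im}\partial$ one normalises a class of $K_{0}^{loc}(\Lambda)$ to $[p]-[1_{N}]$ (as in the proof of Theorem 34) with $i_{1\ast}[p]=i_{2\ast}[p]=[1_{N}]$, uses Lemma 33 to conjugate, after stabilisation, the two components of $p$ to one standard idempotent $\mathbf{e}$ by invertibles $g_{1}\in\mathbb{GL}(\Lambda_{1,\mu})$, $g_{2}\in\mathbb{GL}(\Lambda_{2,\mu})$, and takes $U$ to be the block on $\operatorname{im}\mathbf{e}$ of $j_{2\ast}(g_{2})\,j_{1\ast}(g_{1})^{-1}\in\mathbb{GL}(\Lambda'_{\mu})$, which commutes with $\mathbf{e}$; a direct check against Definition 44 and Remark 50 gives $\partial[U]=[p]-[1_{N}]$. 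Finally, for $\ker\partial\subseteq\operatorname{im}(j_{1\ast}-j_{2\ast})$: if $\partial[U]=0$, i.e.\ $[p(1_{n},1_{n},U)]=[(e_{2},e_{2})]$, then by Lemma 32 there is, after stabilisation by identity blocks, a double matrix $\mathbb{V}=(V_{1},V_{2})$ with $\mathbb{V}\,p(1_{n},1_{n},U)\,\mathbb{V}^{-1}=(e_{2},e_{2})$; presenting $\partial$ through an invertible lift $L\in\mathbb{GL}(\Lambda_{1,\mu})$ of the matrix with block entries $0,-U^{-1},U,0$ (Remark 50) and comparing $V_{1}L$ with the scalar permutation carrying $e_{1}$ to $e_{2}$ one solves for $U$ in the form $U=j_{1\ast}(g_{1})^{-1}\,j_{2\ast}(g_{2})$ with $g_{i}\in\mathbb{GL}(\Lambda_{i,\mu})$, and must then rewrite $[U]$ as $j_{1\ast}[a]-j_{2\ast}[b]$ inside $K_{1}^{loc}(\Lambda')$. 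Passing to $\projlim_{\mu}$ finishes -iii).

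The hard part is this last inclusion $\ker\partial\subseteq\operatorname{im}(j_{1\ast}-j_{2\ast})$. Classically one converts the product presentation $U=j_{1\ast}(g_{1})^{-1}j_{2\ast}(g_{2})$ into a direct-sum presentation $j_{1\ast}(a)\oplus j_{2\ast}(b)$ by the Whitehead identities (Theorem 40, formula (32)), which cost an unbounded number of multiplications and hold only modulo the commutator subgroup; in $K_{1}^{loc}$ these identities hold only modulo $\mathbb{O}$, and $\sim_{\mathbb{O}}$ is an \emph{additive} relation, so passing from a product to a direct sum without disturbing the class is exactly the delicate point --- the same lifting-of-$\mathbb{O}$-type-elements phenomenon that forces $\otimes\mathbb{Z}[\tfrac12]$ in -ii) --- and it is here that one must verify, carefully, whether the integral statement -iii) genuinely goes through. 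A secondary, purely formal obstacle is the $\projlim_{\mu}$ bookkeeping: every identification of a kernel with an image must be made at a fixed $\mu$, with the built-in $\pm 2$ shifts absorbing the extra multiplications, before the limit is taken, so that left exactness of $\projlim_{\mu}$ can do the rest.
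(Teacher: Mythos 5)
Your overall strategy matches the paper's: the vanishing of consecutive composites is either trivial from commutativity of diagram (6) or verified by choosing lifts $A,B$ with $S_{0}=S_{1}=0$; middle-term exactness of -i) is Theorem 34, of -ii) is Theorem 36 (and this is precisely where $\mathbb{Z}[\frac{1}{2}]$ enters, via Lemma 38 and Remark 48); and -iii) splits into $\ker(i_{1\ast},i_{2\ast})\subseteq\operatorname{im}\partial$ (your step using Lemma 33, the conjugators $g_{1},g_{2}$, and $U$ built from $j_{1\ast}(g_{1})$, $j_{2\ast}(g_{2})$, which is the paper's part -iii.4) and $\ker\partial\subseteq\operatorname{im}(j_{1\ast}-j_{2\ast})$ (the paper's -iii.3). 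Your treatment of the $j_{2\ast}$-side of the identity $\partial\circ(j_{1\ast}-j_{2\ast})=0$, gluing $L$ with the invertible $L_{2}\in\mathbb{GL}(\Lambda_{2,\mu})$, is a useful addition: the paper's -iii.1 only carries out the $j_{1\ast}$ case explicitly.

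You stop short exactly where you should be suspicious, but the specific difficulty you flag — converting the product $U=j_{1\ast}(g_{1})^{-1}j_{2\ast}(g_{2})$ into a difference of $K_{1}^{loc}$ classes — is in fact provided for by the paper: Proposition 26 together with identity (31) give $[A+B]=[AB]$ in $K_{1}^{loc}$, since $(A\oplus B)=(AB\oplus 1)(B^{-1}\oplus B)$ and the second factor lies in $\mathbb{O}$; the single extra multiplication this costs is a fixed filtration shift absorbed in the projective limit. You should cite Proposition 26 at that point rather than leave the step open.

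However, your instinct that the integral statement of -iii) is in doubt is well founded, only for a different reason. The paper's own proof of -iii.1 — that $\partial$ vanishes on the subsemigroup $\mathbb{O}(\Lambda'_{\mu})$, which is needed for $\partial$ to descend to $K_{1}^{loc}(\Lambda')$ and for $\partial\circ(j_{1\ast}-j_{2\ast})=0$ to make sense on classes — invokes Lemma 38 and Remark 48 and concludes only that $\partial u=0$ in $K_{0}^{loc}(\Lambda)\otimes\mathbb{Z}[\frac{1}{2}]$. In other words the proof quietly re-introduces the $\mathbb{Z}[\frac{1}{2}]$ that the statement of -iii) omits. To complete a proof of -iii) you must therefore either supply an integral argument that $\partial$ kills $\mathbb{O}(\Lambda'_{\mu})$ (not merely after inverting $2$) — Remark 48 gives such an argument when the element $\xi\in\mathbb{O}(\Lambda'_{\mu})$ admits an \emph{invertible} lift to $\Lambda_{1,\mu}$, which formula (48) does provide, so this may be salvageable — or else state -iii), like -ii), over $\mathbb{Z}[\frac{1}{2}]$. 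As written, neither your proposal nor the paper's printed proof unambiguously establishes the integral form of -iii).
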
 
\begin{proof}   
\par

{\bf -i)}  It is easy to verify that
$Im (i_{1 \ast}, i_{2 \ast}) \subset Ker (j_{1 \ast} - j_{2 \ast})$.
\par
We verify that $Ker (j_{1 \ast} - j_{1 \ast}) \subset  Im  (i_{1 \ast}, i_{2 \ast})$. 
\par
Let $[p_{1}] - [p_{2}] \; , \;  [q_{1}] - [q_{2}] \in K_{0}^{loc} (\Lambda_{1, \mu}) \oplus K_{0}^{loc} (\Lambda_{1, \mu}) $
be such that 
$$
0 = j_{1, \ast}([p_{1}] - [p_{2}]) - j_{2, \ast} ([q_{1}] - [q_{2}]), 
$$
where $p_{1}, p_{2}, q_{1}, q_{2}$ are idempotents. The pair of $K$-theory classes may be re-written
$$
0 = j_{1, \ast}([p_{1} + \bar{p}_{2}] - [p_{2}+ \bar{p}_{2})] - j_{2, \ast} ([q_{1}+ \bar{q}_{2}] - [q_{2}+ \bar{q}_{2}]) 
$$
or
$$
0 = j_{1, \ast}([p_{1} + \bar{p}_{2}] - [1_{r}] - j_{2, \ast} ([q_{1}+ \bar{q}_{2}] - [1_{s}]). 
$$
By adding all sides a trivial idempotent of sufficiently large size, we may assume that $r = s$ is large. This relation may be re-written
$$
0 = j_{1, \ast}([p_{1} + \bar{p}_{2}] - j_{2, \ast} ([q_{1}+ \bar{q}_{2}] ). 
$$
This means that there exists an idempotent $\xi \in \mathbb{I}demp_{N}(\Lambda^{'}_{\mu})$  such that the idempotents
$$
( j_{1, \ast}([p_{1} + \bar{p}_{2}] + \xi) - ( j_{2, \ast} ([q_{1}+ \bar{q}_{2}] + \xi) 
$$
are isomorphic. We add further the idempotent $\bar{\xi} := 1_{N} - \xi$ to get isomorphic idempotents

$$
( j_{1, \ast}([p_{1} + \bar{p}_{2} ] + \xi + \bar{\xi}),   \;\;  ( j_{2, \ast} ([q_{1}+ \bar{q}_{2}] + \xi + \bar{\xi}) \in 
\mathbb{I}demp_{\bar{N}} (\Lambda^{'}_{\mu})
$$
( $\bar{N}$ being the size of these idempotents)
or
$$
( j_{1, \ast}([p_{1} + \bar{p}_{2} ] + 1_{N}), \;\;  ( j_{2, \ast} ([q_{1}+ \bar{q}_{2}] + 1_{N}). 
$$
This means there exists $u \in \mathbb{GL}_{\bar{N}}(\Lambda^{'})$ which conjugates these two idempotents.
\par
Theorem 34 says that there exits the idempotent $p \in \mathbb{I}demp_{2 \bar{N}}$ such that
$$
i_{1 \ast} p = (p_{1} + \bar{p}_{2} + 1_{N}) \oplus 1_{\bar{N}}
$$
and 
$$
i_{2 \ast} p = U \; ( (q_{1} + \bar{q}_{2} + 1_{N}) \oplus 1_{\bar{N}} ) \; U^{1}
$$
where 
$$
U \in \mathbb{GL}_{2 \bar{N}}(\Lambda^{'}_{\mu}).
$$
This means the image of the $K$-theory class of $p - 1_{\bar{N} }$ through the pair of homomorphisms $(i_{1, \ast}, i_{2, \ast})$ is
$$
(i_{1, \ast}, i_{2, \ast}) [p - 1_{\bar{N}}] =   
$$
$$
= (\;  (p_{1} + \bar{p}_{2} + 1_{N}) \oplus 1_{\bar{N}} - 1_{\bar{N}} ,  
U \; ( (q_{1} + \bar{q}_{2} + 1_{N}) \oplus 1_{\bar{N}} ) \; U^{1} - 1_{\bar{N}}
\; ) = 
$$

$$
= (\;  (p_{1} + \bar{p}_{2} + 1_{N}) \oplus 1_{\bar{N}} - 1_{\bar{N}} ,  
U \; ( (q_{1} + \bar{q}_{2} + 1_{N}) \oplus 1_{\bar{N}}  - 1_{\bar{N}}  ) \;  \; U^{1} 
\;  = 
$$
$$
= ([p_{1} - p_{2}], [q_{1} - q_{2}],
$$
which completes the proof of the assertion.

\par

{\bf -ii)}  The only problem might occur in verifying that $Ker (j_{1 \ast} - j_{2 \ast}) \subset  Im  (i_{1 \ast}, i_{2 \ast}).$ The proof of it is provided by Theorem 36.  
The remaining verifications are obvious. 
\par
{\bf -iii)} 
\par
{\bf -iii.1)}  
Let $u \in \mathbb{GL}(\mathit{A}_{\mu})$.
Lemma 38 says that $\mathcal{O}(u) \otimes \mathbb{Z}[\frac{1}{2}]$ lifts as an invertible element over $\Lambda_{2, \mu}$.  Remark 48 completes the argument stating that $\partial u = 0 \in K_{0}(\Lambda_{\mu} \otimes \mathbb{Z}[\frac{1}{2}]$. This result takes care 
of the elements belonging to $\mathbb{O}(\Lambda^{'}_{\mu})$.

\par
We verify that  $\partial \circ  j_{1 \ast} = 0$. Indeed, 
let $ \tilde{U} \in \mathbb{GL}_{n}(\Lambda_{1, \mu})$ and  $U = j_{1 \ast} \tilde{U}$. Then $\partial \circ j_{1, \ast} [\tilde{U}] =
\partial [U]$.  The elements $U$, $U^{-1}$ have the liftings $A = \tilde{U}$ and $B = \tilde{U}^{-1}$. Then the corresponding elements
$S_{0}$  and $S_{1}$ are equal to zero. Therefore  
\begin{gather}    
\partial [U] =  
[\begin{pmatrix}
(0,0) &  (0,0) \\
(0,0) &  (1, 1)
\end{pmatrix}]
-
[\begin{pmatrix}
(0,0) &  (0,0) \\
(0,0) &  (1, 1)
\end{pmatrix}] =
0.
\end{gather}   
\par
{\bf -iii.2)} $   (i_{1 \ast}, i_{2 \ast})  \circ \partial = 0$.
Let $[U] \in K_{1}^{loc}(\Lambda')$ and let $\partial [U] =  [\mathbb{P}_{U}]  - [(e_{2}, e_{2})]$, where $\mathbb{P}_{U}$ is given by
formula (89). Then
\begin{equation}   
i_{1, \ast} \partial [U]  =  [L e_{1} L^{-1}] - [ e_{2}] = [e_{1}] - [e_{2}] = 0. 
\end{equation}  
On the other side,
\begin{equation}   
i_{2, \ast} \partial [U]  =  [e_{2}] - [ e_{2}] = 0. 
\end{equation}   
\par

{\bf -iii.3)} Suppose $[u] \in K_{1}^{loc}(\Lambda^{'})$ and $\partial [u] = 0$. We have to prove that 
there exist invertible matrices $U_{1} \in \mathbb{GL}(\Lambda_{1, \mu})$ and 
$U_{1} \in \mathbb{GL}(\Lambda_{1, \mu})$
such that 
\begin{equation}   
[u] = (j_{1, \ast} - j_{2, \ast}) 
(U_{1}, U_{2}) =  j_{1, \ast}  (U_{1})  - j_{2, \ast} (U_{2}) 
\end{equation}   
\par
$\partial [u] = 0$ means (after stabilisation) that $\mathbb{P}_{u}$ is l-conjugate to $[ ( e_{2}, e_{2} ) ] $, i.e. there exists
\begin{gather}   
U = (U_{1}, U_{2})  \in \mathbb{GL}_{n}(\Lambda_{1, \mu}) \otimes \mathbb{GL}_{n}(\Lambda_{2, \mu}) ,
\end{gather}
which, as components of an invertible matrix over $\Lambda_{\mu}$, satisfy the compatibility condition
\begin{equation}  
j_{1} (U_{1})  = j_{2} (U_{2})
\end{equation}
 such that
\begin{gather}  
\mathbb{P}_{U} :=    
\begin{pmatrix}
(S_{0}^{2}, \; 0)  &   (S_{0} (1 + S_{0}) B, \; 0)  \\
(S_{1}A, \; 0)   &  (1 - S_{1}^{2}, \; 1 )
\end{pmatrix}
= U .(e_{2}, e_{2}).  U^{-1}.
\end{gather}  
In other words, 
\begin{equation}  
\partial u =   [ U . (e_{2}, e_{2}).  U^{-1}] - [ ( e_{2}, e_{2} ) ] = 
 [ U_{1} .e_{2} .U_{1}^{-1}  \;,  \; U_{2} e_{2} U_{2}^{-1}  )]  - [ ( e_{2}, e_{2} ) ].
\end{equation}   
We assume $\Lambda_{1, \mu} = \Lambda_{2, \mu} $.
The class $  [ U .(e_{2}, e_{2}). U^{-1} ] \in \mathbb{I}demp(\Lambda_{\mu})$ remains unchanged by further conjugating it by the double invertible matrix
\begin{equation}   
\tilde{U} = (U_{1}^{-1}, U_{1}^{-1}) \in \mathbb{GL} (\Lambda_{\mu}).
\end{equation}  
Denote by $Inn (A)$ the inner auto-mporhism associated with the invertible element $A$. Noting that the inner auto-morphisms satisfy
\begin{equation}   
Inn (A) \circ Inn (B) = Inn (A.B),
\end{equation} 
we have, after recalling Theorem 27. -i)
\begin{gather*}
\partial u = 
 [ U_{1} .e_{2} .U_{1}^{-1}  \;,  \; U_{2} e_{2} U_{2}^{-1}  )]  - [ ( e_{2}, e_{2} ) ] = \\
 = [ \tilde{U} (U_{1} .e_{2} .U_{1}^{-1}  \;,  \; U_{2} e_{2} U_{2}^{-1}  ) \tilde{U}^{-1}]  - [ ( e_{2}, e_{2} ) ] =
 \end{gather*}
\begin{equation}   
 = [ U_{1}^{-1 }(U_{1} .e_{2} .U_{1}^{-1} U_{1} \;,  \; U_{1}^{-1 }U_{2} e_{2} U_{2}^{-1} U_{1} ) \tilde{U}^{-1}]  - [ ( e_{2}, e_{2} ) ] =
 p(1_{N}, 1_{N},  U_{1}^{-1} U_{2}).
\end{equation}   
Note that $ U_{1}^{-1} U_{2} \in \mathbb{GL}_{N}(\Lambda_{1, \mu})= \mathbb{GL}_{N}(\Lambda_{2, \mu})$. This relation proves the assertion -iii.3).


-{\bf iii.4})  Let $[p]  = ([p_{1}] - [p_{2}])  \in K_{0} (\Lambda_{\mu}) $ ($p_{1} \in \mathit{Idemp}_{m}(\Lambda_{\mu})$ and $p_{2} \in \mathit{Idemp}_{n}(\Lambda_{\mu})$)
and suppose $(i_{1 \ast}, i_{2 \ast}) [p]  = 0$. 
We have to show that $[p_{1}] - [p_{2}] = \partial [U]$, with $ U \in \mathbb{GL} (\Lambda'_{\mu})$.
\par
We may assume that the idempotent $p_{2}$ is trivial; indeed, the matrix
$(1- p_{2}, 1 - p_{2} )$, seen as a matrix in $\mathbb{M}_{n} (\Lambda_{\mu})$, is an idempotent over $\Lambda_{\mu}$ 
 (here we have assumed that $\Lambda_{1, \mu} = \Lambda_{2, \mu}$) and therefore
 $[p_{1}] - [p_{2}] = [p_{1} \oplus (1 - p_{2}) ] - [p_{2} \oplus ( 1 - p_{2})]$. 
 Here 
 \begin{gather}   
 \begin{align}
 p_{1} \oplus (1 - p_{2}) &:= \bar{p}_{1} \in \mathit{Idemp}_{m+n} (\Lambda_{\mu})  \; \text{and}\\
 p_{2} \oplus ( 1 - p_{2}) &:= \bar{p}_{2} \in Idemp_{2n}(\Lambda_{\mu}) \text{ is  a trivial idempotent.}
 \end{align}
 \end{gather}   
 \par
 Denote   
 \begin{gather}   
 e_{(k,l)} :=
 \begin{pmatrix}
 0 & 0 \\
 0 & 1_{l}
 \end{pmatrix}
 \in \mathbb{M}_{k+l} (\mathbb{C}).
 \end{gather}   
 \par
 Let
 \begin{gather}    
 \begin{align}
(i_{1 \ast}, i_{2\ast})\;  \bar{p}_{1} & := (\bar{p}_{11}, \bar{p}_{12}),   \;\;  \bar{p}_{1k} \in \mathbb{M}_{m+n}(\Lambda_{k, \mu}), \hspace{0.2cm} k = 1, 2,
   with \;\; j_{1, \ast} \bar{p}_{11}  - j_{2, \ast} \bar{p}_{12} = 0, \\
(i_{1\ast}, i_{2\ast})  \; \bar{p}_{2} &:= (e_{(0,2n)}, e_{(0,2n)}),  \;\;  e_{(0,2n)} \in \mathbb{M}_{2n}(\Lambda_{k, \mu}). 
\end{align}
\end{gather}   

\par
By hypothesis we know also that
\begin{gather}  
 [ \bar{p}_{11}] - [e_{(0,2n)}] = 0 \in K_{0}(\Lambda_{1, \mu}) \\  
 [ \bar{p}_{12}] - [e_{(0,2n)}]  = 0 \in K_{0}(\Lambda_{2, \mu}).   
\end{gather}   
Equations (131),  (132) tell that after possible stabilisations (let $\tilde{p}_{11} \in \mathbb{I}demp_{2n+q} (\Lambda_{1, \mu})$, resp.
$\tilde{p}_{12} \in \mathbb{I}demp_{2n+q} (\Lambda_{2, \mu})$ and $e_{(q, 2n)}$ be the stabilised idempotents (with the same $q$, sufficiently large), there exist invertible elements
$u_{1} \in \mathbb{GL}_{q+2n}(\Lambda_{1, \mu} )$,      $u_{2} \in \mathbb{GL}_{q+2n}(\Lambda_{2, \mu})$,      such that

\begin{gather}  
\tilde{p}_{11} = u_{1} \; \tilde{p}_{21} \; u_{1}^{-1} = u_{1} \; e_{(q,2n)} \; u_{1}^{-1} \\
\tilde{p}_{12} = u_{2} \; \tilde{p}_{22} \; u_{2}^{-1} = u_{2} \; e_{(q,2n)} \;  u_{2}^{-1}.
\end{gather}   
After such stabilisations, the new double matrix  $\tilde{p}_{1} := (\tilde{p}_{11} , \tilde{p}_{12} )$ is still an idempotent in
$\mathbb{M}_{q+2n}(\Lambda_{\mu})$. 
\par
The isomorphisms $u_{1}, u_{2}$ tell that the pair  $\tilde{p}_{1} := (\tilde{p}_{11} , \tilde{p}_{12} )$ is $(\Lambda_{1, \mu}, \Lambda_{2, \mu})$-isomorphic to the pair of trivial idempotents $(e_{(q,2n)}, e_{(q,2n)})$. Notice, however, that in general,  $j_{1 \ast} u_{1} \neq j_{2 \ast} u_{2}$; this is why this isomorphism is not necessarily a  $\Lambda_{ \mu}$-isomorphism.
\par
We may further simplify the description of the idempotent $\tilde{p}_{1} \in \mathbb{I}demp(\Lambda_{\mu})$. 
To do this we replace the idempotent $\tilde{p}_{1}$ by the idempotent $\tilde{\tilde{p}}_{1} :=  (\tilde{\tilde{p}}_{11} , \tilde{\tilde{p}}_{12} ) :=
  U \tilde{p}_{1} U^{-1} $, where $U$ is the double matrix 
\begin{equation}      
\tilde{U} := (u_{2}^{-1}, u_{2}^{-1}) \in \mathbb{GL}_{q+2n} (\Lambda_{\mu}).
\end{equation}   
After this modification  
\begin{gather}  
\begin{align}
\tilde{\tilde{p}}_{11} =&   u_{2}^{-1} \; \tilde{p}_{11} \; u_{2} =   (u_{2}^{-1} \; u_{1}) \; e_{(q,2n)}
 \; (u_{2}^{-1} \; u_{1})^{-1} \\
\tilde{\tilde{p}}_{12} =& u_{2}^{-1} \;  \tilde{p}_{12} \; u_{2} = u_{2}^{-1}    (u_{2} \; e_{(q,2n)}
\; u_{2}^{-1})      u_{2} = e_{(q,2n)}.
\end{align}
\end{gather} 
\par
As a result of all these transformations, the element $[p]$ is stably $\Lambda_{\mu}$-isomorphic to 
\begin{equation}   
[p] = [p_{1}] - [p_{2}] = [( u_{2}^{-1} \; u_{1}) \; e_{(q,2n)} \; (u_{2}^{-1} \; u_{1})^{-1}, \; e_{(q,2n)}] - [e_{(q,2n)}, \; e_{(q,2n)}].
\end{equation}   
From the equations (133), (134) we get

\begin{gather}  
j_{1, \ast } {\tilde{\tilde{p}}}_{11} =   j_{1, \ast } (\;  (u_{2}^{-1} \; u_{1}) \; e_{(q,2n)} \; (u_{2}^{-1} \; u_{1})^{-1} \;) = 
j_{2, \ast } {\tilde{\tilde{p}}}_{12}.
\end{gather} 

\par
This shows that 
\begin{equation}  
\phi := j_{1 \ast} (u_{2}^{-1} \; u_{1}) \in \mathbb{GL}(\Lambda'_{1, \mu})
\end{equation}  
establishes and isomorphism 
between $j_{1 \ast}\tilde{\tilde{p}}_{11}$ and $j_{1 \ast}e_{(q,2n)} = e_{(q,2n)}$.

\par     
Recall (see \S 10.2) that the construction of $\partial (U)$ involves the lifting $A, \; resp. \; B$, of $U$, resp.  $U^{-1}$, in 
$\mathbb{M}_{2n}(\Lambda_{1, \mu})$. We look for $A, B \in \mathbb{M}_{2n}(\Lambda_{1, \mu})$ such that
$j_{1, \ast} A = U$  and $j_{1, \ast} B = U^{-1}$.
\par
We may choose the liftings
\begin{gather}   
A = u _{1} \; u_{2}^{-1} \in \mathbb{GL}_{q+2n}(\Lambda_{1, \mu}) \\
B =  u _{2} \; u_{1}^{-1} \in \mathbb{GL}_{q+2n}(\Lambda_{1, \mu}).
\end{gather}  
From this we get $S_{0}= 0$ and  $S_{1}= 0$.  
\par
We get further
\begin{gather}  
\mathit{P}
= 
\begin{pmatrix}
S_{0}^{2} & S_{0}(1 + S_{0})B \\
 S_{1} A & 1 - S_{1}^{2}
\end{pmatrix} =
\begin{pmatrix}
0& 0 \\
0 & 1 
\end{pmatrix}
\in \mathbb{M}_{2(m+n+q)}(\Lambda_{\mu}).
\end{gather}   
This shows that   
\begin{equation}
\partial U = [\tilde{p}_{11}] -  [\tilde{p}_{22}] = 
( [\tilde{p}_{11}] - [e_{0,2n}] ) 
 - ( [\tilde{p}_{22}]  - [e_{0,2n}] )  = [p_{1}] - [p_{2}].
\end{equation}
These complete the proof of Theorem 51.
\end{proof}   


\newpage

 \end{document}